\numberwithin{equation}{section}
\theoremstyle{definition}
\newtheorem{Def}{Definition}[section]
\theoremstyle{remark}
\newtheorem{Exa}[Def]{Example}
\newtheorem{Rem}[Def]{Remark}
\theoremstyle{plain}
\newtheorem{Prop}[Def]{Proposition}
\newtheorem{Cor}[Def]{Corollary}
\newtheorem{Thm}[Def]{Theorem}
\newtheorem{Lem}[Def]{Lemma}
\newtheorem{MainThm}{Theorem}
\newcommand{\dfn}{\mathrel{\dot{=}}}
\newcommand{\st}{ \ ; \ }
\newcommand{\rarr}{\rightarrow}
\newcommand{\sset}{\subset}
\newcommand{\eset}{\emptyset}
\newcommand{\Z}{\mathbb{Z}}
\newcommand{\N}{\mathbb{N}}
\newcommand{\R}{\mathbb{R}}
\newcommand{\Q}{\mathbb{Q}}
\newcommand{\C}{\mathbb{C}}
\newcommand{\TR}[5]{\begin{array}{c c c c c}
    {#1} & : & {#3} & \longrightarrow & {#5}\\
    & & {#2} & \longmapsto & {#4}
  \end{array}
}
\DeclareMathOperator{\Span}{\mathrm{span}}
\DeclareMathOperator{\ran}{\mathrm{ran}}
\newcommand{\del}{\partial}
\newcommand{\dd}{\mathrm{d}}
\newcommand{\D}{\mathscr{D}}
\newcommand{\cinfty}{\mathscr{C}^\infty}
\newcommand{\MM}{\mathrm{M}}
\DeclareMathOperator{\lie}{\mathrm{Lie}}
\newcommand{\hsum}[1]{\underset{#1}{\widehat{\bigoplus}}}
\DeclareMathOperator{\GL}{\mathrm{GL}}
\newcommand{\TT}{\mathbb{T}}
\newcommand{\gr}[1]{\mathfrak{#1}}
\DeclareMathOperator{\ad}{\mathrm{ad}}
\newcommand{\vv}[1]{\mathrm{#1}}
\author{Gabriel Ara\'{u}jo}
\address{Universidade de S{\~a}o Paulo, ICMC-USP, S{\~a}o Carlos, SP, Brazil}
\email{\texttt{gccsa@icmc.usp.br}}
\author{Igor A.~Ferra}
\address{Universidade Federal do ABC, CMCC-UFABC, S{\~a}o Bernardo do Campo, SP, Brazil}
\email{\texttt{ferra.igor@ufabc.edu.br}}
\author{Luis F.~Ragognette}
\address{Universidade Federal de S{\~a}o Carlos, DM-UFSCar, S{\~a}o Carlos, SP, Brazil}
\email{\texttt{luisragognette@dm.ufscar.br}}
\thanks{This work was supported by the S{\~a}o Paulo Research Foundation (FAPESP, grants~2016/13620-5 and~2018/12273-5).}
\keywords{Sums of squares, global hypoellipticity, invariant operators.}
\subjclass[2020]{35H10 (primary), 35R01, 35R03 (secondary)}
\title[]{Global hypoellipticity of sums of squares on compact manifolds}
\begin{document}

\begin{abstract} In this work, we present necessary and sufficient conditions for an operator of the type sum of squares to be globally hypoelliptic on a product of compact Riemannian manifolds $T \times G$, where $G$ is also a Lie group.
These new conditions involve the global hypoellipticity of a system of vector fields and are weaker than  H\"ormander's condition, at the same time that they generalize the well known Diophantine conditions on the torus. We were also able to provide examples of operators satisfying these conditions in the general setting.
\end{abstract}

\maketitle


\section*{Introduction}

It is well known that H\"{o}rmander's bracket condition~\cite{hormander67} does not characterize global hypoellipticity for operators of the type sum of squares~\cite{hp00}. Several works investigate global hypoellipticity of such operators on the $N$-dimensional torus, $\mathbb{T}^N$, see for instance~\cite{him95, hp00, hps06} and the references therein, while results about Gevrey or real-analytic regularity can be found in~\cite{chim94, chim98, albanese11}. The latter question was also studied on more general compact manifolds~\cite{christ94, bccp}.

When dealing with an operator $P$ of tube type on a torus, that is an operator defined on a product $\mathbb T_t^n\times \mathbb T_x^m$  whose coefficients depend only on the $t$ variable, there are conditions (weaker than H\"{o}rmander's) involving Diophantine properties about the coefficients of $P$ that completely characterize its global hypoellipticity~\cite{bfp17}. Such number-theoretic conditions naturally arise when one approaches this problem using partial Fourier series on the $x$ variable.

In this work we study global hypoellipticity of operators defined on the product of a compact Riemannian manifold with a compact Lie group. More precisely, let $T$ be a compact, connected and orientable smooth manifold and $G$ be a compact and connected Lie group. Our main result concerns the global hypoellipticity of operators on $T\times G$ of the following kind:
\begin{align*}
  P &\dfn \Delta_T - \sum_{\ell=1}^N \left( \sum_{j=1}^m  a_{\ell j}(t) \vv{X}_j+ \vv{W}_\ell\right)^2
\end{align*}
where $\Delta_T$ is the Laplace-Beltrami operator on $T$ associated to a given Riemannian metric, $\vv{W}_1,\ldots \vv{W}_N$ are skew-symmetric, real, smooth vector fields on $T$, while $a_{\ell j}\in \cinfty(T; \R)$ for every $\ell \in \{1, \ldots, N\}$ and $j \in \{1, \ldots, m\}$, and $\vv{X}_1, \ldots, \vv{X}_m$ is a basis of real left-invariant vector fields on $G$. 

The novelty here is that, in this more general setting, we must replace the Diophantine condition that appeared naturally on tori by the global hypoellipticity of a certain system of vector fields on $G$. We proved that this condition is necessary for global hypoellipticity of $P$, see Theorem~\ref{thm:PGHnecessaell} and Proposition~\ref{Pro:66impliesLGH}; and, under an additional hypothesis, Theorem~\ref{thm:thm15} says that this condition is also sufficient. We stress that when $G=\mathbb T^m$ this additional hypothesis is always satisfied by our operator.

In particular we can state our main result on $T\times \mathbb{T}^m$ as follows:
\begin{MainThm}\label{Thm:Toruscase}
  Let $T$ be a compact manifold as above and consider the LPDO on $T\times \mathbb{T}^m$ defined by
  \begin{align*}
    P &\dfn \Delta_T - \sum_{\ell=1}^N \left( \sum_{j=1}^m  a_{\ell j}(t) \frac{\del}{\del x_j} + \vv{W}_\ell \right)^2.
  \end{align*}
  Then $P$ is globally hypoelliptic in $T\times \mathbb{T}^m$ if and only if the system of vector fields with constant coefficients
  \begin{align*}
    \mathcal{L} &\dfn \left \{ \vv{L} \in \mathfrak{g} \st \vv{L} = \sum_{j=1}^m  a_{\ell j}(t) \frac{\partial}{\partial x_j} \ \text{ for some $\ell \in \{1, \ldots, N\}$ and some $t\in T$} \right \}
  \end{align*}
  is globally hypoelliptic in $\mathbb{T}^m$.
\end{MainThm}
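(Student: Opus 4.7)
The plan is to deduce Theorem~\ref{Thm:Toruscase} directly from the more general results announced in the Introduction by specializing to the case $G = \mathbb{T}^m$. Since the hard analytic content is already encapsulated in Theorem~\ref{thm:PGHnecessaell}, Proposition~\ref{Pro:66impliesLGH}, and Theorem~\ref{thm:thm15}, the real work here is identifying, in the abelian setting, the system of vector fields and the additional hypothesis that appear in those general statements with the explicit objects in the statement.

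For the necessity direction, assume $P$ is globally hypoelliptic on $T \times \mathbb{T}^m$. Applying Theorem~\ref{thm:PGHnecessaell} together with Proposition~\ref{Pro:66impliesLGH}, one obtains the global hypoellipticity on $G$ of the system of left-invariant vector fields canonically associated with the coefficients $a_{\ell j}(t)$. For $G = \mathbb{T}^m$ the identification of $\mathfrak{g}$ with $\mathbb{R}^m$ via $\sum_j c_j \partial/\partial x_j$ is the usual one, so the abstract system produced by the general theorem is exactly the set $\mathcal{L}$ defined in the statement, and we conclude that $\mathcal{L}$ is globally hypoelliptic on $\mathbb{T}^m$.

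For the sufficiency direction, I would invoke Theorem~\ref{thm:thm15}, which, under its additional hypothesis, recovers the global hypoellipticity of $P$ from that of the corresponding system on $G$. The key observation, already highlighted in the Introduction, is that in the abelian case $G = \mathbb{T}^m$ the extra hypothesis is automatic: all the brackets $[\vv{X}_i, \vv{X}_j]$ vanish, so any term in that hypothesis which measures noncommutativity of the left-invariant fields trivializes. Granting global hypoellipticity of $\mathcal{L}$, the same identification as above shows that the general system appearing in Theorem~\ref{thm:thm15} is once more precisely $\mathcal{L}$, and the theorem yields global hypoellipticity of $P$.

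The main obstacle is a translation step rather than an analytic one: I must open up the precise statements of the three general results, read off the form taken by the associated system of vector fields and by the additional hypothesis when $\mathfrak{g}$ is abelian, and verify that both reduce to the single condition on $\mathcal{L}$ stated in Theorem~\ref{Thm:Toruscase}. Once this bookkeeping is done, the biconditional follows immediately from the cited results.
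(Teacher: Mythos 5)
Your proposal matches the paper's own derivation: the authors obtain Theorem~\ref{Thm:Toruscase} precisely by combining Theorem~\ref{thm:PGHnecessaell} with Proposition~\ref{Pro:66impliesLGH} for necessity, and Theorem~\ref{thm:thm15} for sufficiency, noting that on $G = \TT^m$ the commutativity hypothesis~\eqref{it:thm15_hyp1} is automatic since $\gr{g}$ is abelian (and that $\tilde{P} = \Delta_T - \sum_\ell \vv{W}_\ell^2$ is automatically elliptic, with the $\vv{W}_\ell$ skew-symmetric as required). This is the same route, with only the minor bookkeeping you already identify left to carry out.
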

Throughout this work, $\mathfrak g$ denotes the Lie algebra of $G$. When $G = \mathbb{T}^m$, $\mathfrak{g}$ is the space of $\R$-linear combinations of $\frac{\partial}{\partial x_1}, \ldots, \frac{\partial}{\partial x_m}$. We recall that the system $\mathcal{L}$ is globally hypoelliptic in $\mathbb{T}^m$ if every distribution $u$ in $\mathbb{T}^m$ satisfying $\vv{L} u\in\cinfty(\mathbb{T}^m)$ for every $\vv{L}\in \mathcal{L}$ is already smooth.

It turns out that this condition about $\mathcal L$ is equivalent to the Diophantine condition presented in~\cite{bfp17} (see Section~\ref{sec:nsa_vectors} for more details), thus our result above generalizes~\cite[Theorem~1.5]{bfp17}. However, stated as such, our new condition is much easier to check than the number-theoretic one in many practical situations: an immediate application is a generalization of~\cite[Theorem~3]{albanese11} (Example~\ref{exa:lines}). Our techniques also allowed us to prove broader versions of~\cite[Theorem~1.9]{bfp17} (Theorem~\ref{thm:thm19}) and of~\cite[Theorem~1]{albanese11} (Theorem~\ref{thm:albanese}). 

Section~\ref{sec:partial_FPM} is devoted to develop the machinery -- a suitable substitute to partial Fourier series -- that was used throughout the other sections. Although most of the results here are rather expected, we decided to keep some of their proofs (or sketches) in the text as we did not find some of them in the literature.
 
H\"{o}rmander's condition will be explored in Section~\ref{sec:hormander}. On one hand, a finite type condition at a single point implies that the system $\mathcal{L}$ is globally hypoelliptic (Corollary~\ref{cor:hormander_point}). On the other hand, Example~\ref{exa:lines} with a convenient choice of coefficients yields an operator that is globally hypoelliptic while the finite type condition fails to be true everywhere.

We would like to point out that our hypotheses in Theorem~\ref{thm:thm15} were carefully chosen in order to allow us to provide examples where $G$ is not the $m$-dimensional torus as we show, in Section~\ref{sec:orbits}, that slightly stronger assumptions would force $G$ to be Abelian.

\section{Preliminaries} \label{sec:preliminaries}

Let $M$ be a compact, connected, smooth manifold, which for simplicity we further require to be orientable and in fact oriented. We endow it with a Riemannian metric, and we denote by $\dd V$ either its underlying volume form or the Radon measure induced by it on $M$. The $L^2$ norms below are always taken with respect to this measure, which we assume w.l.o.g.~to be normalized (i.e.~$M$ has total mass equal to $1$). For each $x \in M$ we denote by $\langle \cdot, \cdot \rangle_{T_x M}$ the inner product on $T_x M$ induced by our metric, which by means of the Riesz isomorphism
\begin{align*}
  \flat_x: v \in T_x M &\longmapsto \langle \cdot, v \rangle_{T_x M} \in T_x^* M 
\end{align*}
induces an inner product $\langle \cdot, \cdot \rangle_{T_x^* M}$ on $T_x^* M$, yielding a smooth metric on $T^* M$. This in turn produces an inner product on $\cinfty(M; T^* M)$: if $u, v$ are two smooth $1$-forms on $M$ we define
\begin{align}
  \langle u, v \rangle_{L^2(M)} &\dfn \int_M \langle u_x, v_x \rangle_{T_x^* M} \ \dd V(x). \label{eq:inner_prod_1forms}
\end{align}

Let $\dd:  \cinfty(M; \R) \rarr \cinfty(M; T^* M)$ be the exterior derivative and $\dd^*: \cinfty(M; T^* M) \rarr \cinfty(M; \R)$ its formal adjoint w.r.t.~\eqref{eq:inner_prod_1forms}. Both of them are first-order differential operators, and the Laplace-Beltrami operator is then defined as the second-order differential operator 
\begin{align*}
  \Delta \dfn \dd^* \dd : \cinfty(M; \R) \longrightarrow  \cinfty(M; \R).
\end{align*}
Everything above can be complexified by allowing all the objects involved to take values in $\C$.

Let us recall the main properties of $\Delta$ which will be of fundamental importance to us. It is an elliptic operator, and clearly positive semidefinite i.e.~$\langle \Delta f, f \rangle_{L^2(M)} \geq 0$ for all $f \in \cinfty(M)$. We denote by $\sigma(\Delta) \sset \R_+$ its spectrum i.e.~the set of all eigenvalues of $\Delta$: this set is countably infinite, and for each $\lambda \in \sigma(\Delta)$ we denote by
\begin{align*}
  E_\lambda &\dfn \ker (\Delta - \lambda I)
\end{align*}
the eigenspace associated with $\lambda$, which is a finite dimensional vector space containing smooth functions only. These eigenspaces are pairwise orthogonal in $L^2(M)$, and $E_0$ is precisely the space of constant functions since $M$ is connected. The Spectral Theorem tells us that if we endow each $E_\lambda$ with the $L^2$ inner product then
\begin{align}
  L^2(M) &\cong \hsum{\lambda \in \sigma(\Delta)} E_\lambda \label{eq:dec_delta}
\end{align}
as Hilbert spaces. Moreover, the following consequence of Weyl's asymptotic formula~\cite[p.~155]{chavel_eigenvalues} holds
\begin{align}
  \sum_{\lambda \in \sigma(\Delta) \setminus 0} (\dim E_\lambda) \lambda^{-2m} &< \infty \label{eq:weyl}
\end{align}
where $m \dfn \dim M$: indeed, writing the eigenvalues of $\Delta$ -- repeated according to their multiplicities -- as a non-decreasing sequence $\{ \lambda_\nu \}_{\nu \in \N}$ one has $(\dim E_{\lambda_\nu}) \lambda_\nu^{-2m} = \mathrm{O}(\nu^{-3})$.

Let us recall in detail the meaning of~\eqref{eq:dec_delta}. By introducing the space of sequences
\begin{align*}
  \Pi(\Delta) &\dfn \prod_{\lambda \in \sigma(\Delta)} E_\lambda
\end{align*}
we have, by definition,
\begin{align*}
  \hsum{\lambda \in \sigma(\Delta)} E_\lambda &\dfn \left\{ u \in \Pi(\Delta) \st \text{ $ \left( \| u(\lambda) \|_{L^2(M)}^2 \right)_{\lambda \in \sigma(\Delta)}$ is summable in $\R$} \right\}
\end{align*}
which becomes a Hilbert space when endowed with the inner product
\begin{align*}
  \langle u, v \rangle_{L^2(\Delta)} &\dfn \sum_{\lambda \in \sigma(\Delta)} \langle u(\lambda), v(\lambda) \rangle_{L^2(M)}.
\end{align*}
For simplicity we denote this Hilbert space by $L^2(\Delta)$. If for each $\lambda \in \sigma(\Delta)$ we denote by $\mathcal{F}_\lambda: L^2(M) \rarr E_\lambda$ the corresponding orthogonal projection then every $f \in L^2(M)$ can be written as
\begin{align*}
  f &= \sum_{\lambda \in \sigma(\Delta)} \mathcal{F}_\lambda(f) 
\end{align*}
where convergence takes place in $L^2(M)$. We then assemble the linear map
\begin{align}
  \TR{\mathcal{F}}{f}{L^2(M)}{ \left( \mathcal{F}_\lambda(f) \right)_{\lambda \in \sigma(\Delta)}}{\Pi(\Delta)} \label{eq:Fourier_proj_map}
\end{align}
so~\eqref{eq:dec_delta} means that $\mathcal{F}$ is an isometric isomorphism from $L^2(M)$ onto $L^2(\Delta)$. In the same spirit, we may use the projection map $\mathcal{F}$ to identify many spaces of (generalized) functions on $M$ by analyzing the growth of their corresponding sequences in $\Pi(\Delta)$, in a Paley-Wiener-like fashion.

The space $\cinfty(M)$ of all complex-valued smooth functions on $M$ is naturally endowed with a locally convex topology (uniform convergence of all derivatives on compact coordinate sets). As our volume form $\dd V$ allows us to identify the space of all smooth densities on $M$ with $\cinfty(M)$, by the same token we may identify the topological dual of the latter with $\D'(M)$, the space of Schwartz distributions on $M$. The measure $\dd V$ further allows us to embed all the classical spaces of functions in $\D'(M)$: we interpret each $f \in L^1(M)$ as a distribution on $M$ by letting it act on a test function $\phi \in \cinfty(M)$ as
\begin{align*}
  \langle f, \phi \rangle &\dfn \int_M f \phi \ \dd V. 
\end{align*}
In that sense, $\Delta$ acts on distributions (recall this is a real operator) as follows: if $f \in \D'(M)$ then $\langle \Delta f, \phi \rangle = \langle f, \Delta \phi \rangle$ for every test function $\phi \in \cinfty(M)$. Moreover, for each $\lambda \in \sigma(\Delta)$ we have $f|_{E_\lambda} \in E_\lambda^*$, and we denote by $\mathcal{F}_\lambda (f)$ the unique element in $E_\lambda$ that satisfies
\begin{align*}
  \langle \mathcal{F}_\lambda (f), \phi \rangle_{L^2(M)} &= \langle f, \overline{\phi} \rangle, \quad \forall \phi \in E_\lambda.
\end{align*}
Concretely, if $\{ \phi^\lambda_i \st 1 \leq i \leq \dim E_\lambda \}$ is an orthonormal basis for $E_\lambda$ then for $f \in \D'(M)$ we have
\begin{align*}
  \mathcal{F}_\lambda (f) = \sum_{i = 1}^{d_\lambda} \langle \mathcal{F}_\lambda (f), \phi^\lambda_i \rangle_{L^2(M)} \ \phi^\lambda_i = \sum_{i = 1}^{d_\lambda} \langle f, \overline{\phi^\lambda_i} \rangle \ \phi^\lambda_i,
\end{align*}
where $d_\lambda \dfn \dim E_\lambda$, which coincides with the original definition of $\mathcal{F}_\lambda(f)$ when $f \in \cinfty(M)$. We have thus defined a linear map
\begin{align*}
  \TR{\mathcal{F}}{f}{\D'(M)}{ \left( \mathcal{F}_\lambda(f) \right)_{\lambda \in \sigma(\Delta)}}{\Pi(\Delta)}
\end{align*}
that naturally extends~\eqref{eq:Fourier_proj_map}. One then easily proves that:
\begin{Prop} \label{prop:charac_smoothness_fourier_proj} For $a \in \Pi(\Delta)$ the following characterizations hold:
  \begin{enumerate}
  \item $a = \mathcal{F}(f)$ for some $f \in \cinfty(M)$ if and only if for every $s > 0$ there exists $C > 0$ such that
    \begin{align*}
      \| a(\lambda) \|_{L^2(M)} &\leq C (1 + \lambda)^{-s}, \quad \forall \lambda \in \sigma(\Delta).
    \end{align*}
  \item $a = \mathcal{F}(f)$ for some $f \in \D'(M)$ if and only if there exist $C, s > 0$ such that
    \begin{align*}
      \| a(\lambda) \|_{L^2(M)} &\leq C (1 + \lambda)^{s}, \quad \forall \lambda \in \sigma(\Delta).
    \end{align*}
  \end{enumerate}
\end{Prop}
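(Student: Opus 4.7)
The plan is to base both characterizations on the identity $\mathcal{F}_\lambda((I+\Delta)^N f) = (1+\lambda)^N \mathcal{F}_\lambda(f)$ together with Parseval's formula
\[
  \|g\|_{L^2(M)}^2 = \sum_{\lambda \in \sigma(\Delta)} \|\mathcal{F}_\lambda(g)\|_{L^2(M)}^2, \qquad g \in L^2(M),
\]
and on the fact (standard for the positive elliptic operator $I+\Delta$ on the compact manifold $M$) that $\cinfty(M) = \{f : (I+\Delta)^N f \in L^2(M) \text{ for every } N \geq 0\}$ and $\D'(M) = \{f : (I+\Delta)^{-N} f \in L^2(M) \text{ for some } N \geq 0\}$. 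A recurring ingredient will be the polynomial counting bound
\[
  \#\{\lambda \in \sigma(\Delta) : \lambda \leq R\} \leq C(1+R)^{\kappa},
\]
which is an easy consequence of~\eqref{eq:weyl}.

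\textbf{Part (1).} If $f \in \cinfty(M)$, then $(I+\Delta)^N f$ is smooth, hence in $L^2(M)$, and Parseval gives $(1+\lambda)^N \|\mathcal{F}_\lambda(f)\|_{L^2(M)} \leq \|(I+\Delta)^N f\|_{L^2(M)}$, which is the claimed decay. Conversely, given rapid decay of $\|a(\lambda)\|_{L^2(M)}$, the counting bound makes $\sum_\lambda (1+\lambda)^{2N} \|a(\lambda)\|_{L^2(M)}^2 < \infty$ for every $N$; thus $f \dfn \sum_\lambda a(\lambda)$ converges in $L^2(M)$ and satisfies $(I+\Delta)^N f \in L^2(M)$ for every $N \geq 0$, so $f \in \cinfty(M)$. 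The equality $\mathcal{F}(f)=a$ is immediate from the orthogonality of the $E_\lambda$'s.

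\textbf{Part (2).} For the forward direction, any $f \in \D'(M)$ belongs to $(I+\Delta)^{-N} L^2(M)$ for some $N \geq 0$; applying Parseval to $(I+\Delta)^{-N} f$ and using $\mathcal{F}_\lambda((I+\Delta)^{-N} f) = (1+\lambda)^{-N} \mathcal{F}_\lambda(f)$ yields $\|\mathcal{F}_\lambda(f)\|_{L^2(M)} \leq (1+\lambda)^{N} \|(I+\Delta)^{-N} f\|_{L^2(M)}$. For the converse, assume $\|a(\lambda)\|_{L^2(M)} \leq C(1+\lambda)^s$ and define
\[
  \langle f, \phi \rangle \dfn \sum_\lambda \langle a(\lambda), \overline{\mathcal{F}_\lambda(\phi)} \rangle_{L^2(M)}, \qquad \phi \in \cinfty(M).
\]
By part~(1) applied to $\phi$, for any $M > 0$ one has $\|\mathcal{F}_\lambda(\phi)\|_{L^2(M)} \leq (1+\lambda)^{-M} \|(I+\Delta)^M \phi\|_{L^2(M)}$; choosing $M$ large enough that $\sum_\lambda (1+\lambda)^{s-M} < \infty$ (possible by the counting bound), the Cauchy-Schwarz estimate applied termwise and summed gives $|\langle f, \phi \rangle| \leq C' \|(I+\Delta)^M \phi\|_{L^2(M)}$, which is a continuous seminorm on $\cinfty(M)$. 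Hence $f \in \D'(M)$, and testing against $\phi \in E_\mu$ together with the orthogonality of eigenspaces shows $\mathcal{F}_\mu(f) = a(\mu)$.

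\textbf{Main obstacle.} The only non-routine technical input is the absolute convergence of all the series involved; this rests entirely on the polynomial counting estimate extracted from~\eqref{eq:weyl}. Everything else amounts to formal manipulations on the Fourier side, together with the standard fact that the Sobolev scale generated by powers of $I+\Delta$ coincides with the usual one.
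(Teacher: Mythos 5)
Your proposal is correct: the argument via the $(I+\Delta)$-Sobolev scale, Parseval, and the polynomial counting bound extracted from~\eqref{eq:weyl} is exactly the standard Paley--Wiener-type argument that the paper itself does not spell out but delegates to the cited reference (\cite[Lemmas~4.1 and~4.3]{araujo19}), and the ``standard facts'' you invoke (that $\cinfty(M)$ and $\D'(M)$ are characterized by the seminorms $\| (I+\Delta)^{N} \cdot \|_{L^2(M)}$, resp.\ by lying in $(I+\Delta)^{N}L^2(M)$ for some $N$) are ones the paper also takes for granted later on. The only blemish is notational: you use $M$ both for the manifold and for the large exponent in Part~(2), which should be renamed.
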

See e.g.~\cite[Lemmas~4.1(1) and~4.3(1)]{araujo19}, and also~\cite[Section~5.1]{araujo19}.

\subsection{Orthogonal expansion of vector-valued distributions} \label{rem:ext_fourier_to_tensors}

Given a complex finite dimensional vector space $\mathscr{V}$, for each $\lambda \in \sigma(\Delta)$ we may extend the action of $\mathcal{F}_\lambda: \D'(M) \rarr E_\lambda$ to $\D'(M; \mathscr{V}) \cong \D'(M) \otimes \mathscr{V}$ in a natural way. Indeed, the map
\begin{align*}
  (f, v) \in \D'(M) \times \mathscr{V} &\longmapsto \mathcal{F}_\lambda(f) \otimes v \in E_\lambda \otimes \mathscr{V}
\end{align*}
-- concisely written $\mathcal{F}_\lambda \otimes \mathrm{id}_{\mathscr{V}}$ -- is surely bilinear, hence lifts to a linear map $\D'(M) \otimes \mathscr{V} \rarr E_\lambda \otimes \mathscr{V}$, which we again denote simply by $\mathcal{F}_\lambda$. Concretely, given a basis $\{ v_1, \ldots, v_d \}$ of $\mathscr{V}$ we can write $f \in \D'(M) \otimes \mathscr{V}$ in a unique fashion as
\begin{align*}
  f &= \sum_{i = 1}^d f_i \otimes v_i, \quad f_i \in \D'(M),
\end{align*}
so that $\mathcal{F}_\lambda(f) \in E_\lambda \otimes \mathscr{V}$ is just
\begin{align*}
  \mathcal{F}_\lambda(f) &= \sum_{i = 1}^d \mathcal{F}_\lambda (f_i) \otimes v_i.
\end{align*}
On the other hand, $\D'(M; \mathscr{V})$ is naturally identified with the topological dual of $\cinfty(M; \mathscr{V}^*)$: any $\phi \in \cinfty(M; \mathscr{V}^*) \cong \cinfty(M) \otimes \mathscr{V}^*$ can be uniquely written as
\begin{align*}
  \phi &= \sum_{i' = 1}^d \phi_{i'} \otimes v^*_{i'}, \quad \phi_{i'} \in \cinfty(M),
\end{align*}
where $\{ v^*_1, \ldots, v^*_d \}$ is the basis of $\mathscr{V}^*$ dual to $\{ v_1, \ldots, v_d \}$, so
\begin{align*}
  \langle f, \phi \rangle = \sum_{i, i' = 1}^d \langle f_i, \phi_{i'} \rangle \langle v^*_{i'}, v_i \rangle = \sum_{i = 1}^d \langle f_i, \phi_i \rangle.
\end{align*}
\subsection{Riemannian metrics on compact Lie groups} \label{sec:introLG}

Let $G$ be a compact and connected Lie group, whose dimension as a manifold we denote by $m$. For each $x \in G$ we denote by $L_x: G \rarr G$ the left translation by $x$, which is a diffeomorphism of $G$ onto itself. A vector field $\vv{X}$ on $G$ is said to be \emph{left-invariant} if $(L_x)_* \vv{X} = \vv{X}$ for every $x \in G$. One defines left-invariant differential forms, tensors, etc., analogously. We denote by $\gr{g}$ the Lie algebra of all \emph{real} vector fields on $G$ that are left-invariant: this is a finite dimensional vector space, canonically isomorphic to $T_e G$ -- where $e \in G$ stands for the identity element -- by means of the map
\begin{align}
  \vv{X} \in \gr{g} &\longmapsto \vv{X}|_e \in T_e G. \label{eq:iso_evale}
\end{align}

Any basis $\vv{X}_1, \ldots, \vv{X}_m \in \gr{g}$ forms a global frame for $TG$, and if $\chi_1, \ldots, \chi_m \in \gr{g}^*$ is the corresponding dual basis -- which we regard as left-invariant $1$-forms on $G$ -- they form a global frame for $T^*G$. In particular, $\chi \dfn \chi_1 \wedge \cdots \wedge \chi_m$ is a non-vanishing left-invariant top-degree form on $G$, and it is easy to check that any other such form must be a multiple of $\chi$: one often calls
\begin{align*}
  \dd V_G &\dfn \left( \int_G \chi \right)^{-1} \chi
\end{align*}
the \emph{Haar volume form} of $G$ associated with the orientation of $G$ given by the global frame $\vv{X}_1, \ldots, \vv{X}_m$. The Radon measure on $G$ induced by $\dd V_G$ is called the \emph{Haar measure} of $G$, and is the unique left-invariant regular Borel measure on $G$ with total mass equal to $1$.

Left-invariant Riemannian metrics on $G$ are in one-to-one correspondence with inner products on $\gr{g}$: any such inner product, which we regard as an inner product on $T_e G$ via~\eqref{eq:iso_evale}, can be pushed forward by $L_x$ to an inner product on $T_x G$ for every $x \in G$, thus producing the desired left-invariant Riemannian metric. Now if we fix an inner product $\langle \cdot, \cdot \rangle$ on $\gr{g}$ and, as above, select $\vv{X}_1, \ldots, \vv{X}_m$ an orthonormal basis for $\gr{g}$ then $\chi$ is precisely the Riemannian volume form w.r.t.~the left-invariant Riemannian metric $\langle \cdot, \cdot \rangle$ and compatible with the orientation of $G$ given by $\vv{X}_1, \ldots, \vv{X}_m$. In particular, the Riemannian volume form w.r.t.~a left-invariant Riemannian metric is always left-invariant, hence a constant multiple of the Haar volume form. As such, with respect to such a metric any left-invariant vector field $\vv{X} \in \gr{g}$ is \emph{(formally) skew-symmetric} i.e.
\begin{align*}
  \langle \vv{X} f, g \rangle_{L^2(G)} &= - \langle f, \vv{X} g \rangle_{L^2(G)}, \quad \forall f, g \in \cinfty(G).
\end{align*}

Particular relevant to what comes next are the so-called \emph{$\ad$-invariant metrics}: these are left-invariant Riemannian metrics $\langle \cdot, \cdot \rangle$ on $G$ with the additional property that
\begin{align}
  \langle [\vv{X}, \vv{Y}], \vv{Z} \rangle &= - \langle \vv{Y}, [\vv{X}, \vv{Z}] \rangle, \quad \forall \vv{X}, \vv{Y}, \vv{Z} \in \gr{g}. \label{eq:adinvariantmetric}
\end{align}
Such metrics always exist since we are assuming $G$ to be compact~\cite[Proposition~4.24]{knapp_lgbi}. The key point is that, in that case, if $\vv{X}_1, \ldots, \vv{X}_m \in \gr{g}$ is an orthonormal basis then the Laplace-Beltrami operator $\Delta_G$ associated to $\langle \cdot, \cdot \rangle$ can be written as
\begin{align}
  \Delta_G &= -\sum_{j = 1}^m \vv{X}_j^2 \label{eq:DeltaG_SS}
\end{align}
and moreover every left-invariant vector field on $G$ commutes with $\Delta_G$.
\section{Partial Fourier projection maps on product manifolds} \label{sec:partial_FPM}

Let $T, G$ be two compact, connected, smooth manifolds, orientable and oriented, and also carrying Riemannian metrics $\langle \cdot, \cdot \rangle^T, \langle \cdot, \cdot \rangle^G $, just like $M$ did in Section~\ref{sec:preliminaries}, and whose dimensions will be denoted by $n \dfn \dim T$ and $m \dfn \dim G$ respectively. Then their product enjoys the very same properties; for instance, the orientations of $T$ and $G$ induce canonically an orientation on $T \times G$: given coordinate charts of $T$ and $G$ compatible with the respective orientations, their ``Cartesian product'' is a coordinate chart of $T \times G$ compatible with its orientation (by definition). Moreover, $T \times G$ carries the product metric 
\begin{align*}
  \langle \cdot, \cdot \rangle &\dfn \pi_T^* \langle \cdot, \cdot \rangle^T +\pi_G^* \langle \cdot, \cdot \rangle^G
\end{align*}
where $\pi_T: T \times G \rarr T$ and $\pi_G: T \times G \rarr G$ are the natural projections. In other words, under the isomorphism $T_{(t,x)} (T \times G) \cong T_{t} T \oplus T_{x} G$ for $(t, x) \in T \times G$ we have
\begin{align*}
  \langle (u_1, v_1), (u_2, v_2) \rangle_{T_{(t,x)} (T \times G)} &\dfn \langle u_1, u_2 \rangle^T_{T_tT} + \langle v_1, v_2 \rangle^G_{T_x G}, \quad \forall u_1, u_2 \in T_tT, \ v_1, v_2 \in T_x G.
\end{align*}
Also, under the corresponding identification in the cotangent bundle $T_{(t,x)}^* (T \times G) \cong T_{t}^* T \oplus T_{x}^* G$ one can show that the following identity holds
\begin{align}
  \langle (\eta_1, \xi_1), (\eta_2, \xi_2) \rangle_{T^*_{(t,x)} (T \times G)} &= \langle \eta_1, \eta_2 \rangle^T_{T_{t}^* T} + \langle \xi_1, \xi_2 \rangle^G_{T_{x}^* G}, \quad \forall \eta_1, \eta_2 \in T_{t}^* T, \ \xi_1, \xi_2 \in T_{x}^* G, \label{eq:cotan_ip}
\end{align}
and also that for any $\psi \in \cinfty(T)$ and $\phi \in \cinfty(G)$ we have
\begin{align}
  \dd (\psi \otimes \phi)(t,x) &= \phi(x) \dd_T \psi(t) + \psi(t) \dd_G \phi(x) \quad \text{in $T^*_{(t,x)} (T \times G) \cong T^*_t T \oplus T^*_x G$} \label{eq:dotimes}
\end{align}
where $\dd$ (resp.~$\dd_T, \dd_G$) is the exterior derivative of $T \times G$ (resp.~$T, G$). If, moreover, we denote by $\dd V$ (resp.~$\dd V_T, \dd V_G$) the Riemannian volume form of $T \times G$ (resp.~$T, G$) with respect to the metric introduced above, then one can prove the following version of Fubini's Theorem:
\begin{Prop} \label{prop:fubini} For every $f \in \cinfty(T \times G)$ we have
  \begin{align*}
    \int_{T \times G} f(t,x) \ \dd V(t,x) &= \int_T \left( \int_G f(t, x) \ \dd V_G(x) \right) \dd V_T(t).
  \end{align*}
\end{Prop}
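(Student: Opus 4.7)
The plan is to reduce to classical Fubini for Lebesgue-type integrals by showing that, in product charts, the Riemannian volume form of $T \times G$ factors as a wedge product of the volume forms of the factors. This is essentially a computation with the determinant of a block-diagonal matrix.

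First I would fix atlases $\{(U_\alpha, \varphi_\alpha)\}$ of $T$ and $\{(V_\beta, \psi_\beta)\}$ of $G$ by positively oriented charts, and observe that the products $(U_\alpha \times V_\beta, \varphi_\alpha \times \psi_\beta)$ form a positively oriented atlas of $T \times G$. In such a product chart, if $g^T_{ij}$ and $g^G_{kl}$ denote the matrix entries of the respective metrics, then by the definition of the product metric the full metric matrix is block-diagonal:
\begin{align*}
 (g_{AB}) &= \begin{pmatrix} g^T_{ij} & 0 \\ 0 & g^G_{kl} \end{pmatrix},
\end{align*}
so $\det(g_{AB}) = \det(g^T_{ij}) \det(g^G_{kl})$, and therefore
\begin{align*}
 \dd V &= \sqrt{\det(g_{AB})} \ \dd t^1 \wedge \cdots \wedge \dd t^n \wedge \dd x^1 \wedge \cdots \wedge \dd x^m = \pi_T^* \dd V_T \wedge \pi_G^* \dd V_G
\end{align*}
in that chart. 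Because both sides are globally defined top-degree forms that agree in every product chart, this identity holds on all of $T \times G$.

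Next I would use a partition of unity $\{\chi_{\alpha\beta}\}$ subordinated to the open cover $\{U_\alpha \times V_\beta\}$ and reduce the statement to the case of a function $f$ compactly supported in a single product chart. On such a chart the formula becomes the assertion
\begin{align*}
 \int f(t,x) \sqrt{\det g^T(t)} \sqrt{\det g^G(x)} \ \dd t \, \dd x &= \int \left( \int f(t,x) \sqrt{\det g^G(x)} \ \dd x \right) \sqrt{\det g^T(t)} \ \dd t,
\end{align*}
which is immediate from the classical Fubini theorem applied to the continuous, compactly supported function $(t,x) \mapsto f(t,x) \sqrt{\det g^T(t) \det g^G(x)}$ on a product of Euclidean open sets.

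There is no real obstacle here beyond bookkeeping: the only slightly delicate point is checking that the orientation induced on $T \times G$ by the Cartesian product of positively oriented charts agrees with the orientation used in the definition of $\dd V$, which is precisely the convention fixed in the paragraph just before the statement. Everything else is a direct computation plus a partition-of-unity argument.
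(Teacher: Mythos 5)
Your proof is correct. The paper in fact states Proposition~\ref{prop:fubini} without proof, and your argument is exactly the standard one the authors implicitly rely on: the product metric is block-diagonal in a product chart, so $\dd V = \pi_T^* \dd V_T \wedge \pi_G^* \dd V_G$ locally and hence globally (the orientation convention you check is precisely the one fixed in the paragraph preceding the statement), after which a finite partition of unity subordinate to product charts reduces everything to the classical Fubini theorem for compactly supported continuous functions on Euclidean product open sets. No gaps.
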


Given $P$ a differential operator on $T$ we denote by $P^\sharp$ its natural lift to $T \times G$. Formally speaking, if $f \in \cinfty(T \times G)$ then the action of $P^\sharp$ on $f$ is obtained by ``freezing'' $x \in G$, which yields a function $f(\cdot, x) \in \cinfty(T)$ on which we allow $P$ to act, thus producing a smooth function $P(f(\cdot, x))$ in $T$ depending on the variable point $x \in G$; allowing then $x$ to vary produces a smooth function $P^\sharp f$ in $T \times G$, and the mapping $P^\sharp: \cinfty(T \times G) \rarr \cinfty(T \times G)$ thus lifted can be shown to be a differential operator. Concisely:
\begin{align*}
  (P^\sharp f)(t, x) &\dfn \left( P [f( \cdot, x)] \right) (t), \quad (t,x) \in T \times G.
\end{align*}
Notice that if $\psi \in \cinfty(T)$ and $\phi \in \cinfty(G)$ then
\begin{align}
  P^\sharp(\psi \otimes \phi) &= (P \psi) \otimes \phi.  \label{eq:Psharp_on_tensors}
\end{align}
Of course the roles of $T$ and $G$ here are interchangeable.

Let $\Delta$ (resp.~$\Delta_T, \Delta_G$) be the Laplace-Beltrami operator on $T \times G$ (resp.~$T, G$) associated to the underlying metric(s) above.
\begin{Prop} \label{prop:delta12} $\Delta = \Delta_T^\sharp + \Delta_G^\sharp$ as differential operators on $T \times G$.
  \begin{proof} Using~\eqref{eq:cotan_ip},~\eqref{eq:dotimes} and the definition of the Laplace-Beltrami operator one can show that
    \begin{align*}
      \left \langle \Delta (\psi_1 \otimes \phi_1), \psi_2 \otimes \phi_2 \right \rangle_{L^2(T \times G)} &= \left \langle (\Delta_T^\sharp + \Delta_G^\sharp) (\psi_1 \otimes \phi_1), \psi_2 \otimes \phi_2 \right \rangle_{L^2(T \times G)}
    \end{align*}
    for every $\psi_1, \psi_2 \in \cinfty(T)$ and $\phi_1, \phi_2 \in \cinfty(G)$. But $\cinfty(T) \otimes \cinfty(G)$ is dense in $L^2(T \times G)$, hence
    \begin{align*}
      \Delta (\psi_1 \otimes \phi_1) &= (\Delta_T^\sharp + \Delta_G^\sharp) (\psi_1 \otimes \phi_1), \quad \forall \psi_1 \in \cinfty(T), \ \phi_1 \in \cinfty(G).
    \end{align*}

    Now any $f \in \cinfty(T \times G)$ can be approximated in $L^2(T \times G)$ by a sequence in $\cinfty(T) \otimes \cinfty(G)$ -- where $\Delta$ and $\Delta_T^\sharp + \Delta_G^\sharp$ match -- , and such convergence also holds in $\D'(T \times G)$, where these, as differentials operators in $T \times G$, are continuous. Therefore $\Delta f = (\Delta_T^\sharp + \Delta_G^\sharp) f$ for every $f \in \cinfty(T \times G)$.
  \end{proof}
\end{Prop}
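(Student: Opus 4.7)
The plan is to verify the identity on elementary tensors $\psi \otimes \phi$ first, and then extend to all smooth functions by a density-plus-continuity argument, exactly as the author's own sketch suggests. Because $\Delta = \dd^* \dd$, it suffices to compute, for arbitrary $\psi_1, \psi_2 \in \cinfty(T)$ and $\phi_1, \phi_2 \in \cinfty(G)$,
\begin{align*}
  \langle \Delta(\psi_1 \otimes \phi_1), \psi_2 \otimes \phi_2 \rangle_{L^2(T \times G)} &= \langle \dd(\psi_1 \otimes \phi_1), \dd(\psi_2 \otimes \phi_2) \rangle_{L^2(T \times G; T^*(T \times G))}
\end{align*}
and show that it agrees with the corresponding pairing for $\Delta_T^\sharp + \Delta_G^\sharp$.

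The first step is to plug in the product-rule identity~\eqref{eq:dotimes}, which gives $\dd(\psi_i \otimes \phi_i)(t,x) = \phi_i(x)\dd_T\psi_i(t) + \psi_i(t)\dd_G\phi_i(x)$, viewed under the splitting $T^*_{(t,x)}(T\times G) \cong T^*_tT \oplus T^*_xG$. Inserting this into the cotangent inner product and invoking~\eqref{eq:cotan_ip}, the two cross terms vanish because they pair a pure $T^*T$-summand with a pure $T^*G$-summand, leaving
\begin{align*}
  \langle \dd(\psi_1 \otimes \phi_1), \dd(\psi_2 \otimes \phi_2) \rangle_{T^*_{(t,x)}(T\times G)} &= \phi_1(x)\overline{\phi_2(x)} \langle \dd_T\psi_1(t), \dd_T\psi_2(t) \rangle^T_{T^*_tT} \\
  &\quad + \psi_1(t)\overline{\psi_2(t)} \langle \dd_G\phi_1(x), \dd_G\phi_2(x) \rangle^G_{T^*_xG}.
\end{align*}
Applying Fubini (Proposition~\ref{prop:fubini}) to integrate this over $T \times G$ separates the two terms into products of $L^2$ pairings on $T$ and $G$ individually. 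The definition of $\Delta_T$ and $\Delta_G$ then rewrites these as $\langle \Delta_T\psi_1, \psi_2 \rangle_{L^2(T)} \langle \phi_1, \phi_2 \rangle_{L^2(G)} + \langle \psi_1, \psi_2 \rangle_{L^2(T)} \langle \Delta_G\phi_1, \phi_2 \rangle_{L^2(G)}$, which by~\eqref{eq:Psharp_on_tensors} is exactly $\langle (\Delta_T^\sharp + \Delta_G^\sharp)(\psi_1 \otimes \phi_1), \psi_2 \otimes \phi_2 \rangle_{L^2(T \times G)}$.

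Because both $\Delta(\psi_1 \otimes \phi_1)$ and $(\Delta_T^\sharp + \Delta_G^\sharp)(\psi_1 \otimes \phi_1)$ are elements of $L^2(T \times G)$ and the tensor products $\{\psi_2 \otimes \phi_2\}$ span a dense subspace of $L^2(T \times G)$, the two sides agree on all elementary tensors $\psi_1 \otimes \phi_1$. To extend to arbitrary $f \in \cinfty(T \times G)$, approximate $f$ in $L^2(T \times G)$ (hence in $\D'(T \times G)$) by a sequence in $\cinfty(T) \otimes \cinfty(G)$; since both $\Delta$ and $\Delta_T^\sharp + \Delta_G^\sharp$ are differential operators, they are continuous on $\D'(T \times G)$, and passing to the limit delivers the equality $\Delta f = (\Delta_T^\sharp + \Delta_G^\sharp) f$ in $\D'(T \times G)$, which by smoothness of both sides is the pointwise identity sought.

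The only mildly delicate point is the orthogonality cancellation of the cross terms, which is built into~\eqref{eq:cotan_ip}; everything else is essentially bookkeeping combined with the density/continuity wrap-up. I do not foresee any serious obstacle beyond keeping the three inner products (on $T$, on $G$, on $T \times G$) clearly separated during the computation.
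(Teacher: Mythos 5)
Your proposal is correct and follows essentially the same route as the paper: verify the identity weakly on elementary tensors using~\eqref{eq:cotan_ip},~\eqref{eq:dotimes} and $\Delta = \dd^*\dd$, upgrade to equality of functions by density of $\cinfty(T)\otimes\cinfty(G)$ in $L^2(T\times G)$, and then extend to all of $\cinfty(T\times G)$ by approximating in $L^2$ (hence in $\D'$) and using continuity of differential operators on distributions. You merely spell out the tensor computation that the paper leaves implicit, which is fine.
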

    
For each $\mu \in \sigma(\Delta_T)$ (resp.~$\lambda \in \sigma(\Delta_G)$) we denote by $E^T_\mu \sset \cinfty(T)$ (resp.~$E^G_\lambda \sset \cinfty(G)$) the eigenspace of $\Delta_T$ (resp.~$\Delta_G$) associated to $\mu$ (resp.~$\lambda$). We choose bases for them
\begin{align*}
  \{ \psi^\mu_i \st 1 \leq i \leq d^T_\mu \}, &\quad \text{where $d^T_\mu \dfn \dim E^T_\mu$}, \\
  \{ \phi^\lambda_j \st 1 \leq j \leq d^G_\lambda \}, &\quad \text{where $d^G_\lambda \dfn \dim E^G_\lambda$},
\end{align*}
which are orthonormal w.r.t.~the inner products inherited from $L^2(T), L^2(G)$, respectively.
\begin{Prop} \label{prop:hilbert_basis_product} The set
  \begin{align*}
    \mathcal{S} &\dfn \left \{ \psi^\mu_i \otimes \phi^\lambda_j \st 1 \leq i \leq d^T_\mu, \ 1 \leq j \leq d^G_\lambda, \ \mu \in \sigma(\Delta_T), \ \lambda \in \sigma(\Delta_G) \right\}
  \end{align*}
  is a Hilbert basis for $L^2(T \times G)$.
  \begin{proof} A straightforward computation using Proposition~\ref{prop:fubini} proves that $\mathcal{S}$ is an orthonormal family.
    As for the density of $\Span_\C \mathcal{S}$ in $L^2(T \times G)$, given $f \in L^2(T \times G)$ and $\epsilon > 0$ we first select finitely many $\psi_k \in L^2(T)$, $\phi_k \in L^2(G)$, $k \in \{1, \ldots, r\}$, such that
    \begin{align*}
      \left \| f - \sum_{k = 1}^r \psi_k \otimes \phi_k \right \|_{L^2(T \times G)} &< \frac{\epsilon}{2}.
    \end{align*}
    Next, for each $k \in \{1, \ldots, r\}$ we select
    \begin{align*}
      \phi_k' \in \Span_\C \left \{ \phi^\lambda_j \st 1 \leq j \leq d^G_\lambda, \ \lambda \in \sigma(\Delta_G) \right\}
    \end{align*}
    such that $\| \phi_k - \phi_k' \|_{L^2(G)} < \epsilon/4r (1 + \| \psi_k \|_{L^2(T)})$, and then
    \begin{align*}
      \psi_k' \in \Span_\C \left \{ \psi^\mu_i \st 1 \leq i \leq d^T_\mu, \ \mu \in \sigma(\Delta_T) \right\}
    \end{align*}
    such that $\| \psi_k - \psi_k' \|_{L^2(T)} < \epsilon/4r(1 + \| \phi_k' \|_{L^2(G)})$. For instance, one may take for $\phi'_k$ (resp.~$\psi'_k$) a convenient finite sum in the orthogonal expansion of $\phi_k$ (resp.~$\psi_k$):
    \begin{align*}
      \phi_k = \sum_{\lambda \in \sigma(\Delta_G)} \sum_{j = 1}^{d^G_\lambda} \left \langle \phi_k, \phi^\lambda_j \right \rangle_{L^2(G)} \phi^\lambda_j &\left( \text{resp.}~\psi_k = \sum_{\mu \in \sigma(\Delta_T)} \sum_{i = 1}^{d^T_\mu} \left \langle \psi_k, \psi^\mu_i \right \rangle_{L^2(T)} \psi^\mu_i \right).
    \end{align*}
    A simple computation then shows that 
    \begin{align*}
      \left \| \sum_{k = 1}^r \psi_k \otimes \phi_k - \sum_{k = 1}^r \psi_k' \otimes \phi_k' \right \|_{L^2(T \times G)} &< \frac{\epsilon}{2}.
    \end{align*}
  \end{proof}
\end{Prop}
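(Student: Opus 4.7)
The plan is to handle the two defining properties of a Hilbert basis separately: orthonormality first, which is a straightforward computation, and then density of the span, which is the substantive step.

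For orthonormality, I would fix two elements $\psi^{\mu}_i \otimes \phi^{\lambda}_j$ and $\psi^{\mu'}_{i'} \otimes \phi^{\lambda'}_{j'}$ of $\mathcal{S}$ and compute their $L^2(T \times G)$ inner product directly from~\eqref{eq:inner_prod_1forms}. Since the integrand factors as $(\psi^{\mu}_i \overline{\psi^{\mu'}_{i'}})(t) \cdot (\phi^{\lambda}_j \overline{\phi^{\lambda'}_{j'}})(x)$, Proposition~\ref{prop:fubini} reduces the integral to the product
\[
  \langle \psi^{\mu}_i, \psi^{\mu'}_{i'} \rangle_{L^2(T)} \cdot \langle \phi^{\lambda}_j, \phi^{\lambda'}_{j'} \rangle_{L^2(G)},
\]
which equals $\delta_{\mu \mu'} \delta_{ii'} \delta_{\lambda \lambda'} \delta_{jj'}$ by orthonormality of the two one-factor bases (recall that distinct eigenspaces of $\Delta_T$ or $\Delta_G$ are orthogonal).

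For the density of $\Span_{\C} \mathcal{S}$ in $L^2(T \times G)$, the strategy is a two-step approximation. First, I would approximate a given $f \in L^2(T \times G)$ within $\epsilon / 2$ by a finite linear combination $\sum_{k = 1}^r \psi_k \otimes \phi_k$ of pure tensors, with $\psi_k \in L^2(T)$ and $\phi_k \in L^2(G)$. One convenient route is to note that $\cinfty(T) \otimes \cinfty(G)$ separates points on the compact manifold $T \times G$ and contains the constants, so by Stone--Weierstrass it is uniformly dense in $\cinfty(T \times G)$ and hence $L^2$-dense in $L^2(T \times G)$; alternatively this follows from the general identification $L^2(T \times G) \cong L^2(T) \hat{\otimes} L^2(G)$. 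The second step is to replace, for each $k$, the factors $\psi_k$ and $\phi_k$ by finite partial sums $\psi_k'$ and $\phi_k'$ of their respective orthogonal expansions in the one-factor Hilbert bases $\{\psi^\mu_i\}$ and $\{\phi^\lambda_j\}$, which exist because the Spectral Theorem (applied to $\Delta_T$ and $\Delta_G$ as in~\eqref{eq:dec_delta}) guarantees that these families are Hilbert bases of $L^2(T)$ and $L^2(G)$.

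To close the argument I would use the elementary telescoping estimate
\[
  \| \psi \otimes \phi - \psi' \otimes \phi' \|_{L^2(T \times G)} \leq \| \psi - \psi' \|_{L^2(T)} \| \phi \|_{L^2(G)} + \| \psi' \|_{L^2(T)} \| \phi - \phi' \|_{L^2(G)},
\]
which itself is an immediate consequence of Proposition~\ref{prop:fubini}. Choosing $\phi_k'$ first so that $\| \phi_k - \phi_k' \|_{L^2(G)}$ is small compared with $\| \psi_k \|_{L^2(T)}$, and then $\psi_k'$ so that $\| \psi_k - \psi_k' \|_{L^2(T)}$ is small compared with $\| \phi_k' \|_{L^2(G)}$, yields a total error below $\epsilon / 2$ in the second step. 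Combining the two halves, $\sum_{k=1}^r \psi_k' \otimes \phi_k' \in \Span_{\C} \mathcal{S}$ lies within $\epsilon$ of $f$, proving density. The only mildly delicate point is the bookkeeping in this last telescoping estimate, and the only conceptual input beyond Fubini is the density of $L^2(T) \otimes L^2(G)$ in $L^2(T \times G)$; everything else reduces mechanically to the one-factor spectral decompositions.
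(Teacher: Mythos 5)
Your proposal is correct and follows essentially the same route as the paper: orthonormality via Proposition~\ref{prop:fubini}, then approximation by finite sums of pure tensors followed by truncation of the one-factor orthogonal expansions, with exactly the paper's order of choices for $\phi_k'$ and $\psi_k'$. The only remark is that the telescoping split matching that order of choices is $\psi_k\otimes\phi_k-\psi_k'\otimes\phi_k'=\psi_k\otimes(\phi_k-\phi_k')+(\psi_k-\psi_k')\otimes\phi_k'$ (the one the paper implicitly uses); the split you wrote down also closes, but only after observing that $\|\psi_k'\|_{L^2(T)}\le\|\psi_k\|_{L^2(T)}$ (Bessel, since $\psi_k'$ is a partial sum of the expansion of $\psi_k$) and that $\|\phi_k\|_{L^2(G)}\le\|\phi_k'\|_{L^2(G)}+\epsilon/4r$.
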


\begin{Prop} \label{prop:alphaissum} Every $\alpha \in \sigma(\Delta)$ is of the form $\alpha = \mu + \lambda$ for some $\mu \in \sigma(\Delta_T)$ and $\lambda \in \sigma(\Delta_G)$.
  \begin{proof} Suppose $\alpha \in \sigma(\Delta)$ i.e.~there exists a nonzero $f \in \cinfty(T \times G)$ such that $\Delta f = \alpha f$. We look at the series expansion of $f$ in terms of the Hilbert basis $\mathcal{S}$ by Proposition~\ref{prop:hilbert_basis_product} -- its convergence holds in $L^2(T \times G)$, hence also in $\D'(T \times G)$, where $\Delta$ is continuous -- while noticing that
    \begin{align*}
      \Delta \left( \psi^\mu_i \otimes \phi^\lambda_j \right) = (\Delta_T^\sharp + \Delta_G^\sharp) \left( \psi^\mu_i \otimes \phi^\lambda_j \right) = \left( \Delta_T \psi^\mu_i \right) \otimes \phi^\lambda_j + \psi^\mu_i \otimes \left( \Delta_G \phi^\lambda_j \right) = (\mu + \lambda) \psi^\mu_i \otimes \phi^\lambda_j
    \end{align*}
    (where we used Proposition~\ref{prop:delta12} and property~\eqref{eq:Psharp_on_tensors}): comparing the series expansions of $\Delta f$ and $\alpha f$ termwise leads us to the desired conclusion.
  \end{proof}
\end{Prop}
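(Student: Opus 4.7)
My plan is to exploit the Hilbert basis $\mathcal{S}$ provided by Proposition~\ref{prop:hilbert_basis_product}, together with Proposition~\ref{prop:delta12} and the tensor product property~\eqref{eq:Psharp_on_tensors}, to reduce the eigenvalue equation $\Delta f = \alpha f$ to a statement about coefficients in the orthogonal expansion of $f$.

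First, I would take a nonzero $f \in \cinfty(T \times G)$ satisfying $\Delta f = \alpha f$ (which exists since $\alpha \in \sigma(\Delta)$) and expand it in the Hilbert basis $\mathcal{S}$, writing
\begin{align*}
  f &= \sum_{\mu, \lambda} \sum_{i, j} c^{\mu, \lambda}_{i, j} \, \psi^\mu_i \otimes \phi^\lambda_j
\end{align*}
with convergence in $L^2(T \times G)$. Next, as already computed in the excerpt using Proposition~\ref{prop:delta12} and~\eqref{eq:Psharp_on_tensors}, each basis element is an eigenfunction of $\Delta$:
\begin{align*}
  \Delta \left( \psi^\mu_i \otimes \phi^\lambda_j \right) &= (\mu + \lambda) \, \psi^\mu_i \otimes \phi^\lambda_j.
\end{align*}

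The key step is that $L^2$-convergence implies convergence in $\D'(T \times G)$, and $\Delta$ is continuous on the latter, so applying $\Delta$ termwise is legitimate. Comparing the resulting series expansions of $\Delta f$ and $\alpha f$ coefficient by coefficient -- which is valid because $\mathcal{S}$ is an orthonormal family -- one concludes that for every tuple $(\mu, \lambda, i, j)$,
\begin{align*}
  (\mu + \lambda - \alpha) \, c^{\mu, \lambda}_{i, j} &= 0.
\end{align*}
Since $f \neq 0$, at least one coefficient $c^{\mu, \lambda}_{i, j}$ is nonzero, and for that particular pair we must have $\mu + \lambda = \alpha$, which is the desired conclusion. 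I do not expect any genuine obstacle here beyond recording the termwise comparison carefully, as all analytic inputs (density, continuity of $\Delta$, structure of the basis) are already in place from the earlier propositions.
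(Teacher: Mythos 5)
Your proposal is correct and follows essentially the same route as the paper's proof: expansion in the Hilbert basis $\mathcal{S}$, the eigenfunction computation via Proposition~\ref{prop:delta12} and~\eqref{eq:Psharp_on_tensors}, termwise application of $\Delta$ justified by $L^2$- hence $\D'$-convergence, and a termwise comparison of the expansions of $\Delta f$ and $\alpha f$. Your explicit coefficient bookkeeping just spells out what the paper leaves implicit.
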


\begin{Rem} Given $\alpha \in \R_+$ the set
  \begin{align*}
    \mathcal{P}(\alpha) &\dfn \{ (\mu, \lambda) \in \sigma(\Delta_T) \times \sigma(\Delta_G) \st \mu + \lambda = \alpha \} 
  \end{align*}
  may contain more than one pair i.e.~in principle there may exist distinct $(\mu, \lambda), (\mu', \lambda') \in \sigma(\Delta_T) \times \sigma(\Delta_G)$ for which $\mu + \lambda = \mu' + \lambda'$. However such a set is necessarily finite, since both $\sigma(\Delta_T)$ and $\sigma(\Delta_G)$ are discrete and unbounded.
\end{Rem}

Using similar arguments as in the proof of Proposition~\ref{prop:alphaissum} the following can also be inferred:
\begin{Cor} \label{cor:relationshipeigens} The eigenspace of $\Delta$ associated to $\alpha \in \sigma(\Delta)$ is precisely
  \begin{align*}
    E_\alpha &= \bigoplus_{(\mu, \lambda) \in \mathcal{P}(\alpha)} E^T_{\mu} \otimes E^G_{\lambda}.
  \end{align*}
  An orthonormal basis for this space w.r.t.~the $L^2(T \times G)$ inner product is
  \begin{align*}
    \left\{ \psi^\mu_i \otimes \phi^\lambda_j  \st 1 \leq i \leq d^T_\mu, \ 1 \leq j \leq d^G_\lambda, \ (\mu, \lambda) \in \mathcal{P}(\alpha) \right\}. 
  \end{align*}
\end{Cor}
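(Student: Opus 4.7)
The plan is to prove the two stated claims in tandem, closely mirroring the argument used in Proposition~\ref{prop:alphaissum}. The forward inclusion $\bigoplus_{(\mu,\lambda)\in\mathcal{P}(\alpha)} E^T_\mu \otimes E^G_\lambda \subset E_\alpha$ is the easy direction: for any $\psi \in E^T_\mu$ and $\phi \in E^G_\lambda$ with $\mu + \lambda = \alpha$, Proposition~\ref{prop:delta12} together with~\eqref{eq:Psharp_on_tensors} immediately yields
\begin{align*}
\Delta(\psi \otimes \phi) = \Delta_T^\sharp(\psi \otimes \phi) + \Delta_G^\sharp(\psi \otimes \phi) = (\mu + \lambda)\,\psi \otimes \phi = \alpha\, \psi \otimes \phi,
\end{align*}
so each such $\psi \otimes \phi$ lies in $E_\alpha$, and by linearity so does each summand.

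For the reverse inclusion, I would take $f \in E_\alpha$ and expand it in the Hilbert basis $\mathcal{S}$ from Proposition~\ref{prop:hilbert_basis_product}:
\begin{align*}
f = \sum_{\mu \in \sigma(\Delta_T)} \sum_{\lambda \in \sigma(\Delta_G)} \sum_{i=1}^{d^T_\mu} \sum_{j=1}^{d^G_\lambda} c^{\mu,\lambda}_{i,j}\; \psi^\mu_i \otimes \phi^\lambda_j,
\end{align*}
the series converging in $L^2(T \times G)$, hence also in $\D'(T \times G)$. Applying $\Delta$ termwise (using its continuity on $\D'(T\times G)$) and invoking the eigenvalue computation above gives
\begin{align*}
\alpha f = \Delta f = \sum c^{\mu,\lambda}_{i,j}(\mu + \lambda)\, \psi^\mu_i \otimes \phi^\lambda_j.
\end{align*}
Matching this termwise with $\alpha f$ (using the uniqueness of coefficients in the expansion with respect to the orthonormal family $\mathcal{S}$) forces $c^{\mu,\lambda}_{i,j}(\mu + \lambda - \alpha) = 0$ for every admissible index, so only the terms with $(\mu, \lambda) \in \mathcal{P}(\alpha)$ survive. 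This places $f$ in $\bigoplus_{(\mu,\lambda)\in\mathcal{P}(\alpha)} E^T_\mu \otimes E^G_\lambda$.

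The fact that the sum is direct (and orthogonal) then comes for free: for distinct pairs $(\mu,\lambda), (\mu',\lambda') \in \mathcal{P}(\alpha)$ one must have either $\mu \neq \mu'$ or $\lambda \neq \lambda'$, and in either case Proposition~\ref{prop:fubini} combined with the pairwise orthogonality of the eigenspaces of $\Delta_T$ in $L^2(T)$ and of $\Delta_G$ in $L^2(G)$ makes $E^T_\mu \otimes E^G_\lambda$ and $E^T_{\mu'} \otimes E^G_{\lambda'}$ orthogonal in $L^2(T \times G)$. The orthonormality of the proposed basis for $E_\alpha$ is then inherited from $\mathcal{S}$, and spanning follows from the inclusion just established. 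I do not anticipate a real obstacle here; the only minor subtlety is confirming that $\Delta$ acts termwise on the $L^2$-expansion, which is standard given continuity on $\D'(T \times G)$ and was already used in Proposition~\ref{prop:alphaissum}.
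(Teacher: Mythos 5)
Your argument is correct and is essentially the paper's own: the paper deduces the corollary "using similar arguments as in the proof of Proposition~\ref{prop:alphaissum}", i.e.\ exactly your expansion of $f\in E_\alpha$ in the Hilbert basis $\mathcal{S}$, termwise application of $\Delta$ via Proposition~\ref{prop:delta12} and~\eqref{eq:Psharp_on_tensors}, and comparison of coefficients, with orthogonality inherited from $\mathcal{S}$. No gaps to report.
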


Now let $f \in \cinfty(T \times G)$ and, given $t \in T$, we once more regard $f(t, \cdot)$ as a smooth function on $G$, for which we consider its orthogonal expansion
\begin{align*}
  f(t, \cdot) &= \sum_{\lambda \in \sigma(\Delta_G)} \mathcal{F}^G_{\lambda} (f(t, \cdot))
\end{align*}
where $\mathcal{F}^G_{\lambda} (f(t, \cdot)) \in E^G_{\lambda}$ can be written, in terms of our previously chosen basis, as
\begin{align}
  \mathcal{F}^G_{\lambda} (f(t, \cdot)) = \sum_{j = 1}^{d^G_{\lambda}} \langle f(t, \cdot), \phi^\lambda_j \rangle_{L^2(G)} \phi^\lambda_j = \sum_{j = 1}^{d^G_{\lambda}} \left(\int_{G} f(t, x) \overline{\phi^\lambda_j(x)} \dd V_G(x) \right) \phi^\lambda_j . \label{eq:partial_ft_smooth}
\end{align}
Allowing now $t$ to vary in $T$ we see at once that for each given $\lambda \in \sigma(\Delta_G)$ the map
\begin{align*}
  t \in T &\longmapsto \mathcal{F}^G_{\lambda} (f(t, \cdot)) \in E^G_{\lambda} 
\end{align*}
is smooth, hence an element of $\cinfty(T; E^G_{\lambda}) \cong \cinfty(T) \otimes  E^{G}_{\lambda}$, which we denote by $\mathcal{F}^G_{\lambda} (f)$ or $\hat{f}(\cdot, \lambda)$ depending on the context. We can then consider the $E_\lambda^G$-valued orthogonal expansion w.r.t.~$\Delta_T$ of $\mathcal{F}^G_{\lambda} (f) \in \cinfty(T;E_\lambda^G)$, in the sense of Section~\ref{rem:ext_fourier_to_tensors}: given $\mu \in \sigma(\Delta_T)$ we have
\begin{align*}
  \mathcal{F}^T_{\mu} \mathcal{F}^G_{\lambda} (f) &= \mathcal{F}^T_{\mu} \left( \sum_{j = 1}^{d^G_{\lambda}} \left(\int_G f(\cdot, x) \overline{\phi^\lambda_j(x)} \dd V_G(x) \right) \otimes \phi^\lambda_j \right) \\
  &= \sum_{j = 1}^{d^G_{\lambda}} \mathcal{F}^T_{\mu} \left(\int_G f(\cdot, x) \overline{\phi^\lambda_j(x)} \dd V_G(x) \right) \otimes \phi^\lambda_j  \\
  &= \sum_{j = 1}^{d^G_{\lambda}} \left( \sum_{i = 1}^{d^T_{\mu}} \left( \int_{T} \left(\int_{G} f(t,x) \overline{\phi^\lambda_j(x)} \dd V_G(x) \right) \overline{\psi^\mu_i(t)} \dd V_T(t) \right) \psi^\mu_i \right) \otimes \phi^\lambda_j \\
  &= \sum_{i = 1}^{d^T_{\mu}} \sum_{j = 1}^{d^G_{\lambda}} \left \langle f, \psi^\mu_i \otimes \phi^\lambda_j \right \rangle_{L^2(T \times G)} \psi^\mu_i \otimes \phi^\lambda_j
\end{align*}
which is an element of $E^T_{\mu} \otimes E^G_{\lambda}$. By Corollary~\ref{cor:relationshipeigens} this is nothing but a portion of $\mathcal{F}_\alpha(f)$, and we actually conclude that
\begin{align}
  \mathcal{F}_\alpha(f) &= \sum_{(\mu, \lambda) \in \mathcal{P}(\alpha)} \mathcal{F}^T_{\mu} \mathcal{F}^G_{\lambda} (f), \quad \forall \alpha \in \sigma(\Delta). \label{eq:Fourier_totalfrompartials}
\end{align}

On time, we notice that for every $\lambda \in \sigma(\Delta_G)$ we have
\begin{align}
  \cinfty(T; E_\lambda^G) &= \left \{ f \in \cinfty(T \times G) \st \Delta_G^\sharp f = \lambda f \right \} \label{eq:eigenspaces_partial}
\end{align}
-- which can be easily checked by analyzing the orthogonal expansion of any $f \in \cinfty(T \times G)$ w.r.t.~our Hilbert basis $\mathcal{S}$ and reasoning as in Proposition~\ref{prop:alphaissum} -- and that $\mathcal{F}_\lambda^G: \cinfty(T \times G) \rarr \cinfty(T; E_\lambda^G)$ is a projection. Indeed, given $f \in \cinfty(T \times G)$ and $\psi \in \cinfty(T; E_\lambda^G)$, which we write
\begin{align*}
  \psi &= \sum_{j' = 1}^{d_\lambda^G} \psi_{j'} \otimes \phi_{j'}^\lambda, \quad \psi_{j'} \in \cinfty(T),
\end{align*}
we have by~\eqref{eq:partial_ft_smooth} that
\begin{align*}
  \left \langle \mathcal{F}_\lambda^G(f), \psi \right \rangle_{L^2(T \times G)} &= \int_T \int_G \sum_{j = 1}^{d^G_{\lambda}} \left(\int_{G} f(t, x) \overline{\phi^\lambda_j(x)} \dd V_G(x) \right) \phi^\lambda_j(y) \sum_{j' = 1}^{d_\lambda^G} \overline{\psi_{j'}(t) \phi_{j'}^\lambda(y)} \dd V_G(y) \dd V_T(t) \\
  &= \sum_{j = 1}^{d^G_{\lambda}} \int_T \int_{G} f(t, x) \overline{\phi^\lambda_j(x)} \dd V_G(x) \overline{\psi_{j}(t)} \dd V_T(t) \\
  &= \int_T \int_{G} f(t, x) \overline{\psi(t,x)} \dd V_G(x) \dd V_T(t) \\
  &= \left \langle f, \psi \right \rangle_{L^2(T \times G)}
\end{align*}
that is, $\mathcal{F}_\lambda^G(f)$ is characterized as the unique element in $\cinfty(T; E_\lambda^G)$ with the property that
\begin{align}
  \left \langle \mathcal{F}_\lambda^G(f), \psi \right \rangle_{L^2(T \times G)} &= \left \langle f, \psi \right \rangle_{L^2(T \times G)}, \quad \forall \psi \in \cinfty(T; E_\lambda^G). \label{eq:charac_Fourier_part}
\end{align}
It follows at once that $\mathcal{F}_\lambda^G: \cinfty(T \times G) \rarr \cinfty(T; E_\lambda^G)$ acts as the identity on $\cinfty(T; E_\lambda^G)$.

In order to extend the definitions above to distributions $f \in \D'(T \times G)$, given $\lambda \in \sigma(\Delta_G)$ we expect to construct an object
\begin{align*}
  \mathcal{F}^G_{\lambda} (f) \in \D'(T; E^{G}_{\lambda}).
\end{align*}
First of all, notice that we may identify $(E^{G}_{\lambda})^*$ with $E^{G}_{\lambda}$ itself by means of the anti-Riesz isomorphism provided by its Hermitian product
\begin{align*}
  \phi \in E^{G}_{\lambda} &\longmapsto \langle \cdot, \bar{\phi} \rangle_{L^2(G)} \in (E^{G}_{\lambda})^*
\end{align*}
for which $\left\{ \overline{\phi^\lambda_j} \st 1 \leq j \leq d^G_\lambda \right\}$ is the corresponding dual basis. Thus an element $g \in \cinfty(T; (E^G_\lambda)^*)$ can be written uniquely as
\begin{align*}
  g &= \sum_{j = 1}^{d^G_\lambda} g_j \otimes \overline{\phi^\lambda_j}, \quad g_j \in \cinfty(T).
\end{align*}
Note that when $f \in \cinfty(T\times G)$ we have seen~\eqref{eq:charac_Fourier_part} that we can apply $\mathcal{F}_\lambda^{G}(f)$, as an element of $\D'(T; E_{\lambda}^{G})$, to $g \in \cinfty(T; (E^G_\lambda)^*)$ and obtain
\begin{align*}
    \langle \mathcal{F}^G_{\lambda} (f), g \rangle = \sum_{j = 1}^{d^G_\lambda} \left( \int_{T} \int_{G} f(t, x) g_j(t)\overline{\phi^\lambda_j(x)} \dd V_G(x) \dd V_T(t) \right) \langle \phi_j^{\lambda}, \phi_j^{\lambda}\rangle_{L^{2}(G)} = \langle f, g \rangle
\end{align*}

Now given $f \in \D'(T \times G)$ its projection $\mathcal{F}^G_{\lambda} (f) \in \D'(T; E^G_\lambda)$ should also be written uniquely as
\begin{align*}
  \mathcal{F}^G_{\lambda} (f) &= \sum_{j = 1}^{d^G_\lambda} F_{j} \otimes \phi^\lambda_{j}, \quad F_{j} \in \D'(T),
\end{align*}
where, as one can now easily guess,
\begin{align*}
  \langle F_j, \psi \rangle &\dfn  \langle f, \psi \otimes \overline{\phi_j^\lambda} \rangle, \quad \forall \psi \in \cinfty(T).
\end{align*}
We have thus defined a linear map
\begin{align*}
  \mathcal{F}^G_{\lambda}: \D'(T \times G) \longrightarrow \D'(T; E^G_\lambda)
\end{align*}
which is essentially the transpose of the inclusion map $\cinfty(T; E^G_\lambda) \hookrightarrow \cinfty(T \times G)$. We can now characterize smoothness in terms of the double partial Fourier maps.
\begin{Prop} \label{prop:pw_product} A distribution $f \in \D'(T \times G)$ is smooth if and only if for every $s > 0$ there exists $C > 0$ such that
  \begin{align*}
    \| \mathcal{F}^T_{\mu} \mathcal{F}^G_{\lambda} (f) \|_{L^2(T \times G)} &\leq C (1 + \mu + \lambda)^{-s}, \quad \forall (\mu, \lambda) \in \sigma(\Delta_T) \times \sigma(\Delta_G).
  \end{align*}
  \begin{proof} Denoting by $\mathcal{F}_\alpha : \D'(T \times G) \rarr E_\alpha$ the ``total'' Fourier projection map, for $\alpha \in \sigma(\Delta)$, we know (Proposition~\ref{prop:charac_smoothness_fourier_proj}) that $f$ is smooth if and only if for each $s > 0$ there exists $C > 0$ such that
    \begin{align*}
      \| \mathcal{F}_{\alpha} (f) \|_{L^2(T \times G)} &\leq C (1 + \alpha)^{-s}, \quad \forall \alpha \in \sigma(\Delta).
    \end{align*}
    But thanks to~\eqref{eq:Fourier_totalfrompartials} -- which also holds for distributions -- we have
    \begin{align}
      \| \mathcal{F}_\alpha(f) \|_{L^2(T \times G)}^2  &= \sum_{(\mu, \lambda) \in \mathcal{P}(\alpha)} \| \mathcal{F}^T_{\mu} \mathcal{F}^G_{\lambda} (f) \|_{L^2(T \times G)}^2, \quad \forall \alpha \in \sigma(\Delta), \label{eq:Fourier_totalfrompartialsnorms}
    \end{align}
    hence assuming $f \in \cinfty(T \times G)$ we have, for $(\mu, \lambda) \in \mathcal{P}(\alpha)$,
    \begin{align*}
      \| \mathcal{F}^T_{\mu} \mathcal{F}^G_{\lambda} (f) \|_{L^2(T \times G)} \leq \| \mathcal{F}_{\alpha} (f) \|_{L^2(T \times G)} \leq C (1 + \alpha)^{-s} = C (1 + \mu + \lambda)^{-s}
    \end{align*}
    and the conclusion follows since every $(\mu, \lambda) \in \sigma(\Delta_T) \times \sigma(\Delta_G)$ belongs to some $\mathcal{P}(\alpha)$.

    As for the converse, by hypothesis for each $s > 0$ there exists $C > 0$ such that
    \begin{align*}
      \| \mathcal{F}^T_{\mu} \mathcal{F}^G_{\lambda} (f) \|_{L^2(T \times G)} &\leq C (1 + \mu + \lambda)^{-s - 2n - 2m}, \quad \forall (\mu, \lambda) \in \sigma(\Delta_T) \times \sigma(\Delta_G),
    \end{align*}
    where $n = \dim T$, $m = \dim G$. Then by~\eqref{eq:Fourier_totalfrompartialsnorms} we have for $\alpha \in \sigma(\Delta)$
    \begin{align*}
      \| \mathcal{F}_\alpha(f) \|_{L^2(T \times G)} &\leq |\mathcal{P}(\alpha)| C (1 + \alpha)^{-s - 2n - 2m}
    \end{align*}
    but from Corollary~\ref{cor:relationshipeigens} and Weyl's asymptotic formula~\eqref{eq:weyl} we have
    \begin{align*}
      |\mathcal{P}(\alpha)| \leq \sum_{(\mu, \lambda) \in \mathcal{P}(\alpha)} d^T_\mu d^G_\lambda = \dim E_\alpha \leq C' \alpha^{2(n + m)}
    \end{align*}
    where $C' > 0$ is independent of $\alpha$, from which it follows that
    \begin{align*}
      \| \mathcal{F}_\alpha(f) \|_{L^2(T \times G)} &\leq C C' (1 + \alpha)^{-s}, \quad \forall \alpha \in \sigma(\Delta).
    \end{align*}
  \end{proof}
\end{Prop}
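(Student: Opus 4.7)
My plan is to reduce both directions to the ``total'' Fourier characterization of smoothness given in Proposition~\ref{prop:charac_smoothness_fourier_proj}, using identity~\eqref{eq:Fourier_totalfrompartials} to pass back and forth between the total projections $\mathcal{F}_\alpha(f)$ and the partial ones $\mathcal{F}^T_\mu \mathcal{F}^G_\lambda(f)$. The crucial structural input is that, by Corollary~\ref{cor:relationshipeigens}, the summands in~\eqref{eq:Fourier_totalfrompartials} live in the pairwise orthogonal finite-dimensional subspaces $E^T_\mu \otimes E^G_\lambda$ of $E_\alpha$, so the decomposition is orthogonal in $L^2(T \times G)$ and yields the Pythagorean identity
\begin{align*}
  \|\mathcal{F}_\alpha(f)\|_{L^2(T \times G)}^2 &= \sum_{(\mu,\lambda) \in \mathcal{P}(\alpha)} \|\mathcal{F}^T_\mu \mathcal{F}^G_\lambda(f)\|_{L^2(T \times G)}^2.
\end{align*}

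For the easy direction, assume $f \in \cinfty(T \times G)$. Given any $(\mu,\lambda) \in \sigma(\Delta_T) \times \sigma(\Delta_G)$, a tensor product of eigenfunctions is an eigenfunction of $\Delta = \Delta_T^\sharp + \Delta_G^\sharp$ (by Proposition~\ref{prop:delta12} and~\eqref{eq:Psharp_on_tensors}), so $\alpha \dfn \mu + \lambda \in \sigma(\Delta)$. Then the Pythagorean identity gives the trivial domination $\|\mathcal{F}^T_\mu \mathcal{F}^G_\lambda(f)\|_{L^2(T \times G)} \leq \|\mathcal{F}_\alpha(f)\|_{L^2(T \times G)}$, and Proposition~\ref{prop:charac_smoothness_fourier_proj}(1) furnishes the required rapid decay $\leq C(1 + \alpha)^{-s} = C(1 + \mu + \lambda)^{-s}$.

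The reverse direction is the substantive one: I need to reconstitute $\|\mathcal{F}_\alpha(f)\|$ from the partial bounds and then control the possible loss coming from the cardinality of $\mathcal{P}(\alpha)$. The idea is to absorb that cardinality into the exponent by starting from the hypothetical bound with decay rate $-s - 2(n+m)$ rather than just $-s$. Then the Pythagorean identity, together with a crude estimate of the number of terms, gives $\|\mathcal{F}_\alpha(f)\|_{L^2(T \times G)} \leq |\mathcal{P}(\alpha)|^{1/2} \, C (1 + \alpha)^{-s - 2(n+m)}$ (or one may use the triangle inequality directly with a factor $|\mathcal{P}(\alpha)|$ instead, with essentially the same outcome).

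The main obstacle — and the only place where geometric information beyond orthogonality enters — is the polynomial control of the multiplicity $|\mathcal{P}(\alpha)|$. Here I invoke Corollary~\ref{cor:relationshipeigens}, which gives $|\mathcal{P}(\alpha)| \leq \sum_{(\mu,\lambda) \in \mathcal{P}(\alpha)} d^T_\mu d^G_\lambda = \dim E_\alpha$, together with Weyl's asymptotic formula~\eqref{eq:weyl} applied to $T \times G$ (of dimension $n + m$) to get $\dim E_\alpha \leq C' \alpha^{2(n+m)}$. Plugging this back absorbs the polynomial loss and leaves rapid decay with exponent $-s$, at which point Proposition~\ref{prop:charac_smoothness_fourier_proj}(1) concludes that $f \in \cinfty(T \times G)$.
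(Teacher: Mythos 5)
Your proposal is correct and follows essentially the same route as the paper: both directions reduce to Proposition~\ref{prop:charac_smoothness_fourier_proj} via the orthogonal decomposition~\eqref{eq:Fourier_totalfrompartials}, with the multiplicity $|\mathcal{P}(\alpha)|$ controlled by $\dim E_\alpha$ and Weyl's formula~\eqref{eq:weyl}. Your explicit check that $\mu+\lambda\in\sigma(\Delta)$ (so every pair lies in some $\mathcal{P}(\alpha)$) and the sharper factor $|\mathcal{P}(\alpha)|^{1/2}$ are only cosmetic refinements of the paper's argument.
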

The next two corollaries of Proposition~\ref{prop:pw_product} are fundamental to our approach later on. Before we state (and prove) them, we will need the following remark, which basically reads: ``all natural definitions of the $L^2$ inner product on $L^2(T) \otimes E^G_\lambda$ are the same''.

\begin{Rem} \label{rem:equivL2lambda} For $f, g \in \cinfty(T; E^G_\lambda)$ given by
  \begin{align*}
    f &= \sum_{i = 1}^{d^G_\lambda} f_i \otimes \phi^\lambda_i, \quad f_i \in \cinfty(T), \\
    g &= \sum_{i' = 1}^{d^G_\lambda} g_{i'} \otimes \phi^\lambda_{i'}, \quad g_{i'} \in \cinfty(T),
  \end{align*}
  we have by Proposition~\ref{prop:fubini}
  \begin{align*}
    \langle f, g \rangle_{L^2(T \times G)} = \int_{T \times G} \sum_{i, i' = 1}^{d^G_\lambda} f_i(t)\phi^\lambda_i(x) \overline{g_{i'}(t)\phi^\lambda_{i'}(x)} \dd V(t,x) = \sum_{i = 1}^{d^G_\lambda} \langle f_i, g_i \rangle_{L^2(T)}.
  \end{align*}
 Moreover, we have
  \begin{align*}
    \mathcal{F}^T_\mu (f) &= \sum_{i = 1}^{d^G_\lambda}  \mathcal{F}^T_\mu(f_i) \otimes \phi^\lambda_i, \quad \forall \mu \in \sigma(\Delta_T)
  \end{align*}
  hence
  \begin{align*}
    \| f \|_{L^2(T \times G)}^2 = \sum_{i = 1}^{d^G_\lambda} \| f_i \|_{L^2(T)}^2 = \sum_{i = 1}^{d^G_\lambda} \sum_{\mu \in \sigma(\Delta_T)} \|  \mathcal{F}^T_\mu(f_i) \|_{L^2(T)}^2 = \sum_{\mu \in \sigma(\Delta_T)} \| \mathcal{F}^T_\mu(f) \|_{L^2(T \times G)}^2.
  \end{align*}
\end{Rem}

\begin{Cor} \label{cor:partial_smoothness} If $f \in \cinfty(T \times G)$ then for every $s > 0$ there exists $C > 0$ such that
  \begin{align}
    \| \mathcal{F}^G_\lambda(f) \|_{L^2(T \times G)} &\leq C (1 + \lambda)^{-s}, \quad \forall \lambda \in \sigma(\Delta_G). \label{eq:partial_smoothness}
  \end{align}
  \begin{proof} By the computations done in Remark~\ref{rem:equivL2lambda} we have
    \begin{align*}
      \| \mathcal{F}^G_\lambda (f) \|_{L^2(T \times G)}^2 = \sum_{\mu \in \sigma(\Delta_T)} \| \mathcal{F}^T_\mu \mathcal{F}^G_\lambda (f) \|_{L^2(T \times G)}^2.
    \end{align*}
    By Proposition~\ref{prop:pw_product} for each $s > 0$ there exists $C > 0$ such that
    \begin{align*}
      \| \mathcal{F}^T_{\mu} \mathcal{F}^G_{\lambda} (f) \|_{L^2(T \times G)} &\leq C (1 + \mu + \lambda)^{-s -n}, \quad \forall (\mu, \lambda) \in \sigma(\Delta_T) \times \sigma(\Delta_G)
    \end{align*}
    where $n = \dim T$, and so
    \begin{align*}
      \| \mathcal{F}^G_\lambda (f) \|_{L^2(T \times G)}^2 \leq \sum_{\mu \in \sigma(\Delta_T)} C^2 (1 + \mu + \lambda)^{-2s-2n} \leq C^2 (1 + \lambda)^{-2s} \sum_{\mu \in \sigma(\Delta_T)} (1 + \mu)^{-2n}
    \end{align*}
    where the last series converges thanks to Weyl's asymptotic formula~\eqref{eq:weyl} for $\Delta_T$. 
  \end{proof}
\end{Cor}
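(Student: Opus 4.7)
The plan is to reduce the claim to the double-partial Fourier decay of Proposition~\ref{prop:pw_product} via a Parseval identity along the $T$-direction.

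First, I would note that $\mathcal{F}^G_\lambda(f) \in \cinfty(T; E^G_\lambda)$, and that expanding each $T$-component of this vector-valued function in eigenfunctions of $\Delta_T$ produces exactly the blocks $\mathcal{F}^T_\mu \mathcal{F}^G_\lambda(f)$. The Parseval identity recorded in Remark~\ref{rem:equivL2lambda} thus yields
\begin{align*}
  \| \mathcal{F}^G_\lambda(f) \|_{L^2(T \times G)}^2 &= \sum_{\mu \in \sigma(\Delta_T)} \| \mathcal{F}^T_\mu \mathcal{F}^G_\lambda(f) \|_{L^2(T \times G)}^2.
\end{align*}

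Next, I would apply Proposition~\ref{prop:pw_product} with exponent $s+n$ in place of $s$, where $n \dfn \dim T$, producing a constant $C>0$ such that
\begin{align*}
  \| \mathcal{F}^T_\mu \mathcal{F}^G_\lambda(f) \|_{L^2(T \times G)} &\leq C(1+\mu+\lambda)^{-s-n}, \quad \forall (\mu, \lambda) \in \sigma(\Delta_T) \times \sigma(\Delta_G).
\end{align*}
Using the elementary factorization $(1+\mu+\lambda)^{s+n} \geq (1+\lambda)^{s}(1+\mu)^{n}$ to separate the variables, then squaring and summing over $\mu$, I would arrive at
\begin{align*}
  \| \mathcal{F}^G_\lambda(f) \|_{L^2(T \times G)}^2 &\leq C^2 (1+\lambda)^{-2s} \sum_{\mu \in \sigma(\Delta_T)} (1+\mu)^{-2n}.
\end{align*}

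The only substantive obstacle is convergence of the tail series $\sum_{\mu} (1+\mu)^{-2n}$. For this I would invoke Weyl's asymptotic formula~\eqref{eq:weyl} applied to $\Delta_T$: since $\dim E^T_\mu \geq 1$ for every $\mu \in \sigma(\Delta_T)$, the series is dominated by $\sum_\mu (\dim E^T_\mu)(1+\mu)^{-2n}$, which is finite (the single term at $\mu = 0$ contributing harmlessly, and the bound $(1+\mu)^{-2n} \leq \mu^{-2n}$ for $\mu > 0$ bringing us into the exact form~\eqref{eq:weyl}). Taking square roots and absorbing the resulting finite constant into $C$ yields the desired bound.
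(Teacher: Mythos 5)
Your proof is correct and follows the same route as the paper: Parseval along the $T$-direction via Remark~\ref{rem:equivL2lambda}, the double-partial estimate of Proposition~\ref{prop:pw_product} with exponent $s+n$, the factorization $(1+\mu+\lambda)^{s+n}\geq(1+\lambda)^{s}(1+\mu)^{n}$, and convergence of $\sum_\mu(1+\mu)^{-2n}$ from Weyl's formula~\eqref{eq:weyl}. Your extra remark spelling out why $\dim E^T_\mu\geq 1$ and the $\mu=0$ term make the Weyl bound applicable is a fine, if minor, clarification of a step the paper leaves implicit.
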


\begin{Cor} \label{cor:partial_smoothness_converse} If $f \in \D'(T \times G)$ is such that
  \begin{enumerate}
  \item for every $s > 0$ there exists $C > 0$ such that~\eqref{eq:partial_smoothness} holds and
  \item for every $s' > 0$ there exist $C' > 0$ and $\theta \in (0, 1)$ such that
    \begin{align}
      \| \mathcal{F}^T_\mu \mathcal{F}^G_\lambda (f) \|_{L^2(T \times G)} &\leq C' (1 + \mu + \lambda)^{-s'}, \quad \forall (\mu, \lambda) \in \Lambda_\theta \label{eq:conethetaestimate}
    \end{align}
    where
    \begin{align}
      \Lambda_\theta &\dfn \{ (\mu, \lambda) \in \sigma(\Delta_T) \times \sigma(\Delta_G) \st (1 + \lambda) \leq (1 + \mu)^\theta \}. \label{eq:Atheta}
    \end{align}
  \end{enumerate}
  Then $f \in \cinfty(T \times G)$.
  \begin{proof} Let $\Lambda_\theta^c \sset \sigma(\Delta_T) \times \sigma(\Delta_G)$ denote the complement of $\Lambda_\theta$. For $(\mu, \lambda) \in \Lambda_\theta^c$ we have
    \begin{align*}
      1 + \mu + \lambda < (1 + \lambda)^{\frac{1}{\theta}} + \lambda \leq (1 + \lambda)^{1 + \frac{1}{\theta}} \leq (1 + \lambda)^{\frac{2}{\theta}}
    \end{align*}
    since $1/\theta > 1$. Therefore, given $s' > 0$ we define $s \dfn 2 \theta^{-1} s'$, hence for $(\mu, \lambda) \in  \Lambda_\theta^c$ we have
    \begin{align*}
      (1 + \lambda)^{-s} \leq  (1 + \mu + \lambda)^{-\frac{\theta s}{2}} = (1 + \mu + \lambda)^{-s'}.
    \end{align*}
    Let then $C, C' > 0$ be such that~\eqref{eq:partial_smoothness} and~\eqref{eq:conethetaestimate} hold, hence
    \begin{align*}
      \| \mathcal{F}^T_\mu \mathcal{F}^G_\lambda (f) \|_{L^2(T \times G)} &\leq
      \begin{cases}
        C (1 + \mu + \lambda)^{-s'}, &\text{in $\Lambda^c_\theta$}, \\ 
        C' (1 + \mu + \lambda)^{-s'}, &\text{in $\Lambda_\theta$}.
      \end{cases}
    \end{align*}    
    Combining both estimates, it follows from Proposition~\ref{prop:pw_product} that $f \in \cinfty(T \times G)$.
  \end{proof}
\end{Cor}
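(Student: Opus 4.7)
The plan is to derive the rapid decay estimate required by Proposition~\ref{prop:pw_product}, namely that for every $s>0$ there exists $C>0$ with
\begin{align*}
\| \mathcal{F}^T_\mu \mathcal{F}^G_\lambda (f) \|_{L^2(T \times G)} &\leq C (1+\mu+\lambda)^{-s}, \quad \forall (\mu,\lambda) \in \sigma(\Delta_T) \times \sigma(\Delta_G),
\end{align*}
by splitting $\sigma(\Delta_T) \times \sigma(\Delta_G)$ into the cone $\Lambda_\theta$ and its complement $\Lambda_\theta^c$, and handling each piece using one of the two hypotheses.

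On $\Lambda_\theta$ there is nothing to do: hypothesis~(2) provides the needed rapid decay directly, with $s' = s$.

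On $\Lambda_\theta^c$ the idea is to bootstrap from decay in $1+\lambda$ alone to decay in $1+\mu+\lambda$. First, from Remark~\ref{rem:equivL2lambda} applied to (the $L^2$ object) $\mathcal{F}^G_\lambda(f)$ one has
\begin{align*}
\| \mathcal{F}^T_\mu \mathcal{F}^G_\lambda(f) \|_{L^2(T \times G)} &\leq \| \mathcal{F}^G_\lambda(f) \|_{L^2(T \times G)},
\end{align*}
so hypothesis~(1) translates into an estimate for the individual coefficients in $1+\lambda$. Second, by definition of $\Lambda_\theta^c$ one has $1+\mu < (1+\lambda)^{1/\theta}$, and since $\theta \in (0,1)$ gives $1/\theta > 1$, a short computation yields $1+\mu+\lambda \leq (1+\lambda)^{2/\theta}$; equivalently, $(1+\lambda)^{-s} \leq (1+\mu+\lambda)^{-\theta s/2}$ on $\Lambda_\theta^c$. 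Thus, applying hypothesis~(1) with exponent $2s/\theta$ produces the required bound $(1+\mu+\lambda)^{-s}$ on the complement.

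Putting the two regimes together yields the rapid decay needed to invoke Proposition~\ref{prop:pw_product}, whence $f \in \cinfty(T \times G)$. The only potentially delicate point is the geometric conversion on $\Lambda_\theta^c$, where one must be careful that $1/\theta > 1$ (so that the crude bound $1+\mu+\lambda \leq 2(1+\lambda)^{1/\theta}$ can be absorbed into $(1+\lambda)^{2/\theta}$); the rest of the argument is bookkeeping of constants and exponents. The hypothesis $\theta \in (0,1)$ in~\eqref{eq:Atheta} is exactly what makes this conversion possible, which explains why a full cone $(1+\mu+\lambda) \leq C(1+\mu)$ would be insufficient while our thinner one suffices.
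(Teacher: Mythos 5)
Your argument is correct and follows essentially the same route as the paper: split $\sigma(\Delta_T)\times\sigma(\Delta_G)$ into $\Lambda_\theta$ and $\Lambda_\theta^c$, use hypothesis (2) directly on the cone, and on the complement convert $(1+\lambda)$-decay into $(1+\mu+\lambda)$-decay via $1+\mu+\lambda\leq(1+\lambda)^{2/\theta}$ (your explicit use of Remark~\ref{rem:equivL2lambda} to bound $\|\mathcal{F}^T_\mu\mathcal{F}^G_\lambda(f)\|$ by $\|\mathcal{F}^G_\lambda(f)\|$ is left implicit in the paper, and the factor $2$ in your crude bound only costs a harmless constant).
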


Before we end this section we will prove a result about LPDOs which commute with one of the partial Laplace-Beltrami operators on $T \times G$: such LPDOs will also commute with the partial Fourier projection map associated to the corresponding factor. This is a key property that all of our operators of interest in the forthcoming sections will enjoy.
\begin{Prop} \label{prop:invopscommft} Let $P$ be a LPDO in $T \times G$ which commutes with $\Delta_G^\sharp$. If $u \in \D'(T \times G)$ then $\mathcal{F}^G_\lambda (Pu) = P \mathcal{F}^G_\lambda(u)$ for every $\lambda \in \sigma(\Delta_G)$.
  \begin{proof} We will be content to prove the assertion when $u$ is smooth. First, notice that $P$ maps $\cinfty(T; E_\lambda^G)$ to itself: indeed, if $f \in \cinfty(T; E_\lambda^G)$ then by~\eqref{eq:eigenspaces_partial}
    \begin{align*}
      \Delta_G^\sharp f = \lambda f &\Longrightarrow \Delta_G^\sharp (Pf) = P(\Delta_G^\sharp f) = \lambda (Pf)
    \end{align*}
    from which we conclude that $Pf \in \cinfty(T; E_\lambda^G)$. We claim that $P^*$ -- the formal adjoint of $P$ -- also commutes with $\Delta_G^\sharp$: for $f, g \in \cinfty(T \times G)$ we have $\langle \Delta_G^\sharp f, g \rangle_{L^2(T \times G)} = \langle f, \Delta_G^\sharp g \rangle_{L^2(T \times G)}$ (check this first for $f, g \in \cinfty(T) \otimes \cinfty(G)$ using~\eqref{eq:Psharp_on_tensors} and Proposition~\ref{prop:fubini} and then use a density argument) hence
    \begin{align*}
      \left \langle P^* \Delta_G^\sharp f, g \right \rangle_{L^2(T \times G)} = \left \langle f, \Delta_G^\sharp P g \right \rangle_{L^2(T \times G)} = \left \langle f, P \Delta_G^\sharp g \right \rangle_{L^2(T \times G)} = \left \langle \Delta_G^\sharp P^* f, g \right \rangle_{L^2(T \times G)}
    \end{align*}
    and since this holds for all $f, g \in \cinfty(T \times G)$ our claim follows. In particular, $P^*$ also preserves $\cinfty(T; E_\lambda^G)$ for each $\lambda \in \sigma(\Delta_G)$.

    Now for $u \in \cinfty(T \times G)$ we have, for all $\psi \in \cinfty(T; E_\lambda^G)$,
    \begin{align*}
      \left \langle \mathcal{F}_\lambda^G (Pu), \psi \right \rangle_{L^2(T \times G)} = \left \langle Pu, \psi \right \rangle_{L^2(T \times G)} = \left \langle u, P^* \psi \right \rangle_{L^2(T \times G)} = \left \langle \mathcal{F}_\lambda^G (u), P^*\psi \right \rangle_{L^2(T \times G)}
    \end{align*}
    thanks to~\eqref{eq:charac_Fourier_part}: notice that in the last equality we used that $P^*\psi \in \cinfty(T; E_\lambda^G)$. After a final transposition we conclude that
    \begin{align*}
      \left \langle \mathcal{F}_\lambda^G (Pu), \psi \right \rangle_{L^2(T \times G)} &= \left \langle P \mathcal{F}_\lambda^G (u), \psi \right \rangle_{L^2(T \times G)}, \quad \forall \psi \in \cinfty(T; E_\lambda^G),
    \end{align*}
    which yields our conclusion since both $\mathcal{F}_\lambda^G (Pu)$ and $P \mathcal{F}_\lambda^G (u)$ belong to $\cinfty(T; E_\lambda^G)$.
  \end{proof}
\end{Prop}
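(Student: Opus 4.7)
The plan is to deduce the identity $\mathcal{F}^G_\lambda(Pu) = P\mathcal{F}^G_\lambda(u)$ from the variational characterization~\eqref{eq:charac_Fourier_part}, by exploiting the fact that both $P$ and its formal adjoint $P^*$ stabilize the partial eigenspace $\cinfty(T; E_\lambda^G)$. Following the remark in the statement, I would be content to treat the case $u \in \cinfty(T \times G)$, since the distributional case is then obtained by duality against test sections of the dual bundle along the lines of the construction of $\mathcal{F}_\lambda^G$ on $\D'(T\times G)$ given before Proposition~\ref{prop:pw_product}.

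\textbf{Step 1: $P$ preserves $\cinfty(T;E_\lambda^G)$.} By~\eqref{eq:eigenspaces_partial}, the space $\cinfty(T; E_\lambda^G)$ is precisely the $\lambda$-eigenspace of $\Delta_G^\sharp$ acting on $\cinfty(T\times G)$. Since $P$ commutes with $\Delta_G^\sharp$ by hypothesis, for any $f \in \cinfty(T; E_\lambda^G)$ we have $\Delta_G^\sharp(Pf) = P(\Delta_G^\sharp f) = \lambda\, Pf$, so $Pf$ lies in the same eigenspace.

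\textbf{Step 2: $P^*$ also commutes with $\Delta_G^\sharp$.} Here the key subordinate fact is that $\Delta_G^\sharp$ is formally self-adjoint on $L^2(T \times G)$: on pure tensors $\psi \otimes \phi$ this is immediate from~\eqref{eq:Psharp_on_tensors}, Proposition~\ref{prop:fubini} and the self-adjointness of $\Delta_G$ on $L^2(G)$, and then a density argument (using that $\cinfty(T) \otimes \cinfty(G)$ is dense in $\cinfty(T\times G)$ for the strong topology) extends the identity to all smooth functions. Taking adjoints in $P \Delta_G^\sharp = \Delta_G^\sharp P$ then yields $\Delta_G^\sharp P^* = P^* \Delta_G^\sharp$, and Step~1 applied to $P^*$ shows that $P^*$ also stabilizes $\cinfty(T; E_\lambda^G)$.

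\textbf{Step 3: Identification via~\eqref{eq:charac_Fourier_part}.} Given any $\psi \in \cinfty(T; E_\lambda^G)$, Step~2 ensures $P^*\psi \in \cinfty(T; E_\lambda^G)$, so~\eqref{eq:charac_Fourier_part} can be applied to $\mathcal{F}_\lambda^G(u)$ against $P^*\psi$. The computation
\[
\langle \mathcal{F}_\lambda^G(Pu),\psi\rangle_{L^2(T\times G)} = \langle Pu,\psi\rangle_{L^2(T\times G)} = \langle u,P^*\psi\rangle_{L^2(T\times G)} = \langle \mathcal{F}_\lambda^G(u),P^*\psi\rangle_{L^2(T\times G)} = \langle P\mathcal{F}_\lambda^G(u),\psi\rangle_{L^2(T\times G)}
\]
then gives the desired inner-product identity for all test $\psi$. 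Since by Step~1 the element $P\mathcal{F}_\lambda^G(u)$ lies in $\cinfty(T; E_\lambda^G)$, the uniqueness clause in~\eqref{eq:charac_Fourier_part} forces $\mathcal{F}_\lambda^G(Pu) = P\mathcal{F}_\lambda^G(u)$.

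The only step with any real content is Step~2, whose subtlety is entirely the self-adjointness of $\Delta_G^\sharp$; everything else is a bookkeeping consequence of~\eqref{eq:eigenspaces_partial} and~\eqref{eq:charac_Fourier_part}. For general $u \in \D'(T\times G)$, the same argument runs verbatim once one identifies $\mathcal{F}_\lambda^G(u)$ with the transpose of the inclusion $\cinfty(T; E_\lambda^G) \hookrightarrow \cinfty(T\times G)$ and uses that, $\Delta_G$ being a real operator, $P^*$ also stabilizes $\cinfty(T; (E_\lambda^G)^*)$ under the Hermitian identification of $(E_\lambda^G)^*$ with $E_\lambda^G$.
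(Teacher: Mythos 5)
Your proposal is correct and follows essentially the same route as the paper: show $P$ (and, via self-adjointness of $\Delta_G^\sharp$ and taking adjoints, also $P^*$) preserves $\cinfty(T;E_\lambda^G)$, then transfer the identity through the characterization~\eqref{eq:charac_Fourier_part}. The paper likewise contents itself with the smooth case, so there is nothing substantive to add.
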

\section{A class of sublaplacians on product manifolds} \label{sec:sublaplacians}

From now on we will assume some extra structure in the environment postulated in the previous sections: namely, $G$ will be a Lie group (with $\dim G = m$), while $T$ will remain a smooth manifold (with $\dim T = n$), both of them compact, connected and oriented. We impose no conditions on the Riemannian metric on $T$, but will require the one on $G$ to be $\ad$-invariant~\eqref{eq:adinvariantmetric}. We denote by $\gr{g}$ the Lie algebra of $G$.


Let $\gr{a}: T \rarr \gr{g}$ be a smooth map. If $\vv{X}_1, \ldots, \vv{X}_m \in \gr{g}$ is a basis of left-invariant vector fields then
\begin{align*}
  \gr{a}(t) &= \sum_{j = 1}^m a_j(t) \vv{X}_j, \quad t \in T,
\end{align*}
where $a_1, \ldots, a_m \in \cinfty(T; \R)$ are uniquely determined. We thus regard $\gr{a}$ as a first-order LPDO on $T \times G$, which we may sometimes write $\gr{a}(t,\vv{X})$ when we want to stress this point of view. Notice that
\begin{align*}
  \gr{a}(t, \vv{X}) (\psi \otimes \phi) &= \sum_{j = 1}^m (a_j \psi) \otimes (\vv{X}_j \phi), \quad \forall \psi \in \D'(T), \ \phi \in \D'(G),
\end{align*}
hence in particular $\gr{a}(t, \vv{X}) (\psi \otimes 1_G) = 0$ for every $\psi \in \D'(T)$.

Now we introduce the class of LPDOs on $T \times G$ which is the main theme of the present work. Define
\begin{align}
  P &\dfn Q^\sharp - \sum_{\ell = 1}^N \left( \gr{a}_\ell(t, \vv{X}) + \vv{W}_\ell^\sharp \right)^2 \label{eq:Pdef}
\end{align}
where $\gr{a}_1, \ldots, \gr{a}_N: T \rarr \gr{g}$ are smooth maps, $\vv{W}_1, \ldots, \vv{W}_N$ are real, smooth vector fields on $T$ and $Q$ is a real, positive semidefinite LPDO on $T$ -- meaning that $\langle Q \psi, \psi \rangle_{L^2(T)} \geq 0$ for every $\psi \in \cinfty(T)$ -- which is a wildcard in our model: we will slowly add hypotheses to it, but for now we will assume that
\begin{align}
  \tilde{P} &\dfn Q - \sum_{\ell = 1}^N \vv{W}_\ell^2 \label{eq:tildeP}
\end{align}
is a second-order LPDO on $T$ that kills constants (i.e.~has no zero order term). The main examples we will explore afterwards are $Q = \Delta_T$ and $Q = 0$. 
Our aim in this work is to study necessary and sufficient conditions for an operator $P$ as above to be \emph{globally hypoelliptic}, or~$\mathrm{(GH)}$ for short, in $T \times G$:
\begin{align*}
  \forall u \in \D'(T \times G), \ Pu \in \cinfty(T \times G) &\Longrightarrow u \in \cinfty(T \times G).
\end{align*}

Since $\gr{a}_1, \ldots, \gr{a}_N: T \rarr \gr{g}$ are smooth, for each $\ell \in \{1, \ldots, N\}$ we may write
\begin{align}
  \mathfrak{a}_\ell(t) &= \sum_{j = 1}^m a_{\ell j}(t) \vv{X}_j, \quad t \in T, \label{eq:aellcoordinates}
\end{align}
with $a_{\ell 1}, \ldots, a_{\ell m} \in \cinfty(T; \R)$. Then given $\psi \in \D'(T)$ and $\phi \in \D'(G)$ we have, unwinding the square in the definition of $P$,
\begin{align}
  P(\psi \otimes \phi) &= (\tilde{P}\psi) \otimes \phi - \sum_{\ell = 1}^N \left( \sum_{j,j' = 1}^m (a_{\ell j'} a_{\ell j} \psi) \otimes (\vv{X}_{j'} \vv{X}_j \phi) + \sum_{j = 1}^m \left( (2 a_{\ell j} \vv{W}_\ell + \vv{W}_\ell a_{\ell j}) \psi \right) \otimes (\vv{X}_j \phi) \right). \label{eq:Pontensors}
\end{align}
Roughly speaking, $P$ has ``separated variables'' with ``constant coefficients'' on $G$, and hence behaves nicely under partial the Fourier projection maps on that factor. Rigorously, operators such as $Q^\sharp$ and $\vv{W}_\ell^\sharp$ commute with $\Delta_G^\sharp$, as they act on independent variables, but so does $\gr{a}_\ell(t, \vv{X})$ since each $\vv{X}_j$ commutes with $\Delta_G$ (as pointed out at the end of Section~\ref{sec:introLG}). Thus $P$ also commutes with $\Delta_G^\sharp$; to all of them, Proposition~\ref{prop:invopscommft} applies.

On time, we point out the following energy identity, which will be fundamental later on. Its proof is purely computational, and we leave it to the reader.
\begin{Lem} \label{lem:energy_identity} Let $P$ be as in~\eqref{eq:Pdef}. If we further assume that $\vv{W}_1, \ldots, \vv{W}_N$ are skew-symmetric on $T$ then for each $\lambda \in \sigma(\Delta_G)$ we have
  \begin{align*}
    \left \langle P \psi, \psi \right \rangle_{L^2(T \times G)} &= \left \langle Q^\sharp \psi, \psi \right \rangle_{L^2(T \times G)} + \sum_{\ell = 1}^N \left \| \vv{Y}_\ell \psi \right \|_{L^2(T \times G)}^2, \quad \forall \psi \in \cinfty(T; E_\lambda^G),
  \end{align*}
  where $\vv{Y}_\ell \dfn \gr{a}_\ell(t, \vv{X}) + \vv{W}_\ell^\sharp$ for $\ell \in \{1, \ldots, N\}$.
\end{Lem}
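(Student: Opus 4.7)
The plan is to exploit the very simple structure of $P$: by construction $P = Q^\sharp - \sum_{\ell} \vv{Y}_\ell^2$, so the identity reduces to showing
\[
\bigl\langle -\vv{Y}_\ell^2 \psi, \psi \bigr\rangle_{L^2(T\times G)} = \bigl\| \vv{Y}_\ell \psi \bigr\|_{L^2(T\times G)}^2
\]
for each $\ell$, which in turn will follow once we know that each $\vv{Y}_\ell$ is formally skew-symmetric on $\cinfty(T\times G)$ with respect to the product inner product. Once this is established, summing over $\ell$ and pairing with the trivially handled $\langle Q^\sharp \psi, \psi \rangle$-term finishes the lemma; note that the restriction $\psi \in \cinfty(T;E_\lambda^G)$ is not even needed for the computation to go through, so we will work directly with arbitrary smooth $\psi$ on $T \times G$.

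The work therefore concentrates on checking skew-symmetry of $\vv{Y}_\ell = \gr{a}_\ell(t, \vv{X}) + \vv{W}_\ell^\sharp$. First, $\vv{W}_\ell^\sharp$ is skew-symmetric on $T \times G$ because $\vv{W}_\ell$ is skew-symmetric on $T$ by hypothesis; this transfers to the lift via Proposition~\ref{prop:fubini} applied to $\langle \vv{W}_\ell^\sharp f, g\rangle_{L^2(T\times G)}$ after integrating out the $G$ variable first (more precisely, check the identity on tensor products $\psi \otimes \phi$ using \eqref{eq:Psharp_on_tensors} and extend by density/continuity in $L^2$). Second, write $\gr{a}_\ell(t, \vv{X}) = \sum_{j=1}^m a_{\ell j}(t)\vv{X}_j$ as in \eqref{eq:aellcoordinates}. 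Since $\vv{X}_j$ is left-invariant on $G$ and the metric on $G$ is $\ad$-invariant, each $\vv{X}_j$ is skew-symmetric on $G$, hence $\vv{X}_j$ viewed as an operator on $T \times G$ is skew-symmetric (again by Fubini). Because $a_{\ell j}$ depends only on $t$ and $\vv{X}_j$ differentiates only in the $G$-direction, the two commute as operators on $T \times G$, and multiplication by a real-valued function is self-adjoint, so
\[
(a_{\ell j}(t)\vv{X}_j)^* = \vv{X}_j^*\, a_{\ell j}(t) = -a_{\ell j}(t)\vv{X}_j.
\]
Summing over $j$ shows that $\gr{a}_\ell(t, \vv{X})$ is skew-symmetric, and adding $\vv{W}_\ell^\sharp$ preserves this.

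With $\vv{Y}_\ell^* = -\vv{Y}_\ell$ on $\cinfty(T \times G)$, we obtain
\[
\bigl\langle -\vv{Y}_\ell^2 \psi, \psi \bigr\rangle_{L^2(T\times G)} = -\bigl\langle \vv{Y}_\ell \psi, \vv{Y}_\ell^* \psi \bigr\rangle_{L^2(T\times G)} = \bigl\langle \vv{Y}_\ell \psi, \vv{Y}_\ell \psi \bigr\rangle_{L^2(T\times G)} = \bigl\| \vv{Y}_\ell \psi \bigr\|_{L^2(T\times G)}^2,
\]
and summing yields the stated identity. There is no real obstacle here: the only point one has to be slightly careful about is the bookkeeping that separates $t$- and $x$-dependencies when integrating by parts — in particular, that $\vv{X}_j$ commutes with multiplication by $a_{\ell j}(t)$ because they act in independent variables — and the standard reduction to tensor products plus density (as used in Proposition~\ref{prop:delta12}) to transfer identities from $\cinfty(T)\otimes \cinfty(G)$ to all of $\cinfty(T\times G)$. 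The computation is exactly the one alluded to in the statement of the lemma.
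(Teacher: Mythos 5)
Your proof is correct: the paper omits the argument as ``purely computational'', and what you supply --- formal skew-symmetry of each $\vv{Y}_\ell$ on $T\times G$, deduced from the skew-symmetry of $\vv{W}_\ell$ on $T$, the skew-symmetry of the left-invariant fields $\vv{X}_j$ on $G$, and the fact that $\vv{X}_j$ annihilates the $t$-dependent coefficients $a_{\ell j}$, followed by one integration by parts per square --- is exactly the intended computation (the Fubini/tensor-product-plus-density transfer you invoke is the same device the paper uses, e.g.\ in Proposition~\ref{prop:invopscommft}). Your remark that the restriction to $\psi \in \cinfty(T; E_\lambda^G)$ is not needed for the identity is also accurate; that hypothesis merely reflects how the lemma is applied later.
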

\subsection{Main Results}

We start by discussing necessary conditions for global hypoellipticity of $P$ in~\eqref{eq:Pdef}. 
\begin{Prop} \label{prop:first_nec_condition} If $P$ is~$\mathrm{(GH)}$ in $T \times G$ then $\tilde{P}$ is~$\mathrm{(GH)}$ in $T$.
  \begin{proof} Let $u \in \D'(T)$ be such that $\tilde{P}u \in \cinfty(T)$. Then by~\eqref{eq:Pontensors} we have that $P (u \otimes 1_G) = (\tilde{P}u) \otimes 1_G$ is smooth on $T \times G$, hence by hypothesis $u \otimes 1_G \in \cinfty(T \times G)$ -- which can only happen if $u \in \cinfty(T)$.
  \end{proof}
\end{Prop}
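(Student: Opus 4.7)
The plan is to test global hypoellipticity of $\tilde P$ against an arbitrary distribution $u\in\D'(T)$ with $\tilde P u\in\cinfty(T)$ by lifting $u$ to $T\times G$ via the tensor product with the constant function $1_G$. The right object to consider is therefore $u\otimes 1_G\in\D'(T\times G)$, and the key observation is that the lift trivializes the $G$-part of $P$.

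More concretely, I would apply formula \eqref{eq:Pontensors} with $\psi=u$ and $\phi=1_G$. Since each $\vv{X}_j$ is a left-invariant vector field on $G$, one has $\vv{X}_j 1_G=0$, so every term in the double sum $\sum_{j,j'}(a_{\ell j'}a_{\ell j}u)\otimes(\vv{X}_{j'}\vv{X}_j 1_G)$ and every term in $\sum_j((2a_{\ell j}\vv{W}_\ell+\vv{W}_\ell a_{\ell j})u)\otimes(\vv{X}_j 1_G)$ vanishes. Consequently \eqref{eq:Pontensors} collapses to $P(u\otimes 1_G)=(\tilde P u)\otimes 1_G$.

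The hypothesis $\tilde P u\in\cinfty(T)$ then gives $P(u\otimes 1_G)\in\cinfty(T\times G)$. Since $P$ is assumed $\mathrm{(GH)}$ in $T\times G$, this forces $u\otimes 1_G\in\cinfty(T\times G)$. Finally, smoothness of $u\otimes 1_G$ on $T\times G$ implies smoothness of $u$ on $T$ (e.g.\ by restricting to any slice $T\times\{x_0\}$, or by pairing with a test function on $G$ that is not identically zero). This completes the argument.

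There is essentially no real obstacle here: the whole proposition is a one-line consequence of the fact that the coefficients in the $\vv{X}$-part of $P$ annihilate constants on $G$. The only thing worth being careful about is the last implication $u\otimes 1_G\in\cinfty(T\times G)\Rightarrow u\in\cinfty(T)$, which, although obvious, is the reason the proof works; it would fail if we had replaced $1_G$ by an arbitrary distribution on $G$.
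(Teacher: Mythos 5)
Your argument is correct and is essentially identical to the paper's proof: both apply~\eqref{eq:Pontensors} to $u\otimes 1_G$, observe that the left-invariant vector fields annihilate $1_G$ so that $P(u\otimes 1_G)=(\tilde{P}u)\otimes 1_G$ is smooth, and then invoke the global hypoellipticity of $P$ to conclude $u\otimes 1_G$, and hence $u$, is smooth. Your extra remarks (why the $\vv{X}_j$-terms vanish and why smoothness of $u\otimes 1_G$ forces smoothness of $u$) only make explicit what the paper leaves implicit.
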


Motivated by this remark, we shall be mostly concerned with the case when $\tilde{P}$ is an \emph{elliptic} operator in $T$, a simplifying assumption that will allow us to make use of microlocal methods. Now we come to our second necessary condition for global hypoellipticity of $P$.
\begin{Thm} \label{thm:PGHnecessaell} If $P$ is~$\mathrm{(GH)}$ in $T \times G$ then the following regularity condition holds:
  \begin{align}
    \forall u \in \D'(G), \ \mathfrak{a}_\ell(t,\vv{X})(1_T \otimes u) \in \cinfty(T \times G) \ \forall \ell \in \{1, \ldots, N\} &\Longrightarrow u \in \cinfty(G). \label{eq:reg_aa}
  \end{align}
\end{Thm}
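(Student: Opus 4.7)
\medskip

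\noindent\textbf{Proof plan.} Given $u \in \D'(G)$ satisfying the hypothesis in~\eqref{eq:reg_aa}, the plan is to lift $u$ to $T \times G$ as $v \dfn 1_T \otimes u \in \D'(T \times G)$, show that $Pv \in \cinfty(T \times G)$, and then invoke the global hypoellipticity of $P$ to conclude that $v$ is smooth, from which the smoothness of $u$ is immediate.

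The core of the argument is the observation that when $P$ acts on a distribution of the form $1_T \otimes u$, all the second-order contributions from $Q^\sharp$ and the $\vv{W}_\ell^\sharp$ terms vanish, so the only remaining ingredient is the assumed smoothness of $\mathfrak{a}_\ell(t,\vv{X})(1_T \otimes u)$. More precisely, $\vv{W}_\ell$ is a vector field on $T$, so $\vv{W}_\ell^\sharp v = 0$; and since $\tilde{P}$ kills constants and $\vv{W}_\ell^2 1_T = 0$, the operator $Q = \tilde{P} + \sum_\ell \vv{W}_\ell^2$ also kills constants, hence $Q^\sharp v = 0$. Writing $\vv{Y}_\ell \dfn \mathfrak{a}_\ell(t,\vv{X}) + \vv{W}_\ell^\sharp$, we therefore get $\vv{Y}_\ell v = \mathfrak{a}_\ell(t,\vv{X})\, v$, which is smooth on $T \times G$ by hypothesis.

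Consequently, expanding $P$ as in~\eqref{eq:Pdef},
\begin{align*}
Pv &= Q^\sharp v - \sum_{\ell = 1}^N \vv{Y}_\ell\bigl(\vv{Y}_\ell v\bigr) = -\sum_{\ell = 1}^N \vv{Y}_\ell\bigl(\mathfrak{a}_\ell(t,\vv{X})\, v\bigr),
\end{align*}
which is a finite sum of smooth differential operators applied to smooth functions, hence belongs to $\cinfty(T \times G)$. The global hypoellipticity of $P$ then gives $v = 1_T \otimes u \in \cinfty(T \times G)$. To finish, I would pick any $\phi \in \cinfty(T)$ with $\int_T \phi \, \dd V_T = 1$ and observe that the smooth function $x \mapsto \int_T \phi(t)\,v(t,x)\,\dd V_T(t)$ on $G$ represents $u$ as a distribution, so $u \in \cinfty(G)$.

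I do not foresee a substantive obstacle: the proof is essentially a bookkeeping exercise, with the only subtle point being the need to note that $Q$ inherits from $\tilde{P}$ the property of killing constants (thanks to $\vv{W}_\ell^2 1_T = 0$) so that $Q^\sharp v = 0$. Everything else is a direct consequence of the product structure of $v$ and the hypothesis on $u$.
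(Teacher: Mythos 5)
Your argument is correct, and at the decisive point it is genuinely simpler than the paper's proof. The paper first expands $P(1_T\otimes u)$ via the tensor formula~\eqref{eq:Pontensors}, which isolates the cross term $\sum_{\ell}\sum_{j}(\vv{W}_\ell a_{\ell j})\otimes(\vv{X}_j u)$; showing that this term is smooth is the whole difficulty there, and it is handled by introducing $\tilde{\gr{a}}_\ell(t)\dfn\sum_j(\vv{W}_\ell a_{\ell j})(t)\vv{X}_j$, proving via a Newton-quotient argument that $\ran\tilde{\gr{a}}_\ell\sset\Span_\R\ran\gr{a}_\ell$, and applying Lemma~\ref{lem:reg_aa} in both directions to conclude $\tilde{\gr{a}}_\ell(t,\vv{X})(1_T\otimes u)\in\cinfty(T\times G)$. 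You avoid all of this by not expanding the square: since $\vv{W}_\ell^\sharp(1_T\otimes u)=0$ and $Q^\sharp(1_T\otimes u)=0$ (the latter because $\tilde P$ kills constants and $\vv{W}_\ell^2 1_T=0$), you get $\vv{Y}_\ell(1_T\otimes u)=\gr{a}_\ell(t,\vv{X})(1_T\otimes u)$, smooth by hypothesis, and then $P(1_T\otimes u)=-\sum_\ell\vv{Y}_\ell\bigl(\gr{a}_\ell(t,\vv{X})(1_T\otimes u)\bigr)$ is a smooth vector field applied to a smooth function. The paper's troublesome cross term is exactly $\vv{W}_\ell^\sharp$ applied to that smooth function, so its smoothness is automatic in your formulation and no auxiliary lemma is needed. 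What the paper's longer route buys is the explicit structural information that $\tilde{\mathcal{L}}\sset\Span_\R\mathcal{L}$ (and it keeps Lemma~\ref{lem:reg_aa} in play, which is needed anyway for Proposition~\ref{Pro:66impliesLGH}); what yours buys is a shorter, more conceptual proof of the theorem itself. Your closing step — recovering $u$ by pairing the smooth function $1_T\otimes u$ with a test function $\phi\in\cinfty(T)$ of total integral one — is a fine way to pass from $1_T\otimes u\in\cinfty(T\times G)$ to $u\in\cinfty(G)$ (restricting to a slice $\{t_0\}\times G$ works just as well).
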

Its proof is not as simple: we postpone it to Section~\ref{sec:classofsystem}. Under additional conditions, we will see that the necessary conditions in Proposition~\ref{prop:first_nec_condition} and Theorem~\ref{thm:PGHnecessaell} are also sufficient. But first, let us restate condition~\eqref{eq:reg_aa} in terms of a system  of left-invariant vector fields on $G$. To do so, we must recall the notion of global hypoellipticity for such systems:
\begin{Def} \label{def:gh_systems} Let $M$ be a smooth, compact manifold as in Section~\ref{sec:preliminaries}. A family $\mathcal{L}$ of smooth vector fields on $M$ is said to be \emph{globally hypoelliptic} in $M$ -- $\mathrm{(GH)}$ for short -- if for every $u \in \D'(M)$ we have
  \begin{align*}
    \vv{L} u \in \cinfty(M), \ \forall \vv{L} \in \mathcal{L} &\Longrightarrow u \in \cinfty(M).
  \end{align*}
\end{Def}
From now on we denote by $\mathcal{L}$ the system of vector fields on $G$ defined as follows:
\begin{align}
  \mathcal{L} &\dfn \bigcup_{\ell = 1}^N \ran \mathfrak{a}_\ell \sset \gr{g}. \label{eq:sys_fromaell}
\end{align}
Thus a left-invariant vector field $\vv{L}$ belongs to $\mathcal{L}$ if and only if there exist $\ell \in \{1, \ldots, N\}$ and $t \in T$ such that $\vv{L} = \gr{a}_\ell(t)$. Moreover, for each $\ell \in \{1, \ldots, N\}$ we let
\begin{align}
  \mathcal{L}_\ell &\dfn \Span_\R \ran \mathfrak{a}_\ell \sset \gr{g}. \label{eq:Lell}
\end{align}
We will prove in Proposition~\ref{Pro:66impliesLGH} that condition~\eqref{eq:reg_aa} is equivalent to ask that $\mathcal{L}$ is~$\mathrm{(GH)}$ in $G$. In Section~\ref{sec:nsa_vectors} we explore in detail such condition when $G$ is a torus and equate it with the notion of non-simultaneous approximability of a collection of vectors, a Diophantine condition already known to be connected with global hypoellipticity of operators like~\eqref{eq:Pdef} when both $T$ and $G$ are tori~\cite{bfp17}.

When $Q = \Delta_T$, the Laplace-Beltrami operator in $T$, we can state our sufficiency result as follows:
\begin{Thm} \label{thm:thm15} Let
  \begin{align}\label{eq:PdefLaplaceBeltrami}
    P &= \Delta_T^\sharp - \sum_{\ell = 1}^N \left( \mathfrak{a}_\ell(t, \vv{X}) + \vv{W}_\ell^\sharp \right)^2
  \end{align}
 and suppose that $\vv{W}_1, \ldots, \vv{W}_N$ are skew-symmetric real vector fields in $T$. Assume moreover that:
  \begin{enumerate}
  \item \label{it:thm15_hyp1} For each given $\ell \in \{1, \ldots, N\}$, we have that $\mathfrak{a}_\ell(t_1), \mathfrak{a}_\ell(t_2)$ commute as vector fields in $G$, for any $t_1, t_2 \in T$. In other words, each $\mathcal{L}_\ell \sset \gr{g}$ as defined in~\eqref{eq:Lell} spans a commutative Lie subalgebra.
  \item \label{it:thm15_hyp2} The system $\mathcal{L} \sset \gr{g}$ in~\eqref{eq:sys_fromaell} is~$\mathrm{(GH)}$ in $G$.
  \end{enumerate}
  Then $P$ is~$\mathrm{(GH)}$ in $T \times G$. Furthermore, if $R$ is a LPDO in $T \times G$ of the form
  \begin{align}
    R &\dfn - \sum_{\kappa=1}^{M} \left( \gr{b}_\kappa(t, \vv{X}) + \vv{V}_\kappa^\sharp \right)^2 \label{eq:Rdef}
  \end{align}
  where $\vv{V}_1, \ldots, \vv{V}_M$ are skew-symmetric real vector fields in $T$ and $\gr{b}_{1}, \ldots, \gr{b}_{M} \in \cinfty(T; \mathfrak{g})$ do not necessarily satisfy the commutativity condition above, then $P_0 \dfn P + R$ is also~$\mathrm{(GH)}$ in $T \times G$. 
\end{Thm}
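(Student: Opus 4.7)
The strategy is to control the partial Fourier projections $u_\lambda := \mathcal{F}_\lambda^G u$ and then apply Corollary~\ref{cor:partial_smoothness_converse}. Fix $u \in \D'(T\times G)$ with $g := P_0 u \in \cinfty(T\times G)$, and for each $\lambda \in \sigma(\Delta_G)$ set $g_\lambda := \mathcal{F}_\lambda^G g$. Since each $\vv{X}_j$ commutes with $\Delta_G$ by the $\ad$-invariance of the metric~\eqref{eq:DeltaG_SS}, and $\Delta_T^\sharp$, $\vv{W}_\ell^\sharp$, $\vv{V}_\kappa^\sharp$ act only on the $t$-variable, $[P_0,\Delta_G^\sharp]=0$; Proposition~\ref{prop:invopscommft} yields $P_0 u_\lambda = g_\lambda$ in $\D'(T;E_\lambda^G)$. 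The principal symbol of $P_0$ acting on $\cinfty(T;E_\lambda^G)$ is $\bigl(|\tau|^2 + \sum_\ell\langle\vv{W}_\ell,\tau\rangle^2 + \sum_\kappa\langle\vv{V}_\kappa,\tau\rangle^2\bigr)\, I$, which is elliptic on $T$, so every $u_\lambda \in \cinfty(T;E_\lambda^G)$ by elliptic regularity.

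Condition~(2) of Corollary~\ref{cor:partial_smoothness_converse} (the cone estimate on $\Lambda_\theta$) follows from parameter-ellipticity in the $T$-direction: on $\Lambda_\theta$ with $\theta$ small enough, the matrix-valued zeroth-order perturbations $\gr{a}_\ell(t,\vv{X})$ and $\gr{b}_\kappa(t,\vv{X})$ have operator norms $O((1+\lambda)^{1/2})$ on $E_\lambda^G$ and are dominated by the principal term of order $\mu$ coming from $\Delta_T^\sharp$. Applying $\mathcal{F}_\mu^T$ to $P_0 u_\lambda = g_\lambda$ and using standard a priori estimates yields the desired decay of $\|\mathcal{F}_\mu^T u_\lambda\|$, since $g \in \cinfty$ makes $\|\mathcal{F}_\mu^T g_\lambda\|$ rapidly decreasing.

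The main step is condition~(1): rapid decay of $\|u_\lambda\|_{L^2(T\times G)}$ in $\lambda$. The obvious extension of Lemma~\ref{lem:energy_identity} to $P_0$ gives
\[
\|d_T u_\lambda\|^2 + \sum_\ell \|\vv{Y}_\ell u_\lambda\|^2 + \sum_\kappa \|\vv{Z}_\kappa u_\lambda\|^2 = \langle g_\lambda, u_\lambda\rangle \leq \|g_\lambda\|\,\|u_\lambda\|,
\]
with $\vv{Z}_\kappa := \gr{b}_\kappa(t,\vv{X}) + \vv{V}_\kappa^\sharp$; the non-negative $\vv{Z}_\kappa$-terms may be discarded, which explains why $R$ does not require the commutativity hypothesis. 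Hypothesis~\eqref{it:thm15_hyp1} makes $\{\gr{a}_\ell(t)|_{E_\lambda^G}\}_{t\in T}$, for each fixed $\ell$, a commuting family of skew-adjoint operators, hence simultaneously diagonalizable in an orthonormal basis of $E_\lambda^G$ with purely imaginary eigenvalues $i\omega_\ell^j(t,\lambda)$. Expanding $u_\lambda$ componentwise in this basis and using the pointwise inequality $|i\omega u|^2 \leq 2|(\vv{W}+i\omega)u|^2 + 2|\vv{W} u|^2$, combined with $\|\vv{W}_\ell u_\lambda\|^2 \lesssim \|d_T u_\lambda\|^2 \leq \langle g_\lambda, u_\lambda\rangle$, the energy estimate becomes
\[
\sum_\ell \int_T \|\gr{a}_\ell(t)\, u_\lambda(t)\|_{E_\lambda^G}^2 \,\dd V_T(t) \lesssim \|g_\lambda\|\,\|u_\lambda\|.
\]
The quantitative form of the GH of $\mathcal{L}$ provided by Proposition~\ref{Pro:66impliesLGH} supplies, for every $s>0$, a constant $C_s$ with $\|v\|_{E_\lambda^G} \leq C_s(1+\lambda)^s \max_{\vv{L}\in\mathcal{L}} \|\vv{L}v\|$ on the orthogonal complement of the common kernel $\mathscr{Z}_\lambda \subset E_\lambda^G$; applied pointwise in $t$ this delivers the rapid decay of the $\mathscr{Z}_\lambda^\perp$-component of $u_\lambda$. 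The $\mathscr{Z}_\lambda$-component is annihilated by every $\gr{a}_\ell(t,\vv{X})$ and thus satisfies an equation involving only $\tilde P$ and the $R$-terms in $T$, whose principal part is elliptic, and standard elliptic arguments handle its decay.

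The main obstacle is the last argument of the previous paragraph: rigorously bridging the $t$-integrated control of $\gr{a}_\ell(t)\, u_\lambda(t)$ and the pointwise-in-$t$ quantitative GH of $\mathcal{L}$, for instance via a finite set $\{t_k\}\subset T$ whose values $\{\gr{a}_\ell(t_k)\}$ span the linear span of $\mathcal{L}$, and then carefully estimating the differences $\gr{a}_\ell(t_k)-\gr{a}_\ell(t)$ on $E_\lambda^G$ using the commutativity hypothesis and the skew-symmetry of $\vv{W}_\ell$ so as to avoid losses that would overwhelm the polynomial factor $(1+\lambda)^s$ from the GH.
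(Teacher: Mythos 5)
Your overall architecture matches the paper's (partial Fourier projections in $G$, smoothness of $\mathcal{F}^G_\lambda(u)$ from ellipticity in $t$, the cone estimate on $\Lambda_\theta$, the energy identity of Lemma~\ref{lem:energy_identity}, simultaneous diagonalization of each commuting family $\mathcal{L}_\ell$ on $E^G_\lambda$, and treating $R$ simply as a positive semidefinite perturbation), but the step you yourself flag as ``the main obstacle'' is precisely the heart of the theorem, and your proposed fix does not work. The energy estimate only controls $\int_T \| \gr{a}_\ell(t) u_\lambda(t)\|^2_{L^2(G)}\,\dd V_T(t)$, i.e.\ at each $t$ only the action of the \emph{single} element $\gr{a}_\ell(t)\in\gr{g}$, which may vanish or degenerate on large subsets of $T$; the quantitative hypoellipticity inequality needs a full spanning family of $\mathcal{L}$ applied to $u_\lambda(t)$ at that same $t$. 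Your suggestion of picking finitely many points $t_k$ and estimating $\gr{a}_\ell(t_k)-\gr{a}_\ell(t)$ on $E^G_\lambda$ is doomed: by Lemma~\ref{lem:bounding_eigenv_livf} any nonzero element of $\gr{g}$ has operator norm of order $\lambda^{1/2}$ on $E^G_\lambda$, so the comparison error is of the same size as the quantity you are trying to bound, with no smallness to exploit. The paper bridges exactly this gap by a different mechanism: working componentwise in the simultaneous eigenbasis, the quantity to control is scalar, $\int_T |\psi^\ell_i(t)|^2 |\gamma_i^{\lambda,\ell}|^2\,\dd V_T(t)$, and one combines Lemma~\ref{lem:lem_functions} (for each eigenvalue direction $\gamma$ there is an open set $A_\gamma\subset T$ of volume $\geq\delta$ on which $D_\ell(t,\gamma)\geq\alpha|\gamma|^2$) with the uniform Poincar\'e-type inequality of Proposition~\ref{prop:calc_ineq}, $\|\psi\|^2_{L^2(T)}\leq C(\|\psi\|^2_{L^2(A)}+\|\dd_T\psi\|^2_{L^2(T)})$ valid for every open $A$ with $\mathrm{vol}(A)\geq\delta$. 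The good-set term is absorbed into $\|\gr{a}_\ell(t,\vv{X})\psi\|^2_{L^2(T\times G)}$ and the gradient term into $\lambda\langle\Delta_T^\sharp\psi,\psi\rangle$, which is why the hypothesis $Q=\Delta_T$ matters; this yields the key lower bound $\langle P\psi,\psi\rangle\geq C(1+\lambda)^{-\rho}\|\psi\|^2$ of Proposition~\ref{prop:final_inequality}, and only then does one conclude as you intend.

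Two smaller corrections. First, the quantitative form of hypothesis~\eqref{it:thm15_hyp2} is not Proposition~\ref{Pro:66impliesLGH} (which is purely qualitative) but Proposition~\ref{prop:ghfinitevfs} together with Lemma~\ref{lem:Lspanlie}, and it gives a lower bound with a \emph{fixed} polynomial loss $(1+\lambda)^{-\rho}$ valid on all of $E^G_\lambda$ for $\lambda\geq\lambda_0$ -- not an estimate ``for every $s>0$'' and not merely on the complement of a common kernel. Consequently your decomposition into $\mathscr{Z}_\lambda\oplus\mathscr{Z}_\lambda^\perp$ is unnecessary (the common kernel is trivial for $\lambda\geq\lambda_0$), and your proposed treatment of the $\mathscr{Z}_\lambda$-component is in any case unsound: ellipticity of $\tilde{P}$ in the $t$-variable gives regularity in $t$ for each fixed $\lambda$ but provides no mechanism whatsoever for decay in $\lambda$, so ``standard elliptic arguments'' cannot handle that component if it were nontrivial. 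As written, the proposal therefore has a genuine gap at the central estimate, which is exactly what Lemmas~\ref{lem:lem_functions},~\ref{lem:l2estimvfd} and Proposition~\ref{prop:calc_ineq} are designed to fill.
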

Note that, for any $\ell \in \{1, \ldots, N\}$, we can assume that $t \in T \mapsto \gr{a}_\ell(t) \in \gr{g}$ is not identically zero. For operators $P$ as in~\eqref{eq:PdefLaplaceBeltrami} we have that
\begin{align*}
  \tilde{P} &= \Delta_T - \sum_{\ell = 1}^N \vv{W}_\ell^2
\end{align*}
is elliptic in $T$, as we show below. Additionally, note that if $G = \TT^m$ then $\mathcal{L}_\ell$ is always commutative, so Proposition~\ref{prop:first_nec_condition}, Theorem~\ref{thm:PGHnecessaell} and Theorem~\ref{thm:thm15} together yield Theorem~\ref{Thm:Toruscase}, hence our result generalizes~\cite[Theorem~1.5]{bfp17}.

Let us also point out that we were able to prove global hypoellipticity of $P$~\eqref{eq:Pdef} in Theorem~\ref{thm:thm19} and in Theorem~\ref{thm:albanese} when $Q$ is any positive semidefinite operator in $T$, where, on the other hand, we impose more restrictive assumptions on the vector fields $\gr{a}_\ell(t, \vv{X})$, for $\ell \in \{1, \ldots, N\}$.

\section{Consequences of the ellipticity of $\tilde{P}$ on the Fourier projections}

Let us start recalling some basic results of elliptic operators. We evaluate the principal symbol of $\tilde{P}$~\eqref{eq:tildeP} at $(t_0, \tau_0) \in T^* T \setminus 0$ by taking any $\psi \in \cinfty(T; \R)$ such that $\dd_T \psi(t_0) = \tau_0$: we have
  \begin{align*}
    \tilde{P}_2(t_0, \tau_0) = \lim_{\rho \to \infty} \rho^{-2} e^{-i \rho \psi} \left. \left( Q(e^{i \rho \psi}) - \sum_{\ell = 1}^N \vv{W}_\ell^2 (e^{i \rho \psi}) \right) \right|_{t_0} = Q_2(t_0, \tau_0) + \sum_{\ell = 1}^N (\vv{W}_\ell \psi)(t_0)^2.
  \end{align*}
  In particular, if $Q_2$ is a non-negative function and the system of vector fields $\vv{W}_1, \ldots, \vv{W}_N$ is elliptic in $T$ then certainly $\tilde{P}$ is elliptic. If, on the other hand, $Q = \Delta_T$, then $Q_2$ may be evaluated by means of the local expression of the Laplace-Beltrami operator: in an oriented coordinate chart $(U; t_1, \ldots, t_n)$ of $T$ centered at $t_0$ it is
  \begin{align*}
    \Delta_T \psi &= - \frac{1}{\sqrt{\det g}} \sum_{j,k = 1}^n \frac{\del}{\del t_j} \left( g^{jk} \sqrt{\det g} \frac{\del \psi}{\del t_k} \right), \quad \psi \in \cinfty(U).
  \end{align*}
  Here, $g = (g_{jk}) \in \cinfty(U, \GL_n(\R))$ is defined by
  \begin{align}
    g_{jk}(t) &\dfn \left \langle \left. \frac{\del}{\del t_j} \right|_t,  \left. \frac{\del}{\del t_k} \right|_t \right \rangle_{T_t T}, \quad t \in U, \ j, k \in \{1, \ldots, n\} \label{eq:gjk}
  \end{align}
  and $(g^{jk})$ denotes its inverse. It easily follows that for $\psi \in \cinfty(T; \R)$ such that $\dd_T \psi(t_0) = \tau_0$ we have
  \begin{align*}
    Q_2(t_0, \tau_0) = \lim_{\rho \to \infty} \rho^{-2} e^{-i \rho \psi} Q(e^{i \rho \psi})|_{t_0} = \sum_{j,k = 1}^n  g^{jk}(t_0) \frac{\del \psi}{\del t_j}(t_0) \frac{\del \psi}{\del t_k}(t_0)
  \end{align*}
  which vanishes only if $\tau_0 = 0$. It follows that $\tilde{P}$ is automatically elliptic when $Q = \Delta_T$ -- no assumptions needed on $\vv{W}_1, \ldots, \vv{W}_N$.

\begin{Lem} \label{lem:micr_from_fio} Suppose that $\tilde{P}$ is elliptic and that $u \in \D'(T \times G)$ is such that $Pu \in \cinfty(T \times G)$. Then for every $\phi \in \cinfty(G)$ we have that $\tilde{u}(\phi) \dfn \langle u, \cdot \otimes \phi \rangle \in \cinfty(T)$.
  \begin{proof} First, we will show that
    \begin{align}
      \{ (t, \tau) \in T^* T \setminus 0 \st (t, \tau, x, 0) \in \mathrm{Char}(P) \ \text{for some $x \in G$} \} &= \eset \label{eq:charPell}
    \end{align}
    which is a direct consequence of the ellipticity of $\tilde{P}$. Indeed, the principal symbol of $P$ at $(t,\tau, x, 0) \in T_t^* T \times T_x^* G \cong T_{(t,x)}^*(T \times G)$ is given by
    \begin{align*}
      P_2(t, \tau, x, 0) &= \lim_{\rho \to \infty} \rho^{-2} e^{-i \rho f} P (e^{i \rho f})
    \end{align*}
    where $f \in \cinfty(T \times G; \R)$ is any function such that $\dd f(t,x) = (\tau, 0)$. This can certainly be achieved by taking $f \dfn \psi \otimes 1_G$ where $\psi \in \cinfty(T; \R)$ is such that $\dd_T \psi(t) = \tau$, in which case one easily has
    \begin{align*}
      P (e^{i \rho f}) &= \tilde{P} (e^{i \rho \psi}) \otimes 1_G, \quad \forall \rho > 0.
    \end{align*}
    This ultimately implies that
    \begin{align*}
      P_2(t, \tau, x, 0) &= \tilde{P}_2(t, \tau), \quad \forall (t, \tau) \in T^* T, \ x \in G,
    \end{align*}
    hence~\eqref{eq:charPell} follows since $\tilde{P}$ is elliptic.
    
    Now let $\phi \in \cinfty(G)$: at first, we only know that $\tilde{u}(\phi) \in \D'(T)$. Let $\mathscr{U} \dfn \{ U_\alpha \}_{\alpha \in A}$ be a finite covering of $G$ by coordinate open sets, so by means of a partition of unity subordinate to $\mathscr{U}$ we may write
    \begin{align*}
      \phi = \sum_\alpha \phi_\alpha &\Longrightarrow \tilde{u} (\phi) = \sum_\alpha \tilde{u}(\phi_\alpha), \quad \phi_\alpha \in \cinfty_c(U_\alpha),
    \end{align*}
    hence it is enough to prove that $\tilde{u}(\phi_\alpha) \in \cinfty(T)$ for each $\alpha \in A$. In order to do so, let $V \sset T$ be a coordinate open set and define $v \dfn u|_{V \times U_\alpha}$. For $\psi \in \cinfty_c(V)$ we have
    \begin{align*}
      \langle \tilde{v}(\phi_\alpha), \psi \rangle = \langle v, \psi \otimes \phi_\alpha \rangle = \langle u, \psi \otimes \phi_\alpha \rangle = \langle \tilde{u}(\phi_\alpha), \psi \rangle
    \end{align*}
    i.e.~$\tilde{v}(\phi_\alpha) = \tilde{u}(\phi_\alpha)|_V$, and thus for $(t, \tau) \in T^*V \sset T^* T$ we have
    \begin{align*}
      (t, \tau) \in \mathrm{WF}(\tilde{v}(\phi_\alpha)) &\Longleftrightarrow (t, \tau) \in \mathrm{WF}(\tilde{u}(\phi_\alpha)).
    \end{align*}
    On the other hand, by~\cite[Theorem~2.5.12]{hormander71} -- which we can now apply since both $U_\alpha$ and $V$ are Euclidean open sets -- we have that
    \begin{align*}
      (t, \tau) \in \mathrm{WF}(\tilde{v}(\phi_\alpha)) &\Longrightarrow (t, \tau, x, 0) \in \mathrm{WF}(v) \ \text{for some $x \in U_\alpha$} \\
      &\Longrightarrow (t, \tau, x, 0) \in \mathrm{WF}(u)
    \end{align*}
    which is further contained in $\mathrm{Char}(P)$ since $Pu$ is everywhere smooth. But by~\eqref{eq:charPell} we must have $\mathrm{WF}(\tilde{v}(\phi_\alpha)) = \eset$ and hence $\tilde{u}(\phi_\alpha)$ is smooth on $V$.      
  \end{proof}
\end{Lem}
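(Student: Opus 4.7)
The natural approach is microlocal: show that $Pu \in \cinfty(T \times G)$ forces $\mathrm{WF}(u)$ to avoid all covectors of the ``purely $T$-direction'' form $(t, \tau, x, 0)$ with $\tau \neq 0$, and then use the fact that pairing $u$ against $\phi \in \cinfty(G)$ in the $G$-slot only picks up wavefront contributions from precisely such covectors.

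First I would compute the principal symbol $P_2$ of $P$ at an arbitrary covector $(t_0, \tau_0, x_0, 0) \in T^*(T \times G) \setminus 0$ with $\tau_0 \neq 0$. Pick $\psi \in \cinfty(T; \R)$ with $\dd_T \psi(t_0) = \tau_0$ and test on $f \dfn \psi \otimes 1_G$: since $\gr{a}_\ell(t, \vv{X})$ annihilates every function of the form $\eta \otimes 1_G$ (because $\vv{X}_j 1_G = 0$), a quick expansion of the square gives
\begin{align*}
  P(\psi \otimes 1_G) &= (\tilde{P} \psi) \otimes 1_G,
\end{align*}
and passing to the limit $\rho \to \infty$ of $\rho^{-2} e^{-i\rho f} P(e^{i\rho f})$ yields $P_2(t_0, \tau_0, x_0, 0) = \tilde{P}_2(t_0, \tau_0)$. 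Ellipticity of $\tilde{P}$ then rules out such covectors from $\mathrm{Char}(P)$.

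Next, since $Pu \in \cinfty(T \times G)$, the standard microlocal regularity inclusion $\mathrm{WF}(u) \sset \mathrm{Char}(P)$ tells us that no $(t, \tau, x, 0)$ with $\tau \neq 0$ lies in $\mathrm{WF}(u)$. Now the key classical fact about integration along fibers: for $u \in \D'(T \times G)$ and $\phi \in \cinfty(G)$, the wavefront set of the pairing $\tilde{u}(\phi)$ (which is the pushforward of $(1 \otimes \phi) \cdot u$ along the projection $\pi_T$, available because $G$ is compact) satisfies
\begin{align*}
  \mathrm{WF}(\tilde{u}(\phi)) &\sset \left\{ (t, \tau) \in T^*T \setminus 0 \st (t, \tau, x, 0) \in \mathrm{WF}(u) \text{ for some } x \in G \right\}.
\end{align*}
Combining with the previous step, $\mathrm{WF}(\tilde{u}(\phi)) = \emptyset$, so $\tilde{u}(\phi) \in \cinfty(T)$.

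The main obstacle is the last inclusion: the standard reference (Hörmander) states the pushforward/restriction-type wavefront inequality on Euclidean opens. I would therefore pick a finite coordinate cover $\{U_\alpha\}$ of $G$ and a subordinate partition of unity to reduce $\phi = \sum \phi_\alpha$ with $\supp \phi_\alpha \sset U_\alpha$, then localize $u$ to $V \times U_\alpha$ for a coordinate chart $V \sset T$, where Hörmander's result applies verbatim. Checking smoothness of $\tilde{u}(\phi_\alpha)$ on each such $V$ covers $T$ and yields the conclusion.
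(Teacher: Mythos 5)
Your proposal is correct and follows essentially the same route as the paper's proof: computing $P_2(t,\tau,x,0)=\tilde P_2(t,\tau)$ via test functions $\psi\otimes 1_G$, invoking $\mathrm{WF}(u)\subset\mathrm{Char}(P)$, and then applying H\"ormander's wavefront bound for the pairing $\tilde u(\phi)$ after localizing with a partition of unity on $G$ and coordinate charts on $T$.
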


\begin{Cor} \label{cor:ulambdasmooth} Suppose that $\tilde{P}$ is elliptic and that $u \in \D'(T \times G)$ is such that $Pu \in \cinfty(T \times G)$. Then $\mathcal{F}^G_\lambda (u) \in \cinfty(T; E_\lambda^G)$ for every $\lambda \in \sigma(\Delta_G)$.
  \begin{proof} We write, by selecting an orthonormal basis $\phi^\lambda_1, \ldots, \phi^\lambda_{d_\lambda^G}$ for $E_\lambda^G$,
    \begin{align*}
      \mathcal{F}^G_\lambda (u) &= \sum_{i = 1}^{d_\lambda^G} \mathcal{F}^G_\lambda (u)_i \otimes \phi^\lambda_i
    \end{align*}
    where
    \begin{align*}
      \mathcal{F}^G_\lambda (u)_i = \left \langle u, \cdot \otimes \overline{\phi^\lambda_i} \right \rangle = \tilde{u} \left( \overline{\phi^\lambda_i} \right) , \quad i \in \{1, \ldots, d_\lambda^G \},
    \end{align*}
    which are smooth by Lemma~\ref{lem:micr_from_fio}.
  \end{proof}
\end{Cor}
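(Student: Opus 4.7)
The plan is to reduce this corollary directly to Lemma~\ref{lem:micr_from_fio}, by unpacking the definition of $\mathcal{F}^G_\lambda(u)$ as a vector-valued distribution on $T$ in terms of an orthonormal basis of the finite-dimensional eigenspace $E_\lambda^G$. Concretely, I would fix an orthonormal basis $\phi^\lambda_1, \ldots, \phi^\lambda_{d_\lambda^G}$ of $E_\lambda^G$ (which consists of smooth functions on $G$) and write
\begin{align*}
  \mathcal{F}^G_\lambda(u) &= \sum_{i = 1}^{d_\lambda^G} F_i \otimes \phi^\lambda_i, \quad F_i \in \D'(T),
\end{align*}
according to the construction of the partial Fourier projection map on distributions given just before Proposition~\ref{prop:pw_product}. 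From that same construction, the coefficient distributions are
\begin{align*}
  \langle F_i, \psi \rangle &= \langle u, \psi \otimes \overline{\phi^\lambda_i} \rangle, \quad \forall \psi \in \cinfty(T),
\end{align*}
so in the notation of Lemma~\ref{lem:micr_from_fio} we have the identification $F_i = \tilde{u}(\overline{\phi^\lambda_i})$.

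Now I would apply Lemma~\ref{lem:micr_from_fio}: since $\tilde{P}$ is elliptic by hypothesis and $Pu \in \cinfty(T \times G)$, the lemma applies to every smooth test function on $G$. The complex conjugate $\overline{\phi^\lambda_i}$ lies in $\cinfty(G)$ because $\phi^\lambda_i$ does, so the lemma yields $F_i = \tilde{u}(\overline{\phi^\lambda_i}) \in \cinfty(T)$ for each $i \in \{1, \ldots, d_\lambda^G\}$. Consequently each summand $F_i \otimes \phi^\lambda_i$ is a smooth $E_\lambda^G$-valued function on $T$, and the finite sum $\mathcal{F}^G_\lambda(u)$ belongs to $\cinfty(T; E_\lambda^G)$, as desired.

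There is essentially no obstacle here, as the heavy lifting — the microlocal/ellipticity argument that controls the wavefront set of the partial traces of $u$ — has already been carried out in Lemma~\ref{lem:micr_from_fio}. The only things to verify are purely formal: that the coefficient distributions $F_i$ in the basis expansion of the vector-valued distribution $\mathcal{F}^G_\lambda(u)$ are precisely the scalar distributions $\tilde{u}(\overline{\phi^\lambda_i})$ introduced in the preceding lemma, and that $d_\lambda^G < \infty$ so a finite sum of smooth terms remains smooth. Both are immediate from the definitions and the finite-dimensionality of $E_\lambda^G$.
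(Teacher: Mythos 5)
Your argument is correct and is essentially the same as the paper's proof: expand $\mathcal{F}^G_\lambda(u)$ in an orthonormal basis of $E_\lambda^G$, identify the coefficient distributions with $\tilde{u}(\overline{\phi^\lambda_i})$, and invoke Lemma~\ref{lem:micr_from_fio} to conclude each coefficient is smooth, hence so is the finite sum.
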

For the next lemma, recall that for $M$ a compact manifold as in Section~\ref{sec:preliminaries} the topology of $\cinfty(M)$ can be given by the following system of (semi)norms, defined, for $f \in \cinfty(M)$, as
\begin{align*}
  \| f \|_{\mathscr{H}^s(M)} &\dfn \left \| (I + \Delta)^s f \right \|_{L^2(M)}, \quad s \in \Z_+.
\end{align*}
We use this fact below with $M = T, G$ and $\Delta = \Delta_T, \Delta_G$, respectively.
\begin{Lem} \label{lem:global_ineq_microl_hormander} Suppose that $u \in \D'(T \times G)$ is such that $\tilde{u}(\phi) = \langle u, \cdot \otimes \phi \rangle \in \cinfty(T)$ for every $\phi \in \cinfty(G)$. Then for each $s > 0$ there exist $C > 0$ and $\theta \in (0, 1)$ such that
  \begin{align*}
    \| \mathcal{F}^T_\mu \mathcal{F}^G_\lambda (u) \|_{L^2(T \times G)} &\leq C (1 + \mu + \lambda)^{-s}, \quad \forall (\mu, \lambda) \in \Lambda_\theta,
  \end{align*}
  where $\Lambda_\theta$ is defined in~\eqref{eq:Atheta}.
  \begin{proof} The hypothesis means that the range of the continuous linear map $\tilde{u}: \cinfty(G) \rarr \D'(T)$ actually lies in $\cinfty(T)$. This yields a new linear map $\tilde{u}: \cinfty(G) \rarr \cinfty(T)$ which is continuous by the Closed Graph Theorem: it follows that for each $s \in \Z_+$ there exist $C > 0$ and $s' \in \Z_+$ such that
    \begin{align*}
      \| \tilde{u}(\phi) \|_{\mathscr{H}^s(T)} &\leq C \| \phi \|_{\mathscr{H}^{s'}(G)}, \quad \forall \phi \in \cinfty(G).
    \end{align*}
    Taking $\phi = \overline{\phi^\lambda_j}$ -- one of our orthonormal basis elements of $E^G_\lambda$ -- we obtain
    \begin{align*}
   \| \tilde{u}(\overline{\phi^\lambda_j}) \|_{\mathscr{H}^s(T)} &\leq C   \| \overline{\phi^\lambda_j} \|_{\mathscr{H}^{s'}(G)} = C(1 + \lambda)^{s'},
    \end{align*}
    while on the other hand
    \begin{align*}
      \| \tilde{u}(\overline{\phi^\lambda_j}) \|_{\mathscr{H}^s(T)}^2 &= \sum_{\mu \in \sigma(\Delta_T)} (1 + \mu)^{2s} \left \| \mathcal{F}^T_\mu [\tilde{u}(\overline{\phi^\lambda_j})] \right \|_{L^2(T)}^2 \\
      &= \sum_{\mu \in \sigma(\Delta_T)} (1 + \mu)^{2s} \sum_{i = 1}^{d^T_\mu} | \langle \tilde{u}(\overline{\phi^\lambda_j}), \overline{\psi^\mu_i} \rangle |^2 \\
      &= \sum_{\mu \in \sigma(\Delta_T)} (1 + \mu)^{2s} \sum_{i = 1}^{d^T_\mu} | \langle u, \overline{\psi^\mu_i \otimes \phi^\lambda_j} \rangle |^2
    \end{align*}
    hence
    \begin{align*}
      \sum_{j = 1}^{d^G_\lambda} \| \tilde{u}(\overline{\phi^\lambda_j}) \|_{\mathscr{H}^s(T)}^2 = \sum_{\mu \in \sigma(\Delta_T)} (1 + \mu)^{2s} \sum_{i = 1}^{d^T_\mu} \sum_{j = 1}^{d^G_\lambda} | \langle u, \overline{\psi^\mu_i \otimes \phi^\lambda_j} \rangle |^2 = \sum_{\mu \in \sigma(\Delta_T)} (1 + \mu)^{2s} \| \mathcal{F}^T_\mu \mathcal{F}^G_\lambda (u) \|_{L^2(T \times G)}^2
    \end{align*}
    from which we conclude that
    \begin{align*}
      (1 + \mu)^{2s} \| \mathcal{F}^T_\mu \mathcal{F}^G_\lambda (u) \|_{L^2(T \times G)}^2 \leq \sum_{j = 1}^{d^G_\lambda} \| \tilde{u}(\overline{\phi^\lambda_j}) \|_{\mathscr{H}^s(T)}^2 \leq d^G_\lambda C^2 (1 + \lambda)^{2s'}
    \end{align*}
    and thus
    \begin{align*}
      (1 + \mu)^{s} \| \mathcal{F}^T_\mu \mathcal{F}^G_\lambda (u) \|_{L^2(T \times G)} &\leq \sqrt{d^G_\lambda} C (1 + \lambda)^{s'}, \quad \forall (\mu, \lambda) \in \sigma(\Delta_T) \times \sigma(\Delta_G)
    \end{align*}
    and since, thanks to~\eqref{eq:weyl}, we have $d^G_\lambda = \mathrm{O}(\lambda^{2m})$ it follows, enlarging $C$ whenever necessary, that
    \begin{align*}
      (1 + \mu)^{s} \| \mathcal{F}^T_\mu \mathcal{F}^G_\lambda (u) \|_{L^2(T \times G)} &\leq C (1 + \lambda)^{s' + m}, \quad \forall (\mu, \lambda) \in \sigma(\Delta_T) \times \sigma(\Delta_G).
    \end{align*}

    Let $\theta \in (0, 1)$ be so small that $\theta(s' + m) \leq s/2$: for $(\mu, \lambda) \in \Lambda_\theta$ we then have
    \begin{align*}
      \| \mathcal{F}^T_\mu \mathcal{F}^G_\lambda (u) \|_{L^2(T \times G)} \leq C (1 + \lambda)^{s' + m} (1 + \mu)^{-s} \leq C (1 + \mu)^{\theta (s' + m) -s} \leq C (1 + \mu)^{-s/2}.
    \end{align*}
    Moreover, on $\Lambda_\theta$ we have
    \begin{align*}
      1 + \mu + \lambda \leq \mu + (1 + \mu)^\theta \leq (1 + \mu)^{\theta + 1} \leq (1 + \mu)^2
    \end{align*}
    from which we finally conclude
    \begin{align*}
      \| \mathcal{F}^T_\mu \mathcal{F}^G_\lambda (u) \|_{L^2(T \times G)} &\leq C (1 + \mu + \lambda)^{-s/4}, \quad \forall (\mu, \lambda) \in \Lambda_\theta,
    \end{align*}
    hence leading to our conclusion.
  \end{proof}
\end{Lem}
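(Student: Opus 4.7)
The plan is to combine the Closed Graph Theorem applied to the partial evaluation map $\tilde u$ with the spectral description of smoothness in terms of Sobolev seminorms built from the Laplace--Beltrami operators, and then use the narrow ``cone'' $\Lambda_\theta$ to convert one-sided polynomial bounds into joint decay.

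First I would reinterpret the hypothesis as saying that the continuous linear map $\tilde u: \cinfty(G) \rarr \D'(T)$ actually lands in $\cinfty(T)$. Since $\cinfty(T)$ and $\cinfty(G)$ are Fréchet spaces and the inclusion $\cinfty(T) \hookrightarrow \D'(T)$ is continuous, the Closed Graph Theorem upgrades $\tilde u$ to a continuous map $\cinfty(G) \rarr \cinfty(T)$. Topologizing both spaces by the Sobolev seminorms $\|\cdot\|_{\mathscr{H}^s} = \|(I + \Delta)^s (\cdot)\|_{L^2}$, continuity translates into: for each $s \in \Z_+$ there exist $C > 0$ and $s' \in \Z_+$ such that $\|\tilde u(\phi)\|_{\mathscr{H}^s(T)} \leq C \|\phi\|_{\mathscr{H}^{s'}(G)}$ for every $\phi \in \cinfty(G)$.

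Next I would plug in the eigenfunctions $\phi = \overline{\phi^\lambda_j}$, which satisfy $\|\overline{\phi^\lambda_j}\|_{\mathscr{H}^{s'}(G)} = (1+\lambda)^{s'}$. On the $T$-side I would expand $\|\tilde u(\overline{\phi^\lambda_j})\|_{\mathscr{H}^s(T)}^2$ via the spectral decomposition of $\Delta_T$, recognize that $\langle \tilde u(\overline{\phi^\lambda_j}), \overline{\psi^\mu_i} \rangle = \langle u, \overline{\psi^\mu_i \otimes \phi^\lambda_j} \rangle$, and sum over $j \in \{1, \ldots, d^G_\lambda\}$ to obtain
\[
\sum_{\mu \in \sigma(\Delta_T)} (1+\mu)^{2s} \|\mathcal{F}^T_\mu \mathcal{F}^G_\lambda(u)\|_{L^2(T \times G)}^2 \;\leq\; d^G_\lambda \, C^2 (1+\lambda)^{2s'}.
\]
Weyl's asymptotic~\eqref{eq:weyl} then dominates $d^G_\lambda$ by a polynomial in $\lambda$, yielding the one-sided term-by-term estimate $(1+\mu)^s \|\mathcal{F}^T_\mu \mathcal{F}^G_\lambda(u)\|_{L^2(T \times G)} \leq C (1 + \lambda)^{s' + m}$ for every $(\mu, \lambda) \in \sigma(\Delta_T) \times \sigma(\Delta_G)$.

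The hard part is converting this into a genuine joint decay $(1+\mu+\lambda)^{-s}$, since the right-hand side is actually growing in $\lambda$. This is exactly what the cone $\Lambda_\theta$ is designed for: on $\Lambda_\theta$ one has $(1+\lambda)^{s'+m} \leq (1+\mu)^{\theta(s'+m)}$, so choosing $\theta$ small enough that $\theta(s'+m) \leq s/2$ absorbs the $\lambda$-growth and leaves a net decay of order $(1+\mu)^{-s/2}$. Since on $\Lambda_\theta$ one also has $1+\mu+\lambda \leq (1+\mu)^{1+\theta} \leq (1+\mu)^2$, this $\mu$-decay can be traded for $(\mu+\lambda)$-decay at the cost of halving the exponent once more; running the argument from the outset with $4s$ in place of $s$ then delivers the advertised bound.
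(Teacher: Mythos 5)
Your proposal is correct and follows essentially the same route as the paper's proof: Closed Graph Theorem for $\tilde u:\cinfty(G)\to\cinfty(T)$, testing against the eigenfunctions $\overline{\phi^\lambda_j}$ and summing over $j$ to bound $(1+\mu)^{s}\|\mathcal{F}^T_\mu\mathcal{F}^G_\lambda(u)\|_{L^2(T\times G)}$ by a power of $(1+\lambda)$ via Weyl's formula, and then choosing $\theta$ small so that the cone $\Lambda_\theta$ absorbs the $\lambda$-growth and converts the $\mu$-decay into $(1+\mu+\lambda)$-decay. The final bookkeeping (halving the exponent twice and starting from $4s$) matches the paper's argument as well.
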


Combining Lemmas~\ref{lem:micr_from_fio} and~\ref{lem:global_ineq_microl_hormander} we conclude:
\begin{Cor} \label{cor:est_small_cone} Suppose that $\tilde{P}$ is elliptic. If $u \in \D'(T \times G)$ is such that $Pu \in \cinfty(T \times G)$ then for every $s > 0$ there exist $C > 0$ and $\theta \in (0, 1)$ such that
  \begin{align*}
    \| \mathcal{F}^T_\mu \mathcal{F}^G_\lambda (u) \|_{L^2(T \times G)} &\leq C (1 + \mu + \lambda)^{-s}, \quad \forall (\mu, \lambda) \in \Lambda_\theta.
  \end{align*}
\end{Cor}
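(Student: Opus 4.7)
The statement is essentially a direct concatenation of the two preceding lemmas, so my plan is simply to verify that the output of Lemma~\ref{lem:micr_from_fio} is precisely the hypothesis needed to invoke Lemma~\ref{lem:global_ineq_microl_hormander}.

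More concretely, given $u \in \D'(T \times G)$ with $Pu \in \cinfty(T \times G)$ and $\tilde{P}$ elliptic, I would first apply Lemma~\ref{lem:micr_from_fio} to conclude that for every fixed $\phi \in \cinfty(G)$, the partial pairing $\tilde{u}(\phi) = \langle u, \cdot \otimes \phi \rangle$, which a priori is only a distribution on $T$, is in fact smooth. (This is the microlocal content of the corollary: ellipticity of $\tilde{P}$ forbids wavefront directions of the form $(t,\tau,x,0)$, and hence the $\phi$-slice of $u$ has empty wavefront set on $T$.)

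Once we know $\tilde{u}(\phi) \in \cinfty(T)$ for every $\phi \in \cinfty(G)$, the hypothesis of Lemma~\ref{lem:global_ineq_microl_hormander} is fulfilled verbatim. I would then quote that lemma to obtain, for each prescribed $s > 0$, constants $C > 0$ and $\theta \in (0,1)$ such that
\begin{align*}
  \| \mathcal{F}^T_\mu \mathcal{F}^G_\lambda (u) \|_{L^2(T \times G)} &\leq C(1 + \mu + \lambda)^{-s}, \quad \forall (\mu,\lambda) \in \Lambda_\theta,
\end{align*}
which is exactly the conclusion.

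There is no real obstacle here beyond the bookkeeping of citing the two lemmas; both the elliptic microlocal regularity (Lemma~\ref{lem:micr_from_fio}) and the Closed Graph/Weyl-type decay estimate on the cone $\Lambda_\theta$ (Lemma~\ref{lem:global_ineq_microl_hormander}) have already been established, and neither step requires any additional estimate on $u$ or on $P$ beyond what has been recorded. Thus the proof is a two-line composition.
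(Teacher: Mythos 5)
Your proposal is correct and is exactly the paper's argument: the corollary is stated there as the immediate combination of Lemma~\ref{lem:micr_from_fio} (ellipticity of $\tilde{P}$ gives $\tilde{u}(\phi) \in \cinfty(T)$ for every $\phi \in \cinfty(G)$) with Lemma~\ref{lem:global_ineq_microl_hormander} (that hypothesis yields the decay estimate on $\Lambda_\theta$). Nothing further is needed.
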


\section{Interlude: global hypoellipticity of certain systems of vector fields}

In this section we derive some general results regarding global hypoellipticity of systems of vector fields (Definition~\ref{def:gh_systems}) which are needed to pave the way for the proofs of Theorem~\ref{thm:thm15} and related results later on. We consider  $M$ a compact Riemannian manifold enjoying all the properties described in Section~\ref{sec:preliminaries}, from where we also borrow the notation. We denote its Laplace-Beltrami operator simply by $\Delta$, and $\mathcal{L}$ will stand for any system of smooth vector fields  in $M$.
\begin{Lem} \label{lem:Lspanlie} The following are equivalent:
  \begin{enumerate}
  \item \label{it:gh1} $\mathcal{L}$ is~$\mathrm{(GH)}$ in $M$.
  \item \label{it:gh2} $\Span_\R \mathcal{L}$ is~$\mathrm{(GH)}$ in $M$.
  \item \label{it:gh3} $\lie \mathcal{L}$, the Lie algebra generated by $\mathcal{L}$, is~$\mathrm{(GH)}$ in $M$.
  \end{enumerate}
  \begin{proof} It is clear that if $\mathcal{L} \sset \mathcal{L}'$ are two families of vector fields and $\mathcal{L}$ is~$\mathrm{(GH)}$ in $M$ then so is $\mathcal{L}'$. This observation takes care of the implications~$\eqref{it:gh1} \Rightarrow \eqref{it:gh2} \Rightarrow \eqref{it:gh3}$ since $\mathcal{L} \sset \Span_\R \mathcal{L} \sset \lie \mathcal{L}$. Since moreover
    \begin{align*}
      \lie \mathcal{L} &= \Span_\R \bigcup_{\nu \in \N} \{[\vv{X}_1,[ \cdots [\vv{X}_{\nu - 1}, \vv{X}_\nu] \cdots ]] \st \vv{X}_j \in \mathcal{L}, \ 1 \leq j \leq \nu \}
    \end{align*}
    it is also clear that, given $u \in \D'(M)$, if $\vv{L} u \in \cinfty(M)$ for every $\vv{L} \in \mathcal{L}$ then also $\tilde{\vv{L}} u \in \cinfty(M)$ for every $\tilde{\vv{L}} \in \lie \mathcal{L}$. It follows immediately that $\eqref{it:gh3} \Rightarrow \eqref{it:gh1}$.
  \end{proof}
\end{Lem}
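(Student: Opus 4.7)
The plan rests on a simple monotonicity principle for global hypoellipticity: if $\mathcal{L} \subset \mathcal{L}'$ and $\mathcal{L}$ is $\mathrm{(GH)}$ in $M$, then so is $\mathcal{L}'$. Indeed, any $u \in \D'(M)$ with $\vv{L}u \in \cinfty(M)$ for every $\vv{L} \in \mathcal{L}'$ satisfies this property in particular for every $\vv{L} \in \mathcal{L}$, hence $u \in \cinfty(M)$. Because $\mathcal{L} \subset \Span_\R \mathcal{L} \subset \lie \mathcal{L}$, this immediately yields the chain of implications $\eqref{it:gh1} \Rightarrow \eqref{it:gh2} \Rightarrow \eqref{it:gh3}$.

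The content is therefore in the converse $\eqref{it:gh3} \Rightarrow \eqref{it:gh1}$. Suppose $\mathcal{L}$ is $\mathrm{(GH)}$; I want to show that $\lie \mathcal{L}$ is also $\mathrm{(GH)}$. Fix $u \in \D'(M)$ with $\vv{L}u \in \cinfty(M)$ for every $\vv{L} \in \mathcal{L}$. Using the description
\begin{align*}
  \lie \mathcal{L} &= \Span_\R \bigcup_{\nu \in \N} \bigl\{[\vv{X}_1,[ \cdots [\vv{X}_{\nu - 1}, \vv{X}_\nu] \cdots ]] \st \vv{X}_j \in \mathcal{L}, \ 1 \leq j \leq \nu \bigr\},
\end{align*}
it suffices to prove by induction on $\nu$ that every such $\nu$-fold right-nested bracket $\vv{Y}$ satisfies $\vv{Y}u \in \cinfty(M)$; linearity of the action of vector fields on distributions then transports this to all $\R$-linear combinations, and hence to every $\tilde{\vv{L}} \in \lie \mathcal{L}$. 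The base case $\nu = 1$ is the hypothesis. For the inductive step, if $\vv{Y}u \in \cinfty(M)$ for some such $\vv{Y}$ and $\vv{X}_0 \in \mathcal{L}$, then
\begin{align*}
  [\vv{X}_0, \vv{Y}] u &= \vv{X}_0(\vv{Y}u) - \vv{Y}(\vv{X}_0 u),
\end{align*}
and both summands on the right lie in $\cinfty(M)$: by the inductive hypothesis $\vv{Y}u \in \cinfty(M)$, by the base hypothesis $\vv{X}_0 u \in \cinfty(M)$, and smooth vector fields map $\cinfty(M)$ into itself. Applying the assumed $\mathrm{(GH)}$ of $\lie \mathcal{L}$ to conclude $u \in \cinfty(M)$ completes the argument.

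There is no real obstacle here beyond keeping track of the inductive bookkeeping; the point is simply that the assumption $\vv{L}u \in \cinfty(M)$ on generators automatically propagates to arbitrary iterated commutators because each outer vector field is being applied to a function already known to be smooth. No appeal to microlocal or spectral machinery is needed.
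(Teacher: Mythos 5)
Your proposal is correct and takes essentially the same route as the paper: the monotonicity observation handles $\eqref{it:gh1} \Rightarrow \eqref{it:gh2} \Rightarrow \eqref{it:gh3}$, and the inductive propagation of smoothness of $\vv{Y}u$ through iterated brackets (via $[\vv{X}_0,\vv{Y}]u = \vv{X}_0(\vv{Y}u) - \vv{Y}(\vv{X}_0 u)$) is exactly the paper's argument for $\eqref{it:gh3} \Rightarrow \eqref{it:gh1}$, only spelled out in more detail. One slip of the pen: the sentence opening the converse says ``Suppose $\mathcal{L}$ is $\mathrm{(GH)}$; I want to show that $\lie \mathcal{L}$ is also $\mathrm{(GH)}$,'' which swaps hypothesis and conclusion, but the argument you actually run (assume $\lie \mathcal{L}$ is $\mathrm{(GH)}$, propagate regularity from $\mathcal{L}$ to $\lie \mathcal{L}$, conclude $u \in \cinfty(M)$) is the right one.
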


The main advantage of the previous lemma is that it enables us to freely transition between different sets of generators of a given system. The next proposition characterizes global hypoellipticity of certain finitely generated systems in terms of manageable inequalities.
\begin{Prop} \label{prop:ghfinitevfs} Suppose that $\vv{L}_1, \ldots, \vv{L}_r$ are smooth vector fields on $M$ which commute with $\Delta$. Then the system $\{\vv{L}_1, \ldots, \vv{L}_r\}$ is~$\mathrm{(GH)}$ in $M$ if and only if there exist $C, \rho > 0$ and $\lambda_0 \in \sigma(\Delta)$ such that
  \begin{align}
    \left( \sum_{j = 1}^r \| \vv{L}_j \phi \|_{L^2(M)}^2 \right)^{\frac{1}{2}} &\geq C(1 + \lambda)^{-\rho} \| \phi \|_{L^2(M)}, \quad \forall \phi \in E_\lambda, \ \forall \lambda \geq \lambda_0. \label{eq:ineq_gh}
  \end{align}
  \begin{proof} Suppose that $C, \rho > 0$ and $\lambda_0 \in \sigma(\Delta)$ are such that~\eqref{eq:ineq_gh} holds, and let $u \in \D'(M)$ be such that $\vv{L}_1 u, \ldots, \vv{L}_r u \in \cinfty(M)$. Given $s > 0$, for each $j \in \{1, \ldots, r\}$ there exists $C_j > 0$ such that
    \begin{align*}
      \| \mathcal{F}_\lambda(\vv{L}_j u) \|_{L^2(M)} &\leq C_j (1 + \lambda)^{-s - \rho}, \quad \forall \lambda \in \sigma(\Delta),
    \end{align*}
    by Proposition~\ref{prop:charac_smoothness_fourier_proj}. Since $\mathcal{F}_\lambda(\vv{L}_j u) = \vv{L}_j \mathcal{F}_\lambda(u)$ (for $\vv{L}_j$ commutes with $\Delta$: use Proposition~\ref{prop:invopscommft} with $T \dfn \{ \mathrm{pt} \}$, or see~\cite[Proposition~2.2]{araujo19}) we have for every $\lambda \geq \lambda_0$ that
    \begin{align*}
      \| \mathcal{F}_\lambda(u) \|_{L^2(M)} \leq C^{-1} (1 + \lambda)^{\rho} \left( \sum_{j = 1}^r \| \vv{L}_j \mathcal{F}_\lambda(u) \|_{L^2(M)}^2 \right)^{\frac{1}{2}} \leq C^{-1} \left( \sum_{j = 1}^r C_j^2 \right)^{\frac{1}{2}} (1 + \lambda)^{-s}. 
    \end{align*}
    Since the set $\{ \lambda \in \sigma(\Delta) \st \lambda < \lambda_0 \}$ is finite we easily conclude by Proposition~\ref{prop:charac_smoothness_fourier_proj} that $u \in \cinfty(M)$.

    As for the converse, suppose that for every $\nu \in \N$ there exist $\lambda_\nu \in \sigma(\Delta)$ with $\lambda_\nu \geq \nu$ and $\phi_\nu \in E_{\lambda_\nu}$ such that
    \begin{align*}
    \left( \sum_{j = 1}^r \| \vv{L}_j \phi_\nu \|_{L^2(M)}^2 \right)^{\frac{1}{2}} &< 2^{-\nu} (1 + \lambda_\nu)^{-\nu} \| \phi_\nu \|_{L^2(M)}. 
    \end{align*}
    Without loss of generality we may assume that $\| \phi_\nu \|_{L^2(M)} = 1$ and that the sequence $\{ \lambda_\nu \}_{\nu \in \N}$ is strictly increasing. If we then define
    \begin{align*}
      u & \dfn \sum_{\nu \in \N} \phi_\nu
    \end{align*}
    then $u \in \D'(M) \setminus \cinfty(M)$ by Proposition~\ref{prop:charac_smoothness_fourier_proj} since
    \begin{align*}
      \mathcal{F}_\lambda(u) &=
      \begin{cases}
        \phi_\nu, &\text{if $\lambda = \lambda_\nu$}, \\
        0, &\text{otherwise};
      \end{cases}
    \end{align*}
    on the other hand, for each $j \in \{1, \ldots, r\}$ we have, given $s > 0$:
    \begin{itemize}
    \item if $\lambda = \lambda_\nu$ for some $\nu \geq s$:
      \begin{align*}
        \| \mathcal{F}_\lambda (\vv{L}_j u) \|_{L^2(M)} = \| \vv{L}_j \phi_\nu \|_{L^2(M)} \leq 2^{-\nu} (1 + \lambda_\nu)^{-\nu} \leq (1 + \lambda)^{-s};
      \end{align*}
    \item if $\lambda \neq \lambda_\nu$ for every $\nu \in \N$:
      \begin{align*}
        \| \mathcal{F}_\lambda (\vv{L}_j u) \|_{L^2(M)} = 0 \leq (1 + \lambda)^{-s}.
      \end{align*}
    \end{itemize}
    Thus $\vv{L}_j u \in \cinfty(M)$ as, since the set $\{ \nu \in \N \st \nu < s \}$ is finite, there exists a constant $C_j > 0$ such that
    \begin{align*}
      \| \mathcal{F}_\lambda (\vv{L}_j u) \|_{L^2(M)} &\leq C_j(1 + \lambda)^{-s}, \quad \forall \lambda \in \sigma(\Delta).
    \end{align*}
    As this holds for every $j \in \{1, \ldots, r\}$ we conclude that $\{ \vv{L}_1, \ldots, \vv{L}_r\}$ is not~$\mathrm{(GH)}$ in $M$. 
  \end{proof}
\end{Prop}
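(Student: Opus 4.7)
The plan is to use Proposition~\ref{prop:charac_smoothness_fourier_proj} to translate both sides of the equivalence into statements about the Fourier projections $\mathcal{F}_\lambda$, coupled with the crucial observation that, since each $\vv{L}_j$ commutes with $\Delta$, each $\vv{L}_j$ preserves every eigenspace $E_\lambda$ and $\mathcal{F}_\lambda(\vv{L}_j u) = \vv{L}_j \mathcal{F}_\lambda(u)$ for every $u \in \D'(M)$. This last identity follows from Proposition~\ref{prop:invopscommft} applied with $T$ a singleton, and is the bridge that lets us freely commute the projections with the vector fields.

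For the sufficiency direction ($\eqref{eq:ineq_gh} \Rightarrow$ (GH)), suppose \eqref{eq:ineq_gh} holds and let $u \in \D'(M)$ be such that $\vv{L}_j u \in \cinfty(M)$ for every $j$. Given $s > 0$, by Proposition~\ref{prop:charac_smoothness_fourier_proj}(1) there exists $C_s > 0$ such that $\| \mathcal{F}_\lambda(\vv{L}_j u) \|_{L^2(M)} \leq C_s (1 + \lambda)^{-s - \rho}$ for all $\lambda \in \sigma(\Delta)$ and all $j \in \{1, \ldots, r\}$. Applying \eqref{eq:ineq_gh} to $\phi \dfn \mathcal{F}_\lambda(u) \in E_\lambda$, combined with the commutation $\mathcal{F}_\lambda(\vv{L}_j u) = \vv{L}_j \mathcal{F}_\lambda(u)$, gives
\begin{align*}
\| \mathcal{F}_\lambda(u) \|_{L^2(M)} \leq C^{-1} (1 + \lambda)^{\rho} \left( \sum_{j = 1}^r \| \mathcal{F}_\lambda(\vv{L}_j u) \|_{L^2(M)}^2 \right)^{1/2} \leq C^{-1} \sqrt{r} C_s (1 + \lambda)^{-s}
\end{align*}
for all $\lambda \geq \lambda_0$. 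Since there are only finitely many $\lambda \in \sigma(\Delta)$ below $\lambda_0$, the estimate extends (possibly after enlarging the constant) to all $\lambda \in \sigma(\Delta)$, and Proposition~\ref{prop:charac_smoothness_fourier_proj}(1) then yields $u \in \cinfty(M)$.

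For the converse I would argue by contrapositive: assuming \eqref{eq:ineq_gh} fails for every choice of $C, \rho, \lambda_0$, I construct an offending distribution. Concretely, for each $\nu \in \N$ I can find $\lambda_\nu \in \sigma(\Delta)$ with $\lambda_\nu \geq \nu$ and a unit vector $\phi_\nu \in E_{\lambda_\nu}$ satisfying $\bigl( \sum_j \| \vv{L}_j \phi_\nu \|_{L^2(M)}^2 \bigr)^{1/2} < 2^{-\nu}(1 + \lambda_\nu)^{-\nu}$; after passing to a subsequence I may assume $\{\lambda_\nu\}$ is strictly increasing. Setting $u \dfn \sum_\nu \phi_\nu$, Proposition~\ref{prop:charac_smoothness_fourier_proj}(2) shows $u \in \D'(M)$ (each $\|\mathcal{F}_{\lambda_\nu}(u)\|_{L^2(M)} = 1$, bounded by a polynomial in $\lambda_\nu$), while part~(1) shows $u \notin \cinfty(M)$ (the projections do not decay). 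However, for each $j$ the projections $\mathcal{F}_\lambda(\vv{L}_j u) = \vv{L}_j \mathcal{F}_\lambda(u)$ either vanish or equal $\vv{L}_j \phi_\nu$, whose norm is bounded by $2^{-\nu}(1 + \lambda_\nu)^{-\nu}$, hence decay faster than any polynomial, so $\vv{L}_j u \in \cinfty(M)$ by Proposition~\ref{prop:charac_smoothness_fourier_proj}(1). This contradicts (GH).

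The main technical subtlety is the construction of the bad distribution in the necessity direction, specifically calibrating the rates so that $u$ is genuinely distributional but $\vv{L}_j u$ is smooth for every $j$; the powers $2^{-\nu}(1+\lambda_\nu)^{-\nu}$ are chosen precisely so that the Fourier projections of $\vv{L}_j u$ decay faster than every $(1+\lambda)^{-s}$. Everything else is bookkeeping with the eigenvalue estimates of Proposition~\ref{prop:charac_smoothness_fourier_proj}.
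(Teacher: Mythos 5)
Your proposal is correct and follows essentially the same argument as the paper: the same commutation identity $\mathcal{F}_\lambda(\vv{L}_j u) = \vv{L}_j \mathcal{F}_\lambda(u)$ drives the sufficiency direction, and the necessity direction uses the identical contrapositive construction $u = \sum_\nu \phi_\nu$ with the same calibration $2^{-\nu}(1+\lambda_\nu)^{-\nu}$ of the eigenfunction norms. No gaps.
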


\section{Sufficiency for operators subject to commutativity assumptions}\label{sec:proofsufficience}

Our aim in this section is to prove Theorem~\ref{thm:thm15}, which still requires some preparation. For each $\ell \in \{1, \ldots, N\}$ we write
\begin{align*}
  \mathfrak{a}_\ell(t) = \sum_{j = 1}^m a_{\ell j}(t) \vv{X}_j, \quad t \in T,
\end{align*}
which we assume to be not identically zero, hence among $a_{\ell 1}, \ldots, a_{\ell m}$ there are exactly $m^\ell\geq 1$ functions that are $\R$-linearly independent. We denote them by $\alpha_{\ell 1}, \ldots, \alpha_{\ell m^\ell}$: writing the remaining coefficients as linear combinations of these allows us to write $\gr{a}_\ell$ as
\begin{align*}
  \mathfrak{a}_\ell(t) = \sum_{p = 1}^{m^\ell} \alpha_{\ell p}(t) \vv{L}_p^\ell,
\end{align*}
where $\vv{L}_1^\ell, \ldots, \vv{L}_{m^\ell}^\ell$ are linear combinations of $\vv{X}_1,\ldots, \vv{X}_m$, hence also elements of $\gr{g}$. One can prove that $\vv{L}_1^\ell, \ldots, \vv{L}_{m^\ell}^\ell$ are linearly independent, and actually a basis for $\mathcal{L}_\ell$ as defined in~\eqref{eq:Lell} (see Section~\ref{sec:nsa_vectors} where we derive explicit expressions for these vector fields w.r.t.~the choice $\alpha_{\ell p} \dfn a_{\ell j_{p}^\ell}$ for $p \in \{1, \ldots, m^\ell \}$).

Linear independence of $\alpha_{\ell 1}, \ldots, \alpha_{\ell m^\ell}$ means that if we define $D_\ell: T \times \R^{m^\ell} \rarr \R$ by
\begin{align*}
  D_\ell (t, \gamma) &\dfn \left( \sum_{p = 1}^{m^\ell} \alpha_{\ell p}(t) \gamma_p \right)^2, \quad t \in T, \ \gamma \in \R^{m^\ell},
\end{align*}
then for each $\gamma \neq 0$ the function $t \in T \mapsto D_\ell (t, \gamma) \in \R$ cannot be identically zero. We then have, as in the proof of~\cite[Lemma~3.1]{bfp17}:
\begin{Lem} \label{lem:lem_functions} There are constants $\alpha, \delta > 0$ such that for every $\gamma$ with $|\gamma| = 1$ there exists a non-empty open set $A_\gamma \sset T$ with $\mathrm{vol}(A_\gamma) \geq \delta$ such that
  \begin{align*}
    D_\ell(t, \gamma) &> \alpha, \quad \forall t \in A_\gamma.
  \end{align*}
\end{Lem}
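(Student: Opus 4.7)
The plan is to exploit the joint continuity of $D_\ell$ on the compact space $T \times S^{m^\ell - 1}$ together with the pointwise non-triviality of $D_\ell(\cdot, \gamma)$, and then extract uniform bounds via a finite-cover argument.

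First I would view the map $F: T \times S^{m^\ell - 1} \to \R_+$ defined by $F(t,\gamma) \dfn D_\ell(t,\gamma)$ as a continuous, nonnegative function on a compact space. By hypothesis, the functions $\alpha_{\ell 1}, \ldots, \alpha_{\ell m^\ell}$ are $\R$-linearly independent in $\cinfty(T; \R)$, so for each fixed $\gamma \in S^{m^\ell - 1}$ the smooth function $t \mapsto \sum_p \alpha_{\ell p}(t) \gamma_p$ is not identically zero; squaring, neither is $F(\cdot, \gamma)$. Hence for every $\gamma_0 \in S^{m^\ell - 1}$ there exists $t_0 \in T$ with $F(t_0, \gamma_0) > 0$.

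Next I would localize. Fixing such $(t_0, \gamma_0)$ with $F(t_0,\gamma_0) =: 2\alpha_0 > 0$, continuity of $F$ yields an open neighborhood $U_0 \times W_0 \sset T \times S^{m^\ell - 1}$ of $(t_0, \gamma_0)$ on which $F > \alpha_0$. Letting $\gamma_0$ range over $S^{m^\ell - 1}$ produces an open cover $\{W_{\gamma_0}\}$ of the sphere, from which (by compactness) I extract a finite subcover $W_1, \ldots, W_K$, with corresponding open sets $U_1, \ldots, U_K \sset T$ and positive thresholds $\alpha_1, \ldots, \alpha_K$ such that $F(t,\gamma) > \alpha_k$ whenever $(t,\gamma) \in U_k \times W_k$. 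Set
\begin{align*}
\alpha \dfn \min_{1 \leq k \leq K} \alpha_k > 0, \qquad \delta \dfn \min_{1 \leq k \leq K} \mathrm{vol}(U_k) > 0.
\end{align*}

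Finally, for an arbitrary $\gamma \in S^{m^\ell - 1}$, pick $k$ with $\gamma \in W_k$ and define $A_\gamma \dfn \{ t \in T \st D_\ell(t,\gamma) > \alpha \}$. This is open by continuity of $D_\ell(\cdot, \gamma)$, contains $U_k$ (since for $t \in U_k$ we have $F(t,\gamma) > \alpha_k \geq \alpha$), and therefore satisfies $\mathrm{vol}(A_\gamma) \geq \mathrm{vol}(U_k) \geq \delta$, while $D_\ell(t, \gamma) > \alpha$ for all $t \in A_\gamma$ by construction. This yields the claim.

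The only subtle point — and the place where the linear independence hypothesis is decisive — is the initial pointwise non-vanishing of $F(\cdot, \gamma)$, without which no uniform threshold $\alpha$ could possibly exist (one could have $\gamma$'s making $D_\ell(\cdot, \gamma) \equiv 0$). The rest is a straightforward compactness argument; no quantitative control on how $\alpha$ or $\delta$ depend on the $\alpha_{\ell p}$'s is needed for this lemma.
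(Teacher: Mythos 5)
Your argument is correct: joint continuity of $D_\ell$ on $T\times S^{m^\ell-1}$, the pointwise non-vanishing coming from the linear independence of $\alpha_{\ell 1},\ldots,\alpha_{\ell m^\ell}$, and a finite subcover of the sphere give uniform constants $\alpha,\delta$, and your superlevel set $A_\gamma$ (which contains the corresponding $U_k$) does the job. The paper does not spell out a proof but refers to~\cite[Lemma~3.1]{bfp17}, and the compactness--covering argument you give is essentially that standard one, so there is nothing further to add.
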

Of course the inequality above can be extended by positive homogeneity as
\begin{align*}
    D_\ell(t, \gamma) &\geq \alpha |\gamma|^2, \quad \forall t \in A_\gamma, 
\end{align*}
for every $\gamma \in \R^{m^\ell}$, provided $A_\gamma \sset T$ is defined accordingly.

Next we derive the following fundamental inequality, which generalizes~\cite[eqn.~(2.10)]{hp00}.
\begin{Prop} \label{prop:calc_ineq} Given $\delta > 0$ there exists a constant $C > 0$ such that for every open set $A \sset T$ with $\mathrm{vol}(A) \geq \delta$ one has
  \begin{align*}
    \| \psi\|_{L^2(T)}^2 &\leq C \left( \| \psi\|_{L^2(A)}^2 + \| \dd_T \psi\|_{L^2(T)}^2 \right), \quad \forall \psi \in \cinfty(T).
  \end{align*}
\end{Prop}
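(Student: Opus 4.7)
The plan is a standard compactness/contradiction argument, exploiting Rellich's theorem on the compact manifold $T$, together with the key observation that the lower bound on $\mathrm{vol}(A)$ prevents the $L^2(A)$-mass of a nonzero constant from disappearing.

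First I would argue by contradiction. If the statement fails for some $\delta > 0$, then for each $\nu \in \N$ there exist an open set $A_\nu \sset T$ with $\mathrm{vol}(A_\nu) \geq \delta$ and a smooth function $\psi_\nu \in \cinfty(T)$ such that
\begin{align*}
\|\psi_\nu\|_{L^2(T)}^2 > \nu \bigl( \|\psi_\nu\|_{L^2(A_\nu)}^2 + \|\dd_T \psi_\nu\|_{L^2(T)}^2 \bigr).
\end{align*}
After normalizing to $\|\psi_\nu\|_{L^2(T)} = 1$, this gives $\|\psi_\nu\|_{L^2(A_\nu)} \to 0$ and $\|\dd_T \psi_\nu\|_{L^2(T)} \to 0$ as $\nu \to \infty$. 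In particular the sequence $(\psi_\nu)$ is bounded in $H^1(T)$ (that is, with $\|\cdot\|_{\mathscr{H}^1(T)}^2 = \|(I+\Delta_T)\cdot\|_{L^2(T)}^2$ or, equivalently, controlled by $\|\cdot\|_{L^2(T)}^2 + \|\dd_T\cdot\|_{L^2(T)}^2$ via $\langle \Delta_T \psi, \psi\rangle = \|\dd_T\psi\|_{L^2(T)}^2$).

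Next, I would invoke the Rellich compactness of the embedding $H^1(T) \hookrightarrow L^2(T)$ to extract a subsequence (not relabeled) converging in $L^2(T)$ to some $\psi_\infty \in L^2(T)$ with $\|\psi_\infty\|_{L^2(T)} = 1$. Since $\dd_T \psi_\nu \to 0$ in $L^2$ and $\dd_T$ is continuous on distributions, the distributional derivative of $\psi_\infty$ is zero, so $\psi_\infty$ is constant on the connected manifold $T$. Because $T$ has total volume $1$, $|\psi_\infty| \equiv 1$.

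Finally I would extract the contradiction from the condition on the sets $A_\nu$. For each $\nu$,
\begin{align*}
\sqrt{\delta} = |\psi_\infty| \sqrt{\mathrm{vol}(A_\nu)} \leq \|\psi_\infty\|_{L^2(A_\nu)} \leq \|\psi_\infty - \psi_\nu\|_{L^2(T)} + \|\psi_\nu\|_{L^2(A_\nu)},
\end{align*}
and the right-hand side tends to $0$, which is impossible. The main (and only genuinely nontrivial) ingredient is the Rellich compactness $H^1(T) \hookrightarrow L^2(T)$ for compact Riemannian manifolds; the rest is bookkeeping, and the varying sets $A_\nu$ cause no trouble precisely because the volume lower bound $\delta$ interacts rigidly with the pointwise constant limit.
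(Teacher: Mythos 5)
Your argument is correct, but it takes a genuinely different route from the paper. You argue by contradiction and compactness: normalize a putative counterexample sequence, use the Rellich embedding $H^1(T) \hookrightarrow L^2(T)$ to extract an $L^2$-limit, observe that the limit is a unimodular constant (here you use that $T$ is connected and that $\dd V_T$ is normalized, both standing assumptions of the paper), and derive the contradiction from $\|\psi_\infty\|_{L^2(A_\nu)} \geq \sqrt{\delta}\,|\psi_\infty|$ — note only that your displayed chain should start with ``$\sqrt{\delta} \leq$'' rather than ``$=$'', since $\mathrm{vol}(A_\nu) \geq \delta$; this is cosmetic. Crucially, the uniformity of the constant over all admissible $A$ is handled correctly, because the sets $A_\nu$ are allowed to vary along the contradiction sequence and the volume lower bound interacts rigidly with the constant limit. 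The paper instead proceeds constructively: a local inequality $\mathrm{vol}(B)\,\|\psi\|_{L^2(U)}^2 \leq C\bigl(\|\psi\|_{L^2(B)}^2 + \|\dd_T\psi\|_{L^2(U)}^2\bigr)$ is proved in coordinate charts via the Fundamental Theorem of Calculus and H\"older's inequality (Lemma~\ref{lem:calc_ineq_local}), and then propagated over a finite cover of the connected manifold by a chaining argument through overlapping charts. The paper's route is elementary (no Sobolev compactness), in principle yields trackable constants, and its local computation is reused for Lemma~\ref{lem:l2estimvfd}; your route is shorter and more conceptual, at the price of being non-constructive and invoking Rellich--Kondrachov. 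Both are complete proofs of the statement.
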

We start with a local result.
\begin{Lem} \label{lem:calc_ineq_local} For each $t_0 \in T$ there exist $U \sset T$ an open neighborhood of $t_0$ and a constant $C > 0$ such that
  \begin{align*}
    \mathrm{vol}(B) \| \psi\|_{L^2(U)}^2 &\leq C \left( \| \psi\|_{L^2(B)}^2 + \| \dd_T \psi\|_{L^2(U)}^2 \right)
  \end{align*}
  for every open set $B \sset U$ and every $\psi \in \cinfty(U)$.
  \begin{proof} Let $(U; t_1, \ldots, t_n)$ be an oriented coordinate chart of $T$ centered at $t_0$. On $U$ the Riemannian volume form can be written as $\dd V_T = \sqrt{\det g} \ \dd t$, where $\dd t =  \dd t_1 \wedge \cdots \wedge \dd t_n$ and $g = (g_{jk}) \in \cinfty(U, \GL_n(\R))$ is the local expression of the metric~\eqref{eq:gjk}. We may select real smooth vector fields $\vv{Z}_1, \ldots, \vv{Z}_n$ on $U$ forming an orthonormal frame for $TU$, and denote by $\zeta_1, \ldots, \zeta_n \in \cinfty(U; T^*U)$ the corresponding dual coframe, which is of course orthonormal w.r.t.~the cotangent metric. Thus for any $\psi \in \cinfty(U)$ we have
    \begin{align*}
      \dd_T \psi &= \sum_{j = 1}^n \vv{Z}_j \psi \ \zeta_j
    \end{align*}
    hence
    \begin{align*}
      \| \dd_T \psi \|_{L^2(U)}^2 = \int_U \langle \dd_T \psi, \dd_T \psi \rangle_{T^* T} \ \dd V_T = \sum_{j,j' = 1}^n \int_U  \vv{Z}_j \psi \ \overline{\vv{Z}_{j'} \psi} \ \langle \zeta_j, \zeta_{j'} \rangle_{T^* T} \ \dd V_T = \sum_{j = 1}^n \int_U |\vv{Z}_j \psi|^2 \ \dd V_T.
    \end{align*}
    Moreover, there exists a smooth invertible matrix $(\beta_{jk}) \in \cinfty(U, \GL_n(\R))$ relating both frames:
    \begin{align*}
      \frac{\del}{\del t_j} &= \sum_{k = 1}^n \beta_{jk} \vv{Z}_k, \quad j \in \{1, \ldots, n\}.
    \end{align*}
    After shrinking $U$ if necessary we may assume that there exists a constant $c > 0$ such that
    \begin{align}
      c^{-1} \leq \sqrt{\det g} \leq c, \quad |\beta_{jk}| \leq c \quad \text{on $U$} \label{eq:bddness_everything}
    \end{align}
    and actually that $U \cong U_\epsilon \dfn (-\epsilon, \epsilon)^n$ for some $\epsilon > 0$.
    
    Now we perform some computations in coordinates. Given $a = (a_1, \ldots, a_n), b = (b_1, \ldots, b_n) \in U_\epsilon$ let
    \begin{align*}
      (a, s, b)_j &\dfn (a_1, \ldots, a_{j - 1}, s, b_{j + 1}, \ldots, b_n) \in U_\epsilon, \quad s \in (-\epsilon, \epsilon), \ j \in \{1, \ldots, n \},
    \end{align*}
    so $n$ applications of the Fundamental Theorem of Calculus ensure that for $\psi \in \cinfty(U_\epsilon)$ we have
    \begin{align*}
      |\psi(a)| \leq |\psi(b)| + \sum_{j = 1}^n \left| \int_{a_j}^{b_j} \frac{\del \psi}{\del t_j} ((a, s, b)_j)  \dd s \right| \leq |\psi(b)| + \sum_{j = 1}^n \int_{-\epsilon}^{\epsilon} \left| \frac{\del \psi}{\del t_j} ((a, s, b)_j) \right| \dd s
    \end{align*}
    and then by H{\"o}lder's inequality
    \begin{align*}
      |\psi(a)|^2 &\leq (n + 1) |\psi(b)|^2 + (n + 1) \sum_{j = 1}^n \left( \int_{-\epsilon}^{\epsilon} \left| \frac{\del \psi}{\del t_j} ((a, s, b)_j) \right| \dd s \right)^2 \\
      &\leq (n + 1) |\psi(b)|^2 + 2 \epsilon (n + 1) \sum_{j = 1}^n \int_{-\epsilon}^{\epsilon} \left| \frac{\del \psi}{\del t_j} ((a, s, b)_j) \right|^2 \dd s.
    \end{align*}
    Regarding the Lebesgue measure on $U_\epsilon \sset \R^n$ (i.e.~the one induced by $\dd t$), for $B \sset U \cong U_\epsilon$ an open set we integrate both sides of this inequality w.r.t.~$b \in B$, yielding
    \begin{align*}
      m(B) |\psi(a)|^2 &\leq (n + 1) \int_B |\psi(b)|^2 \dd b + 2 \epsilon (n + 1) \sum_{j = 1}^n \int_B \int_{-\epsilon}^{\epsilon} \left| \frac{\del \psi}{\del t_j} ((a, s, b)_j) \right|^2 \dd s \dd b
    \end{align*}
    -- where we provisionally denote $m(B) \dfn \int_B \dd t$ -- and now integrating the latter w.r.t.~$a \in U_\epsilon$ we have
    \begin{align*}
      m(B) \int_{U_\epsilon} |\psi(a)|^2 \dd a &\leq (2 \epsilon)^n (n + 1) \int_B |\psi(b)|^2 \dd b + 2 \epsilon (n + 1) \sum_{j = 1}^n \int_{U_\epsilon} \int_B \int_{-\epsilon}^{\epsilon} \left| \frac{\del \psi}{\del t_j} ((a, s, b)_j) \right|^2 \dd s \dd b \dd a.
    \end{align*}
    Notice, however, that
    \begin{align*}
      \int_{U_\epsilon} \int_B \int_{-\epsilon}^{\epsilon} \left| \frac{\del \psi}{\del t_j} ((a, s, b)_j) \right|^2 \dd s \dd b \dd a &\leq \int_{U_\epsilon} \int_{U_\epsilon} \int_{-\epsilon}^{\epsilon} \left| \frac{\del \psi}{\del t_j} (a_1, \ldots, a_{j - 1}, s, b_{j + 1}, \ldots, b_n) \right|^2 \dd s \dd b \dd a \\
      &= (2 \epsilon)^{n + 1}  \int_{U_\epsilon} \left| \frac{\del \psi}{\del t_j} (t) \right|^2 \dd t.
    \end{align*}
    We conclude that for some constant $C_1 > 0$ depending only on $n$ and $\epsilon$ we have
    \begin{align}
      m(B) \int_{U} |\psi|^2 \dd t &\leq C_1 \left( \int_B |\psi|^2 \dd t + \sum_{j = 1}^n \int_{U} \left| \frac{\del \psi}{\del t_j} \right|^2 \dd t \right). \label{eq:calc_ineq_euclidean}
    \end{align}
    
    Now it is plain from~\eqref{eq:bddness_everything} and previous remarks that
    \begin{align*}
      m(B) = \int_B \dd t \geq c^{-1} \int_B \sqrt{\det g} \ \dd t = c^{-1} \int_B \dd V_T = c^{-1} \mathrm{vol}(B)
    \end{align*}
    and by the same token
    \begin{align*}
      c^{-1} \int |\psi|^2 \dd V_T \leq \int |\psi|^2 \dd t \leq c \int |\psi|^2 \dd V_T.
    \end{align*}
    Moreover, for $j \in \{1, \ldots, n\}$,
    \begin{align*}
      \int_{U} \left| \frac{\del \psi}{\del t_j} \right|^2 \dd t = \int_{U} \left| \sum_{k = 1}^n \beta_{jk} \vv{Z}_k \psi \right|^2 \dd t \leq  n c^3 \sum_{k = 1}^n \int_{U} |\vv{Z}_k \psi|^2 \dd V_T = n c^3 \| \dd_T \psi\|_{L^2(U)}^2.
    \end{align*}
    Plugging everything back into~\eqref{eq:calc_ineq_euclidean} yields our conclusion at once.
  \end{proof}
\end{Lem}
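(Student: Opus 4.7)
The plan is to work in an oriented coordinate chart $(U; t_1, \ldots, t_n)$ of $T$ centered at $t_0$, reducing the statement to a Euclidean Poincaré-type inequality on a cube and then translating back to Riemannian quantities. After shrinking $U$ I may identify it with $U_\epsilon \dfn (-\epsilon, \epsilon)^n$; on it, $\dd V_T = \sqrt{\det g} \, \dd t$ where $\dd t$ is Lebesgue measure. By further shrinking I may assume the existence of a constant $c > 0$ such that $c^{-1} \leq \sqrt{\det g} \leq c$ on $U$, and also such that the matrix $(\beta_{jk}) \in \cinfty(U; \GL_n(\R))$ expressing the coordinate frame $\del/\del t_j$ in terms of a chosen orthonormal frame $\vv{Z}_1, \ldots, \vv{Z}_n$ for $TU$ has entries bounded by $c$. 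With $\zeta_1, \ldots, \zeta_n$ the dual coframe (also orthonormal in $T^*U$) one computes $\|\dd_T \psi\|_{L^2(U)}^2 = \sum_{j=1}^n \|\vv{Z}_j \psi\|_{L^2(U)}^2$, and the bounds on $(\beta_{jk})$ yield $\sum_j \|\del\psi/\del t_j\|_{L^2(U,\dd t)}^2 \leq C_0 \|\dd_T\psi\|_{L^2(U)}^2$ for a constant $C_0$ depending only on $n$ and $c$.

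The analytic core is a zig-zag telescoping argument on the cube. For $a, b \in U_\epsilon$, writing $(a, s, b)_j \dfn (a_1, \ldots, a_{j-1}, s, b_{j+1}, \ldots, b_n)$ and applying the Fundamental Theorem of Calculus successively in each coordinate yields
\begin{align*}
  |\psi(a)| &\leq |\psi(b)| + \sum_{j=1}^n \int_{-\epsilon}^{\epsilon} \left| \frac{\del \psi}{\del t_j}((a, s, b)_j) \right| \dd s,
\end{align*}
and squaring followed by an elementary application of Hölder's inequality (both on the finite sum and on each $\dd s$-integral) produces
\begin{align*}
  |\psi(a)|^2 &\leq (n+1) |\psi(b)|^2 + 2\epsilon(n+1) \sum_{j=1}^n \int_{-\epsilon}^{\epsilon} \left| \frac{\del \psi}{\del t_j}((a, s, b)_j) \right|^2 \dd s.
\end{align*}

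Given an open set $B \sset U_\epsilon$, I would integrate both sides in $b$ over $B$, producing on the left $m(B) |\psi(a)|^2$ (with $m(B) \dfn \int_B \dd t$) and on the right a term $(n+1)\|\psi\|_{L^2(B,\dd t)}^2$ plus derivative terms, and then integrate once more in $a$ over $U_\epsilon$. For the latter, enlarging the $a$-integration (and, where needed, the $b$-integration) from $U_\epsilon$ or $B$ to the full cube and applying Fubini turns each derivative term into $(2\epsilon)^{n+1} \|\del\psi/\del t_j\|_{L^2(U_\epsilon,\dd t)}^2$. This gives the Euclidean inequality
\begin{align*}
  m(B) \int_{U_\epsilon} |\psi|^2 \dd t &\leq C_1 \left( \int_B |\psi|^2 \dd t + \sum_{j=1}^n \int_{U_\epsilon} \left| \frac{\del \psi}{\del t_j} \right|^2 \dd t \right)
\end{align*}
for a constant $C_1$ depending only on $n$ and $\epsilon$.

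Finally, the passage from Lebesgue to Riemannian quantities is straightforward: the bounds on $\sqrt{\det g}$ give $m(B) \geq c^{-1}\mathrm{vol}(B)$ and $\|\psi\|_{L^2(U,\dd t)}^2$ is equivalent to $\|\psi\|_{L^2(U)}^2$ up to the factor $c$, while the estimate on $(\beta_{jk})$ converts $\sum_j \|\del \psi/\del t_j\|_{L^2(U,\dd t)}^2$ into $\|\dd_T \psi\|_{L^2(U)}^2$ up to a multiplicative constant. Absorbing all these constants into a single $C > 0$ yields the stated inequality. The only genuinely nontrivial step is the zig-zag FTC argument producing the Euclidean inequality; all remaining work is bookkeeping of equivalent constants from the bounded geometry on a small coordinate patch.
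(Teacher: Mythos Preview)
Your proposal is correct and follows essentially the same approach as the paper's proof: the same coordinate-chart reduction to a cube $U_\epsilon$, the same zig-zag FTC argument with the points $(a,s,b)_j$, the same squaring-plus-H\"older step, the same double integration in $b \in B$ then $a \in U_\epsilon$ followed by Fubini, and the same conversion between Lebesgue and Riemannian quantities via bounds on $\sqrt{\det g}$ and the transition matrix $(\beta_{jk})$. There is nothing to add.
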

\begin{proof}[Proof of Proposition~\ref{prop:calc_ineq}] Now we globalize Lemma~\ref{lem:calc_ineq_local}. Since $T$ is compact we may select a finite collection of open sets $\{U_i\}_{i \in I}$ covering $T$, each one of them satisfying the conclusion of Lemma~\ref{lem:calc_ineq_local}, namely: for each $i \in I$ there exists a constant $C_i > 0$ such  that
  \begin{align}
    \mathrm{vol}(B) \| \psi\|_{L^2(U_i)}^2 &\leq C_i \left( \| \psi\|_{L^2(B)}^2 + \| \dd_T \psi\|_{L^2(U_i)}^2 \right) \label{eq:ineq_local}
  \end{align}
  for every open set $B \sset U_i$ and every $\psi \in \cinfty(U_i)$. Let $\delta > 0$ and let $A \sset T$ be an open set such that $\mathrm{vol}(A) \geq \delta$. Denoting by $q$ the number of elements in $I$, since
  \begin{align*}
    A = \bigcup_{i \in I} A \cap U_i &\Longrightarrow \mathrm{vol}(A) \leq \sum_{i \in I} \mathrm{vol}(A \cap U_i)
  \end{align*}
  there must exist $i_0 \in I$ such that $\mathrm{vol}(A \cap U_{i_0}) \geq \delta/q$. Let $I_0 \dfn \{ i_0\}$ and define inductively
  \begin{align*}
    I_{\nu + 1} &\dfn \left \{ i \in I \setminus \bigcup_{\kappa = 1}^\nu I_\kappa \st \text{$U_i \cap U_{i'} \neq \eset$ for some $i' \in I_\nu$} \right \}, \quad \nu \in \Z_+,
  \end{align*}
  thus forming a (obviously finite) partition of $I$ (recall that $T$ is assumed connected).

  We claim that for each $\nu \in \Z_+$ there exists $\tilde{C}_\nu > 0$ depending on $\delta > 0$, but not on $A$, such that
  \begin{align*}
    \| \psi\|_{L^2(U_i)}^2 &\leq \tilde{C}_\nu \left( \| \psi\|_{L^2(A)}^2 + \| \dd_T \psi\|_{L^2(T)}^2 \right), \quad \forall \psi \in \cinfty(T), \ \forall i \in I_\nu,
  \end{align*}
  which we prove by induction on $\nu$. The case $\nu = 0$ follows since $I_0 = \{i_0\}$ and by~\eqref{eq:ineq_local} we have
  \begin{align*}
    \| \psi\|_{L^2(U_{i_0})}^2 \leq \frac{C_{i_0}}{\mathrm{vol}(A \cap U_{i_0})}\left( \| \psi\|_{L^2(A \cap U_{i_0})}^2 + \| \dd_T \psi\|_{L^2(U_i)}^2 \right) \leq \frac{q C_{i_0}}{\delta}\left( \| \psi\|_{L^2(A)}^2 + \| \dd_T \psi\|_{L^2(T)}^2 \right).
  \end{align*}
  Now, assuming the claim proved for $\nu$, let $i \in I_{\nu + 1}$ and $i' \in I_\nu$ such that $U_i \cap U_{i'} \neq \eset$: we have
  \begin{align*}
    \| \psi\|_{L^2(U_i)}^2 &\leq \frac{C_i}{\mathrm{vol}(U_i \cap U_{i'})}\left( \| \psi\|_{L^2(U_i \cap U_{i'})}^2 + \| \dd_T \psi\|_{L^2(U_i)}^2 \right) \\
    &\leq \frac{C_i}{\mathrm{vol}(U_i \cap U_{i'})}\left( \| \psi\|_{L^2(U_{i'})}^2 + \| \dd_T \psi\|_{L^2(U_i)}^2 \right) \\
    &\leq \frac{C_i}{\mathrm{vol}(U_i \cap U_{i'})}\left( \tilde{C}_\nu \left( \| \psi\|_{L^2(A)}^2 + \| \dd_T \psi\|_{L^2(T)}^2 \right) + \| \dd_T \psi\|_{L^2(U_i)}^2 \right) \\
    &\leq \frac{C_i (\tilde{C}_\nu + 2)}{\epsilon} \left( \| \psi\|_{L^2(A)}^2 + \| \dd_T \psi\|_{L^2(T)}^2 \right)
  \end{align*}
  again by~\eqref{eq:ineq_local}, where $\epsilon \dfn \min \{ \mathrm{vol}(U_i \cap U_{i'}) \st i, i' \in I, \ U_i \cap U_{i'} \neq \eset \} > 0$ only depends on the finite covering $\{ U_i \}_{i \in I}$. Since there are only finitely many such $i$, our claim is proved.

  To finish, for $\psi \in \cinfty(T)$ we have
  \begin{align*}
    \| \psi\|_{L^2(T)}^2 \leq \sum_{i \in I} \| \psi\|_{L^2(U_i)}^2 = \sum_\nu \sum_{i \in I_\nu} \| \psi\|_{L^2(U_i)}^2 \leq q \max_\nu \tilde{C}_\nu \left( \| \psi\|_{L^2(A)}^2 + \| \dd_T \psi\|_{L^2(T)}^2 \right)
  \end{align*}
  where the constant depends on $\delta$ but not on $A$ or $\psi$; one could object that it depends on the partition $\{I_\nu\}_\nu$ of $I$ -- which apparently depends on $A$, but actually depends (by construction) on $i_0$ only: since $I$ is finite, one could then further maximize these constants over all the possible choices of initial index $i_0 \in I$, hence finally getting rid of the dependence on $A$.
\end{proof}

On passing, we point out that the argument in the proof of Lemma~\ref{lem:calc_ineq_local} also yields the following result, which we will need later on:
\begin{Lem} \label{lem:l2estimvfd} Let $\vv{W}$ be any vector field globally defined on $T$. Then there exists $C > 0$ such that
  \begin{align*}
    \| \vv{W} \psi \|_{L^2(T)} &\leq C \| \dd_T \psi \|_{L^2(T)}, \quad \forall \psi \in \cinfty(T).
  \end{align*}
  \begin{proof} We briefly recall the main argument. In a coordinate open set $U \sset T$ we may write
    \begin{align*}
      \vv{W} &= \sum_{k = 1}^n \omega_k \vv{Z}_k
    \end{align*}
    where once more $\vv{Z}_1, \ldots, \vv{Z}_n$ is an orthonormal frame for $TU$ and $\omega_1, \ldots, \omega_n \in \cinfty(U)$ -- which we may assume to be bounded by shrinking $U$ if necessary. Then for $\psi \in \cinfty(T)$ we have
    \begin{align*}
      \|\vv{W} \psi \|_{L^2(U)}^2 = \int_U \left| \sum_{k = 1}^n \omega_k \vv{Z}_k \psi \right|^2 \dd V_T \leq \left( \sup \sum_{k' = 1}^n |\omega_{k'}|^2 \right) \sum_{k = 1}^n \int_U |\vv{Z}_k \psi|^2 \dd V_T = C' \| \dd_T \psi \|_{L^2(U)}^2.
    \end{align*}
    Since we can cover $T$ by finitely many such $U$ the result follows.
  \end{proof}
\end{Lem}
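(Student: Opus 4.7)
My plan is to prove this by localization followed by a compactness argument, exploiting the fact that the estimate is pointwise in nature.

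First, I would fix any $t_0 \in T$ and choose a coordinate open neighborhood $U$ of $t_0$ on which there exists an orthonormal frame $\vv{Z}_1, \ldots, \vv{Z}_n$ for $TU$ (just as in the proof of Lemma~\ref{lem:calc_ineq_local}). On this chart one can uniquely write $\vv{W}|_U = \sum_{k=1}^n \omega_k \vv{Z}_k$ with $\omega_k \in \cinfty(U)$, and after shrinking $U$ if necessary the coefficients $\omega_k$ become bounded on $U$.

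The key pointwise identity is that, since $\{\vv{Z}_k\}$ is orthonormal, its dual coframe $\{\zeta_k\} \sset \cinfty(U;T^*U)$ is orthonormal w.r.t.~the cotangent metric, and $\dd_T\psi = \sum_{k} (\vv{Z}_k\psi)\,\zeta_k$, so that pointwise
\begin{align*}
  |\dd_T \psi|_{T^*T}^2 &= \sum_{k=1}^n |\vv{Z}_k \psi|^2.
\end{align*}
Applying Cauchy--Schwarz to $\vv{W}\psi = \sum_k \omega_k \vv{Z}_k\psi$ then yields, on $U$,
\begin{align*}
  |\vv{W}\psi|^2 &\leq \left( \sum_{k=1}^n |\omega_k|^2 \right) \sum_{k=1}^n |\vv{Z}_k\psi|^2 \leq C_U\, |\dd_T \psi|_{T^*T}^2,
\end{align*}
where $C_U \dfn \sup_U \sum_k |\omega_k|^2 < \infty$. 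Integrating w.r.t.~$\dd V_T$ gives $\|\vv{W}\psi\|_{L^2(U)}^2 \leq C_U \|\dd_T \psi\|_{L^2(U)}^2$ for every $\psi \in \cinfty(T)$.

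To globalize, I would invoke compactness of $T$ to extract a finite cover $\{U_i\}_{i \in I}$ by such neighborhoods, each yielding a constant $C_i$. Setting $C \dfn (\sum_i C_i)^{1/2}$ and summing the local estimates gives
\begin{align*}
  \|\vv{W}\psi\|_{L^2(T)}^2 \leq \sum_{i \in I} \|\vv{W}\psi\|_{L^2(U_i)}^2 \leq \left(\sum_{i \in I} C_i\right) \|\dd_T \psi\|_{L^2(T)}^2,
\end{align*}
completing the proof. There is no real obstacle here: unlike Proposition~\ref{prop:calc_ineq}, the estimate is pointwise (no integration by parts, no boundary terms, no lack-of-control on low modes), so the argument is essentially a linear-algebra bound followed by a partition-of-unity-style gluing. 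The only thing to be a little careful about is the comparison between the coordinate expression of $\vv{W}$ and the orthonormal frame $\vv{Z}_k$, which is why working directly with an orthonormal frame (rather than $\partial/\partial t_j$) avoids having to introduce the Gram matrix.
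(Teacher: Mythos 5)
Your argument is essentially identical to the paper's: write $\vv{W}$ locally in an orthonormal frame with bounded coefficients, apply the pointwise Cauchy--Schwarz bound together with the identity $|\dd_T\psi|^2 = \sum_k |\vv{Z}_k\psi|^2$, integrate, and conclude by summing over a finite cover of the compact manifold $T$. The proof is correct and matches the paper's reasoning, differing only in making the pointwise step and the final summation slightly more explicit.
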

  
All of this allows us to prove the following:
\begin{Prop} \label{prop:final_inequality} Under the hypotheses of Theorem~\ref{thm:thm15} there exist $C, \rho > 0$ and $\lambda_0 \in \sigma(\Delta_G)$ such that
  \begin{align}
    \langle P \psi, \psi \rangle_{L^2(T\times G)} &\geq C(1 + \lambda)^{-\rho} \| \psi \|^2_{L^2(T\times G)} , \quad \forall \psi \in \cinfty(T; E^G_\lambda), \ \lambda \geq \lambda_0. \label{eq:InequalitypsiPpsi}
  \end{align}
  \begin{proof} By hypothesis~\eqref{it:thm15_hyp1}, the set of left-invariant vector fields $\mathcal{L}_\ell$ acts as a family of commuting, skew-symmetric -- hence normal -- linear endomorphisms of $E_\lambda^G$ for each $\lambda \in \sigma(\Delta_G)$, which then admits an orthonormal basis
    \begin{align*}
      \phi_1^{\lambda, \ell}, \ldots, \phi_{d_\lambda^G}^{\lambda, \ell} \in E_\lambda^G
    \end{align*}
    which are common eigenvectors to all operators in $\mathcal{L}_\ell$; their associated eigenvalues are purely imaginary
    \begin{align*}
      \vv{L}_p^\ell \phi_i^{\lambda, \ell} &= \sqrt{-1} \gamma_{i,p}^{\lambda, \ell}  \phi_i^{\lambda, \ell}, \quad \gamma_{i,p}^{\lambda, \ell} \in \R,
    \end{align*}
    and we  may bound their absolute values thanks to the following easy remark.
    \begin{Lem} \label{lem:bounding_eigenv_livf} For every $\vv{X} \in \gr{g}$ we have
      \begin{align*}
        \| \vv{X} \phi \|_{L^2(G)} &\leq \| \vv{X} \|_{\gr{g}} \lambda^{1/2} \| \phi \|_{L^2(G)}, \quad \forall \phi \in E_\lambda^G, \ \forall \lambda \in \sigma(\Delta_G)
      \end{align*}
      where $\| \cdot \|_{\gr{g}}$ is the norm on $\gr{g}$ induced by the underlying $\ad$-invariant inner product.
      \begin{proof}[Proof of Lemma~\ref{lem:bounding_eigenv_livf}] We may assume w.l.o.g.~$\vv{X} \neq 0$. Let then $\vv{X}_1, \ldots, \vv{X}_m$ be an orthonormal basis for $\gr{g}$ such that $\vv{X}_1 = \vv{X}/ \| \vv{X} \|_{\gr{g}}$. As the sum of their squares equals $- \Delta_G$~\eqref{eq:DeltaG_SS} we have, for $\phi \in E_\lambda^G$,
        \begin{align*}
          \| \vv{X}_1 \phi \|_{L^2(G)}^2 \leq \sum_{j = 1}^m \| \vv{X}_j \phi \|_{L^2(G)}^2 = - \sum_{j = 1}^m \langle \vv{X}_j^2 \phi, \phi \rangle_{L^2(G)} = \langle \Delta_G \phi, \phi \rangle_{L^2(G)} = \lambda \| \phi \|^2_{L^2(G)}
        \end{align*}
        from which our claim follows.
      \end{proof}
    \end{Lem}
    It follows immediately that
    \begin{align*}
      |\gamma_{i, p}^{\lambda, \ell}|^2 &\leq \| \vv{L}_p^\ell \|_{\gr{g}}^2 \lambda.
    \end{align*}

    For each $i, i' \in \{1, \ldots, d_\lambda^G \}$ and $p, p' \in \{1, \ldots, m^\ell\}$:
    \begin{align*}
      \left \langle \vv{L}_p^\ell \phi_i^{\lambda, \ell}, \vv{L}_{p'}^\ell \phi_{i'}^{\lambda, \ell} \right \rangle_{L^2(G)} = \gamma_{i, p}^{\lambda, \ell} \gamma_{i', p'}^{\lambda, \ell} \left \langle  \phi_i^{\lambda, \ell}, \phi_{i'}^{\lambda, \ell} \right \rangle_{L^2(G)} = \delta_{i i'} \gamma_{i, p}^{\lambda, \ell} \gamma_{i', p'}^{\lambda, \ell}
    \end{align*}
    so in particular for each given $t \in T$ we have
    \begin{align*}
      \left \langle \gr{a}_\ell(t)  \phi_i^{\lambda, \ell}, \gr{a}_\ell(t)  \phi_{i'}^{\lambda, \ell} \right \rangle_{L^2(G)} &= \sum_{p, p' = 1}^{m^\ell} \int_G \alpha_{\ell p}(t) \alpha_{\ell p'}(t) (\vv{L}_p^\ell \phi_i^{\lambda, \ell})(x) \overline{(\vv{L}_{p'}^\ell \phi_{i'}^{\lambda, \ell})(x)} \dd V_G(x)  \\
      &= \sum_{p, p' = 1}^{m^\ell} \alpha_{\ell p}(t) \alpha_{\ell p'}(t) \left \langle \vv{L}_p^\ell \phi_i^{\lambda, \ell}, \vv{L}_{p'}^\ell \phi_{i'}^{\lambda, \ell} \right \rangle_{L^2(G)} \\
      &= \sum_{p, p' = 1}^{m^\ell} \alpha_{\ell p}(t) \alpha_{\ell p'}(t) \delta_{i i'} \gamma_{i, p}^{\lambda, \ell} \gamma_{i', p'}^{\lambda, \ell} \\
      &= \delta_{i i'}D_\ell (t, \gamma_{i}^{\lambda, \ell})
    \end{align*}
    where $\gamma_{i}^{\lambda, \ell} \in \R^{m^\ell}$ is defined in the obvious manner.
    
    A general $\psi \in \cinfty(T; E_\lambda^G)$ is written as, given $\ell \in \{1, \ldots, N\}$,
    \begin{align*}
      \psi &= \sum_{i = 1}^{d_\lambda^G} \psi_i^\ell \otimes \phi_i^{\lambda, \ell}
    \end{align*}
    so for each $t \in T$ given we have that
    \begin{align}
      \| \gr{a}_\ell(t) \psi(t) \|_{L^2(G)}^2 &= \int_G \left| \sum_{i = 1}^{d^G_\lambda} \psi_i^\ell(t) (\gr{a}_\ell(t) \phi_i^{\lambda, \ell})(x) \right|^2 \dd V_G(x) \nonumber \\
      &= \sum_{i, i' = 1}^{d^G_\lambda} \int_G \psi_i^\ell(t) \overline{\psi_{i'}^\ell(t)} (\gr{a}_\ell(t) \phi_i^{\lambda, \ell})(x) \overline{(\gr{a}_\ell(t) \phi_{i'}^{\lambda, \ell})(x)} \dd V_G(x) \nonumber \\
      &= \sum_{i, i' = 1}^{d^G_\lambda} \psi_i^\ell(t) \overline{\psi_{i'}^\ell(t)} \left \langle \gr{a}_\ell(t)  \phi_i^{\lambda, \ell}, \gr{a}_\ell(t)  \phi_{i'}^{\lambda, \ell} \right \rangle_{L^2(G)} \nonumber \\
      &= \sum_{i, i' = 1}^{d^G_\lambda} \psi_i^\ell(t) \overline{\psi_{i'}^\ell(t)} \delta_{i i'}D_\ell (t, \gamma_{i}^{\lambda, \ell}) \nonumber \\
      &= \sum_{i = 1}^{d^G_\lambda} |\psi_i^\ell(t)|^2 D_\ell (t, \gamma_{i}^{\lambda, \ell}) \label{eq:Digualdade}
    \end{align}
    but also
    \begin{align*}
      \| \vv{L}_p^\ell \psi(t) \|_{L^2(G)}^2 = \int_G \left| \sum_{i = 1}^{d^G_\lambda} \psi_i^\ell(t) (\vv{L}_p^\ell \phi_i^{\lambda, \ell})(x) \right|^2 \dd V_G(x) = \sum_{i = 1}^{d^G_\lambda} |\psi_i^\ell(t)|^2 |\gamma_{i,_p}^{\lambda, \ell}|^2.
    \end{align*}

    Recall that $\vv{L}_1^\ell, \ldots, \vv{L}_{m^\ell}^\ell$ form a basis for $\mathcal{L}_\ell$~\eqref{eq:Lell} for each $\ell \in \{1, \ldots, N\}$, so in particular the set
    \begin{align}
      \left \{ \vv{L}_p^\ell \st p \in \{1, \ldots, m^\ell \}, \ \ell \in \{1, \ldots, N\} \right \} \label{eq:generators}
    \end{align}
    generates $\Span_\R \mathcal{L}$. By Lemma~\ref{lem:Lspanlie}, our hypothesis~\eqref{it:thm15_hyp2} of global hypoellipticity of $\mathcal{L}$ in $G$ entails the same property for $\Span_\R \mathcal{L}$ and hence for~\eqref{eq:generators}. As these commute with $\Delta_G$, Proposition~\ref{prop:ghfinitevfs} then provides us constants $C, \rho > 0$ and $\lambda_0 \in \sigma(\Delta_G)$ such that
    \begin{align}
      \left( \sum_{\ell = 1}^N \sum_{p = 1}^{m^\ell} \|\vv{L}_p^\ell \phi \|_{L^2(G)}^2 \right)^{\frac{1}{2}} &\geq C(1 + \lambda)^{-\rho} \| \phi \|_{L^2(G)} , \quad \forall \phi \in E^G_\lambda, \ \lambda \geq \lambda_0. \label{eq:diophantine}
    \end{align}

    Fix $\lambda \geq \lambda_0$. We apply~\eqref{eq:diophantine} to $\phi = \psi(t)$, for some $t \in T$ given
    \begin{align*}
      \| \psi(t) \|_{L^2(G)}^2 &\leq C^{-2} (1 + \lambda)^{2 \rho} \sum_{\ell = 1}^N \sum_{p = 1}^{m^\ell} \|\vv{L}_p^\ell \psi(t) \|_{L^2(G)}^2 \\
      &= C^{-2} (1 + \lambda)^{2 \rho} \sum_{\ell = 1}^N \sum_{p = 1}^{m^\ell} \sum_{i = 1}^{d^G_\lambda} |\psi_i^\ell(t)|^2 |\gamma_{i,_p}^{\lambda, \ell}|^2 \\
      &= C^{-2} (1 + \lambda)^{2 \rho} \sum_{\ell = 1}^N \sum_{i = 1}^{d^G_\lambda} |\psi_i^\ell(t)|^2 |\gamma_{i}^{\lambda, \ell}|^2.
    \end{align*}
    and then integrate both sides over $T$, yielding
    \begin{align}
      \| \psi \|_{L^2(T\times G)}^2 &\leq C^{-2} (1 + \lambda)^{2 \rho} \sum_{\ell = 1}^N \sum_{i = 1}^{d^G_\lambda} \int_T |\psi_i^\ell(t)|^2 |\gamma_{i}^{\lambda, \ell}|^2 \dd V_T(t). \label{eq:psinorm}
    \end{align}
    Let us work out the last integral above. By Lemma~\ref{lem:lem_functions} there are constants $\alpha, \delta > 0$ such that for every $\ell \in \{1, \ldots, N\}$ and every $i \in \{1, \ldots, d_\lambda^G\}$ fixed there exists a non-empty open set $A_{i}^{\lambda, \ell} \sset T$ with $\mathrm{vol}(A_{i}^{\lambda, \ell}) \geq \delta$ such that
    \begin{align*}
      D_\ell(t, \gamma_{i}^{\lambda, \ell}) &\geq \alpha |\gamma_{i}^{\lambda, \ell}|^2, \quad \forall t \in A_{i}^{\lambda, \ell}.
    \end{align*}
    Then by Proposition~\ref{prop:calc_ineq} there exists $C_1 > 0$ depending on $\delta$ but not on any other parameters such that
    \begin{align*}
      \int_T |\psi_i^\ell(t)|^2 |\gamma_{i}^{\lambda, \ell}|^2 \dd V_T(t) &\leq C_1 \left( \int_{A_{i}^{\lambda, \ell}} |\psi_i^\ell(t)|^2 |\gamma_{i}^{\lambda, \ell}|^2 \dd V_T(t) + \left \| \dd_T \left(|\gamma_{i}^{\lambda, \ell}| \psi_i^\ell \right) \right \|_{L^2(T)}^2 \right) \\
      &\leq C_1 \left( \alpha^{-1} \int_{A_{i}^{\lambda, \ell}} |\psi_i^\ell(t)|^2  D_\ell(t, \gamma_{i}^{\lambda, \ell}) \dd V_T(t) + |\gamma_{i}^{\lambda, \ell}|^2 \| \dd_T \psi_i^\ell \|_{L^2(T)}^2 \right) \\
      &\leq C_1 \left( \alpha^{-1} \int_T |\psi_i^\ell(t)|^2  D_\ell(t, \gamma_{i}^{\lambda, \ell}) \dd V_T(t) + B_\ell \lambda \| \dd_T \psi_i^\ell \|_{L^2(T)}^2 \right)
    \end{align*}
    where, recalling the conclusion after Lemma~\ref{lem:bounding_eigenv_livf},
    \begin{align*}
      B_\ell &\dfn \sum_{p = 1}^{m^\ell} \| \vv{L}_p^\ell \|_{\gr{g}}^2.
    \end{align*}
   It follows from~\eqref{eq:Digualdade} that
    \begin{align*}
      \sum_{i = 1}^{d^G_\lambda} \int_T |\psi_i^\ell(t)|^2 |\gamma_{i}^{\lambda, \ell}|^2 \dd V_T(t) &\leq C_1 \left( \alpha^{-1} \int_T \sum_{i = 1}^{d^G_\lambda} |\psi_i^\ell(t)|^2  D_\ell(t, \gamma_{i}^{\lambda, \ell}) \dd V_T(t) + B_\ell \lambda \sum_{i = 1}^{d^G_\lambda} \| \dd_T \psi_i^\ell \|_{L^2(T)}^2 \right) \\
      &= C_1 \left( \alpha^{-1} \int_T \| \gr{a}_\ell (t) \psi(t) \|_{L^2(G)}^2  \dd V_T(t) + B_\ell \lambda \left \langle \Delta_T^\sharp \psi, \psi \right \rangle_{L^2(T \times G)} \right) \\
      &= C_1 \left( \alpha^{-1} \| \gr{a}_\ell (t, \vv{X}) \psi \|_{L^2(T \times G)}^2  + B_\ell \lambda \left \langle \Delta_T^\sharp \psi, \psi \right \rangle_{L^2(T \times G)} \right)
    \end{align*}
    since
    \begin{align*}
      \sum_{i = 1}^{d^G_\lambda} \| \dd_T \psi_i^\ell \|_{L^2(T)}^2 = \sum_{i = 1}^{d^G_\lambda} \langle \dd_T \psi_i^\ell , \dd_T \psi_i^\ell \rangle_{L^2(T)} = \sum_{i = 1}^{d^G_\lambda} \langle \Delta_T \psi_i^\ell , \psi_i^\ell \rangle_{L^2(T)} = \left \langle \Delta_T^\sharp \psi, \psi \right \rangle_{L^2(T \times G)}
    \end{align*}
    and hence
    \begin{align*}
      \sum_{\ell = 1}^N \sum_{i = 1}^{d^G_\lambda} \int_T |\psi_i^\ell(t)|^2 |\gamma_{i}^{\lambda, \ell}|^2 \dd V_T(t) &\leq C_1 \left( \alpha^{-1} \sum_{\ell = 1}^N \| \gr{a}_\ell (t, \vv{X}) \psi \|_{L^2(T \times G)}^2  + \sum_{\ell = 1}^N B_\ell \lambda \left \langle \Delta_T^\sharp \psi, \psi \right \rangle_{L^2(T \times G)} \right) \\
      &\leq C_2 (1 + \lambda) \left( \sum_{\ell = 1}^N \| \gr{a}_\ell (t, \vv{X}) \psi \|_{L^2(T \times G)}^2  + \left \langle \Delta_T^\sharp \psi, \psi \right \rangle_{L^2(T \times G)} \right) \\
    \end{align*}
    where $C_2 > 0$ is obtained by maximizing constants. Now notice that
    \begin{align*}
      \| \vv{W}_\ell^\sharp \psi \|_{L^2(T \times G)}^2 = \left \| \vv{W}_\ell^\sharp \left( \sum_{i = 1}^{d_\lambda^G} \psi_i \otimes \phi_i^{\lambda} \right) \right \|_{L^2(T \times G)}^2 =\left \| \sum_{i = 1}^{d_\lambda^G} (\vv{W}_\ell \psi_i) \otimes \phi_i^{\lambda} \right \|_{L^2(T \times G)}^2 = \sum_{i = 1}^{d_\lambda^G} \| \vv{W}_\ell \psi_i \|_{L^2(T)}^2
    \end{align*}
    where we decomposed $\psi$ w.r.t.~some orthonormal basis of $E_\lambda^G$ (its dependence on $\ell$ does not matter any longer); by Lemma~\ref{lem:l2estimvfd} there exists a constant $C_3 > 0$ such that
    \begin{align*}
      \| \vv{W}_\ell \psi_i \|_{L^2(T)} \leq C_3 \| \dd_T \psi_i \|_{L^2(T)}
    \end{align*}
    for all the indices involved, hence
    \begin{align*}
      \| \vv{W}_\ell^\sharp \psi \|_{L^2(T \times G)}^2 \leq C_3^2 \sum_{i = 1}^{d_\lambda^G} \| \dd_T \psi_i \|^2_{L^2(T)} = C_3^2 \left \langle \Delta_T^\sharp \psi, \psi \right \rangle_{L^2(T \times G)}
    \end{align*}
    so in particular     
    \begin{align*}
      \| \gr{a}_\ell (t, \vv{X}) \psi \|_{L^2(T \times G)}^2  &\leq \left(\|(\gr{a}_\ell(t, \vv{X})+ \vv{W}_\ell^{\sharp}) \psi \|_{L^2(T \times G)} + \| \vv{W}_\ell^\sharp \psi \|_{L^2(T \times G)} \right)^2 \\
      &\leq 2 \left(\| (\gr{a}_\ell(t, \vv{X})+ \vv{W}_\ell^{\sharp})\psi \|_{L^2(T \times G)}^2 + \| \vv{W}_\ell^\sharp \psi \|_{L^2(T \times G)}^2 \right) \\
      &\leq 2 \left(\| (\gr{a}_\ell(t, \vv{X})+ \vv{W}_\ell^{\sharp}) \psi \|_{L^2(T \times G)}^2 + C_3^2 \left \langle \Delta_T^\sharp \psi, \psi \right \rangle_{L^2(T \times G)} \right)
    \end{align*}
    from which we conclude that
     \begin{align*}
       \sum_{\ell = 1}^N \sum_{i = 1}^{d^G_\lambda} \int_T |\psi_i^\ell(t)|^2 |\gamma_{i}^{\lambda, \ell}|^2 \dd V_T(t) &\leq C_2 (1 + \lambda) \left( \sum_{\ell = 1}^N \| \gr{a}_\ell (t, \vv{X}) \psi \|_{L^2(T \times G)}^2  + \left \langle \Delta_T^\sharp \psi, \psi \right \rangle_{L^2(T \times G)} \right) \nonumber\\
       &\leq C_4 (1 + \lambda) \left( \sum_{\ell = 1}^N \| (\gr{a}_\ell(t, \vv{X})+ \vv{W}_\ell^{\sharp}) \psi \|_{L^2(T \times G)}^2  + \left \langle \Delta_T^\sharp \psi, \psi \right \rangle_{L^2(T \times G)} \right) \nonumber\\
       &= C_4 (1 + \lambda) \left \langle P \psi, \psi \right \rangle_{L^2(T \times G)}
    \end{align*}
     where the last equality follows from Lemma~\ref{lem:energy_identity}. Plugging this back into~\eqref{eq:psinorm} finishes our proof.

  \end{proof}
\end{Prop}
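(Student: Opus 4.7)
The plan is to exploit the commutativity hypothesis to simultaneously diagonalize each $\gr{a}_\ell(t,\vv{X})$ acting on $\cinfty(T;E_\lambda^G)$ in a basis independent of $t$, reduce the desired lower bound to a Poincaré-type integration over $T$, and then close the estimate by combining the global hypoellipticity of $\mathcal{L}$ in $G$ with the energy identity of Lemma~\ref{lem:energy_identity}.

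Concretely, fix $\lambda \in \sigma(\Delta_G)$ and $\ell \in \{1,\ldots,N\}$. Since $\vv{L}_1^\ell,\ldots,\vv{L}_{m^\ell}^\ell$ are left-invariant for an $\ad$-invariant metric, they act skew-symmetrically on $E_\lambda^G$; by hypothesis~\eqref{it:thm15_hyp1} they also commute, hence there is a common $L^2(G)$-orthonormal eigenbasis $\phi_1^{\lambda,\ell},\ldots,\phi_{d_\lambda^G}^{\lambda,\ell}$ with purely imaginary eigenvalues $\sqrt{-1}\gamma_{i,p}^{\lambda,\ell}$. Writing $\psi = \sum_i \psi_i^\ell \otimes \phi_i^{\lambda,\ell}$ and using the expression $\gr{a}_\ell(t)=\sum_p \alpha_{\ell p}(t)\vv{L}_p^\ell$, a direct computation gives the pointwise identity $\|\gr{a}_\ell(t)\psi(t)\|_{L^2(G)}^2 = \sum_i |\psi_i^\ell(t)|^2 D_\ell(t,\gamma_i^{\lambda,\ell})$, together with $\|\vv{L}_p^\ell\psi(t)\|_{L^2(G)}^2 = \sum_i |\psi_i^\ell(t)|^2 |\gamma_{i,p}^{\lambda,\ell}|^2$.

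Next, I would invoke Lemma~\ref{lem:Lspanlie} plus Proposition~\ref{prop:ghfinitevfs} applied to the generating set $\{\vv{L}_p^\ell\}$ of $\Span_\R\mathcal{L}$: hypothesis~\eqref{it:thm15_hyp2} then yields $C,\rho>0$ and $\lambda_0$ such that $\sum_{\ell,p}\|\vv{L}_p^\ell\phi\|_{L^2(G)}^2 \geq C(1+\lambda)^{-2\rho}\|\phi\|_{L^2(G)}^2$ for all $\phi\in E_\lambda^G$, $\lambda\geq\lambda_0$. Applying this pointwise to $\phi=\psi(t)$ and integrating in $t$ estimates $\|\psi\|_{L^2(T\times G)}^2$ from above by $C^{-1}(1+\lambda)^{2\rho}\sum_\ell\sum_i \int_T |\psi_i^\ell(t)|^2|\gamma_i^{\lambda,\ell}|^2\,\dd V_T(t)$. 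The problem is then reduced to bounding each $\int_T |\psi_i^\ell|^2|\gamma_i^{\lambda,\ell}|^2\,\dd V_T$ by the quadratic form $\langle P\psi,\psi\rangle$, up to an acceptable polynomial factor in $\lambda$.

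For the reduction, Lemma~\ref{lem:lem_functions} supplies, for each $(\ell,i)$, an open set $A_i^{\lambda,\ell}\subset T$ of volume $\geq \delta$ on which $D_\ell(t,\gamma_i^{\lambda,\ell})\geq\alpha|\gamma_i^{\lambda,\ell}|^2$. Applying Proposition~\ref{prop:calc_ineq} to the scalar function $|\gamma_i^{\lambda,\ell}|\psi_i^\ell$ converts this into $\int_T |\psi_i^\ell|^2|\gamma_i^{\lambda,\ell}|^2\,\dd V_T \lesssim \alpha^{-1}\int_T |\psi_i^\ell|^2 D_\ell(t,\gamma_i^{\lambda,\ell})\,\dd V_T + |\gamma_i^{\lambda,\ell}|^2\|\dd_T\psi_i^\ell\|_{L^2(T)}^2$. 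The eigenvalue bound $|\gamma_i^{\lambda,\ell}|^2\leq(\sum_p\|\vv{L}_p^\ell\|_{\gr{g}}^2)\lambda$ from Lemma~\ref{lem:bounding_eigenv_livf}, together with summation over $i$, packages the right-hand side as $\alpha^{-1}\|\gr{a}_\ell(t,\vv{X})\psi\|_{L^2(T\times G)}^2 + B_\ell\lambda\langle\Delta_T^\sharp\psi,\psi\rangle_{L^2(T\times G)}$.

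To finish, I would absorb the vector fields $\vv{W}_\ell$. The triangle inequality gives $\|\gr{a}_\ell(t,\vv{X})\psi\|^2 \leq 2\|\vv{Y}_\ell\psi\|^2 + 2\|\vv{W}_\ell^\sharp\psi\|^2$; Lemma~\ref{lem:l2estimvfd} bounds the second term by $C\langle\Delta_T^\sharp\psi,\psi\rangle$, and Lemma~\ref{lem:energy_identity} with $Q=\Delta_T$ identifies $\langle\Delta_T^\sharp\psi,\psi\rangle+\sum_\ell\|\vv{Y}_\ell\psi\|^2$ with $\langle P\psi,\psi\rangle$. Chaining the estimates yields
\[
\|\psi\|_{L^2(T\times G)}^2 \leq C'(1+\lambda)^{2\rho+1}\langle P\psi,\psi\rangle_{L^2(T\times G)}, \quad \lambda\geq\lambda_0,
\]
which is the claim with exponent $2\rho+1$. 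The main obstacle I anticipate is the bookkeeping: the diagonalizing basis and the sets $A_i^{\lambda,\ell}$ depend on $(\ell,i,\lambda)$, so the Poincaré step must be applied componentwise before any global quantity is reconstructed, and one must verify that the constants extracted from Lemma~\ref{lem:lem_functions} and Proposition~\ref{prop:calc_ineq} are uniform in $\gamma$ and $A$ respectively, so that the only $\lambda$-dependence comes from the explicit $(1+\lambda)^{-2\rho}$ supplied by Proposition~\ref{prop:ghfinitevfs} and the linear factor produced by $|\gamma_i^{\lambda,\ell}|^2$.
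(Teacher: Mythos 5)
Your proposal is correct and follows essentially the same route as the paper's proof: simultaneous diagonalization of each $\mathcal{L}_\ell$ on $E_\lambda^G$ via hypothesis~\eqref{it:thm15_hyp1}, the pointwise identities for $\|\gr{a}_\ell(t)\psi(t)\|_{L^2(G)}^2$ and $\|\vv{L}_p^\ell\psi(t)\|_{L^2(G)}^2$, the inequality from Proposition~\ref{prop:ghfinitevfs} combined with Lemma~\ref{lem:Lspanlie}, the Poincar\'e-type step from Lemma~\ref{lem:lem_functions} and Proposition~\ref{prop:calc_ineq} applied componentwise, and the absorption of the $\vv{W}_\ell^\sharp$ terms via Lemma~\ref{lem:l2estimvfd} and the energy identity of Lemma~\ref{lem:energy_identity}. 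The resulting exponent $2\rho+1$ and the uniformity concerns you flag match the paper's argument exactly.
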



\begin{proof}[Proof of Theorem~\ref{thm:thm15}] Let $u \in \D'(T \times G)$ be such that $f \dfn Pu \in \cinfty(T \times G)$. Since $\tilde{P}$ is elliptic by Corollary~\ref{cor:ulambdasmooth} we have that $\mathcal{F}^G_\lambda (u) \in \cinfty(T; E_\lambda^G)$ for every $\lambda \in \sigma(\Delta_G)$, and by Proposition~\ref{prop:final_inequality} -- applied to $\psi = \mathcal{F}^G_\lambda (u)$ -- there exist $C, \rho > 0$ and $\lambda_0 \in \sigma(\Delta_G)$ such that
  \begin{align*}
    \| \mathcal{F}^G_\lambda (u) \|_{L^2(T \times G)} &\leq C^{-1} (1 + \lambda)^{\rho} \| \mathcal{F}^G_\lambda (f) \|_{L^2(T \times G)} , \quad \forall \lambda \geq \lambda_0
  \end{align*}
  after a suitable application of Schwarz inequality. But since $f$ is smooth by Corollary~\ref{cor:partial_smoothness} for every $s > 0$ there exists $C_s > 0$ such that
  \begin{align*}
    \| \mathcal{F}^G_\lambda(f) \|_{L^2(T \times G)} &\leq C_s (1 + \lambda)^{-s}, \quad \forall \lambda \in \sigma(\Delta_G)
  \end{align*}
  from which we conclude that for every $s > 0$ there exists $C'_s > 0$ such that
  \begin{align*}
    \| \mathcal{F}^G_\lambda(u) \|_{L^2(T \times G)} &\leq C'_s (1 + \lambda)^{-s}, \quad \forall \lambda \geq \lambda_0.
  \end{align*}
  It is simple to see that, increasing $C'_s$ if necessary, we obtain that the last inequality holds for every $\lambda \in \sigma(\Delta_G)$. We already saw in Corollary~\ref{cor:est_small_cone} that the ellipticity of $\tilde{P}$ entails, for every $s > 0$, the existence of $C > 0$ and $\theta \in (0, 1)$ such that~\eqref{eq:conethetaestimate} holds. Finally, Corollary~\ref{cor:partial_smoothness_converse} ensures smoothness of $u$.

  Furthermore, if $R$ is as in~\eqref{eq:Rdef} then it is certainly a positive semidefinite LPDO in $T \times G$, hence~\eqref{eq:InequalitypsiPpsi} implies that the same inequality holds if we exchange $P$ for $P_0 = P + R$. The latter is also a LPDO on $T \times G$ of the same kind as $P$, and $\tilde{P}_0$ is clearly elliptic too. Thus the argument above applies just as well for $P_0$ in place of $P$, proving its global hypoellipticity in $T \times G$.
\end{proof}

\section{A class of systems}\label{sec:classofsystem}

Our goal in this section is to prove Theorem~\ref{thm:PGHnecessaell}. Notice that its proof would be rather simple -- similar to that of Proposition~\ref{prop:first_nec_condition} -- if there were no vector fields $\vv{W}_\ell$ in~\eqref{eq:Pdef}. Here, however, we are once again studying a general $P$ defined by~\eqref{eq:Pdef} in $T\times G$ and $\mathcal{L}$ denotes the system of vector fields~\eqref{eq:sys_fromaell}. Our next lemma is the key to relate the condition~\eqref{eq:reg_aa} with the global hypoellipticity of $\mathcal{L}$ in $G$.

\begin{Lem} \label{lem:reg_aa} A distribution $u \in \D'(G)$ satisfies $\gr{a}_\ell(t, \vv{X})(1_T \otimes u) \in \cinfty(T \times G)$ for every $\ell \in \{1, \ldots, N\}$ if and only if $\vv{L} u \in \cinfty(G)$ for every $\vv{L} \in \mathcal{L}$.
  \begin{proof} Let $u \in \D'(G)$ be such that $\gr{a}_\ell(t,\vv{X})(1_T \otimes u) \in \cinfty(T \times G)$ for every $\ell \in \{1, \ldots, N\}$. We have
    \begin{align*}
      \gr{a}_\ell(t,\vv{X})(1_T \otimes u)  &= \sum_{j = 1}^m a_{\ell j}(t) \vv{X}_ju, \quad t \in T,
    \end{align*}
    which is smooth in $T \times G$, hence for any given $t_0 \in T$
    \begin{align*}
      \gr{a}_\ell(t_0) u  &= \sum_{j = 1}^m a_{\ell j}(t_0) \vv{X}_ju \in \cinfty(G), \quad \forall \ell \in \{1, \ldots, N\}.
    \end{align*}
    We conclude that $\vv{L} u \in \cinfty(G)$ for every $\vv{L} \in \mathcal{L}$ since $\mathcal{L} = \left\{ \gr{a}_\ell(t_0) \st t_0 \in T, \ \ell \in \{1, \ldots, N\} \right\}$ .

    For the converse, suppose that $u \in \D'(G)$ is such that $\vv{L} u \in \cinfty(G)$ for every $\vv{L} \in \mathcal{L}$. We select $\vv{L}_1, \ldots, \vv{L}_r \in \mathcal{L}$ a basis for $\Span_\R \mathcal{L}$ -- this is a finite dimensional space since it is contained in $\gr{g}$ -- so we can write, for each $\ell \in \{1, \ldots, N\}$,
    \begin{align*}
      \gr{a}_\ell(t) &= \sum_{j = 1}^r \alpha_{\ell j}(t) \vv{L}_j, \quad t \in T,
    \end{align*}
    where $\alpha_{\ell 1}, \ldots, \alpha_{\ell r} \in \cinfty(T; \R)$ are uniquely determined. Indeed, given $\MM_1, \ldots, \MM_{r'} \in \gr{g}$ such that $\vv{L}_1, \ldots, \vv{L}_r, \MM_1, \ldots, \MM_{r'}$ is a basis for $\gr{g}$, and letting $\tau_1, \ldots, \tau_r, \zeta_1, \ldots, \zeta_{r'} \in \gr{g}^*$ be the corresponding dual basis, we have that $\alpha_{\ell j} = \tau_j \circ \gr{a}_\ell$, which is smooth since $\tau_j$ is linear. We thus have
    \begin{align*}
      \gr{a}_\ell(t, \vv{X}) (1_T \otimes u) = \gr{a}_\ell(t) u = \sum_{j = 1}^r \alpha_{\ell j}(t) \vv{L}_j u \in \cinfty(T \times G), \quad \forall \ell \in \{1, \ldots, N\},
    \end{align*}
    since $\vv{L}_1 u, \ldots, \vv{L}_r u \in \cinfty(G)$ by hypothesis.
  \end{proof}
\end{Lem}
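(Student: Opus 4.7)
The lemma asserts the equivalence of two regularity conditions on $u \in \D'(G)$: (A) that $\gr{a}_\ell(t,\vv{X})(1_T\otimes u) \in \cinfty(T\times G)$ for every $\ell$, and (B) that $\vv{L} u \in \cinfty(G)$ for every $\vv{L} \in \mathcal{L}$. Since by its very definition $\mathcal{L}=\bigcup_\ell \ran\gr{a}_\ell$ is precisely the set of vector fields of the form $\gr{a}_\ell(t_0)$ for some $\ell$ and some $t_0\in T$, the equivalence is really a matter of ``unfolding'' the dependence of $\gr{a}_\ell(t,\vv{X})(1_T\otimes u)$ on the $t$ variable, back and forth.

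For the direction (A) $\Rightarrow$ (B) the plan is to use the coordinate expression~\eqref{eq:aellcoordinates} to write
$$\gr{a}_\ell(t,\vv{X})(1_T\otimes u)=\sum_{j=1}^m a_{\ell j}(t)\,\vv{X}_j u$$
as an element of $\D'(T\times G)$. By hypothesis this distribution is actually a smooth function on $T\times G$, so its restriction to the slice $\{t_0\}\times G$ is smooth on $G$ for every $t_0\in T$. But that restriction is precisely $\sum_j a_{\ell j}(t_0)\,\vv{X}_j u=\gr{a}_\ell(t_0)u$. Running over all $\ell\in\{1,\ldots,N\}$ and all $t_0\in T$ exhausts $\mathcal{L}$, which yields (B).

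For the converse (B) $\Rightarrow$ (A) the key observation is that $\Span_\R\mathcal{L}\subset\gr{g}$ is finite-dimensional, so I would extract from $\mathcal{L}$ itself a basis $\vv{L}_1,\ldots,\vv{L}_r\in\mathcal{L}$ of $\Span_\R\mathcal{L}$ and write, for each $\ell$,
$$\gr{a}_\ell(t)=\sum_{j=1}^r\alpha_{\ell j}(t)\vv{L}_j,\quad t\in T,$$
for uniquely determined real coefficient functions $\alpha_{\ell j}\colon T\to\R$. Smoothness of these coefficients is the only slightly delicate point, and I would settle it by extending $\vv{L}_1,\ldots,\vv{L}_r$ to a full basis of $\gr{g}$ and taking the corresponding dual basis $\tau_1,\ldots,\tau_r,\ldots\in\gr{g}^*$, so that $\alpha_{\ell j}=\tau_j\circ\gr{a}_\ell$ is the composition of a smooth map with a linear functional, hence smooth. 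Substituting back I get
$$\gr{a}_\ell(t,\vv{X})(1_T\otimes u)=\sum_{j=1}^r\alpha_{\ell j}(t)\,\vv{L}_j u,$$
a finite sum of tensor products of smooth functions on $T$ with smooth functions on $G$ (each $\vv{L}_j u\in\cinfty(G)$ by hypothesis (B), since each $\vv{L}_j\in\mathcal{L}$), hence smooth on $T\times G$.

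Neither direction really has a hard step; the only subtlety is the smoothness of the $\alpha_{\ell j}$, and that is handled cleanly once they are recognized as pullbacks of $\gr{a}_\ell$ by linear functionals on $\gr{g}$. The overall lemma should be viewed as a translation device that re-expresses a ``one-parameter family'' smoothness condition on $T\times G$ in terms of smoothness on $G$ under each member of a concrete (finite) system of left-invariant vector fields, which is what makes the subsequent Proposition~\ref{Pro:66impliesLGH} effective.
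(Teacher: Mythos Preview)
Your proposal is correct and follows essentially the same argument as the paper's proof, including the same handling of both implications and the same dual-basis trick to establish smoothness of the coefficients $\alpha_{\ell j}=\tau_j\circ\gr{a}_\ell$.
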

 
\begin{Prop}\label{Pro:66impliesLGH} Condition~\eqref{eq:reg_aa} holds if and only if $\mathcal{L}$ is~$\mathrm{(GH)}$ in $G$.
  \begin{proof} Assume first that $\mathcal{L}$ is~$\mathrm{(GH)}$ in $G$ and let $u \in \D'(G)$ be such that $\gr{a}_\ell(t,\vv{X})(1_T \otimes u) \in \cinfty(T \times G)$ for every $\ell \in \{1, \ldots, N\}$. By Lemma~\ref{lem:reg_aa} we have that $\vv{L} u \in \cinfty(G)$ for every $\vv{L} \in \mathcal{L}$, hence $u \in \cinfty(G)$.

    On the other hand, if one assumes~\eqref{eq:reg_aa} and letting $u \in \D'(G)$ be such that $\vv{L} u \in \cinfty(G)$ for every $\vv{L} \in \mathcal{L}$ then by Lemma~\ref{lem:reg_aa} we have that $\gr{a}_\ell(t,\vv{X})(1_T \otimes u) \in \cinfty(T \times G)$ for every $\ell \in \{1, \ldots, N\}$. We conclude that $u \in \cinfty(G)$.
  \end{proof}
\end{Prop}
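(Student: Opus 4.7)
The plan is that essentially all of the work is already carried out by Lemma~\ref{lem:reg_aa}. That lemma establishes, for every distribution $u \in \D'(G)$, the equivalence
\begin{align*}
  \gr{a}_\ell(t, \vv{X})(1_T \otimes u) \in \cinfty(T \times G) \ \forall \ell \in \{1, \ldots, N\} &\Longleftrightarrow \vv{L} u \in \cinfty(G) \ \forall \vv{L} \in \mathcal{L}.
\end{align*}
Once this is in hand, condition~\eqref{eq:reg_aa} and the global hypoellipticity of $\mathcal{L}$ (in the sense of Definition~\ref{def:gh_systems}) are two transparent rephrasings of the same implication on $\D'(G)$, so the proposition is purely a bookkeeping exercise.

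Concretely, for the forward direction I would assume $\mathcal{L}$ is~$\mathrm{(GH)}$ in $G$ and pick $u \in \D'(G)$ satisfying the premise of~\eqref{eq:reg_aa}. The ``only if'' half of Lemma~\ref{lem:reg_aa} yields $\vv{L}u \in \cinfty(G)$ for every $\vv{L} \in \mathcal{L}$, and then the (GH) hypothesis forces $u \in \cinfty(G)$. For the converse, assume~\eqref{eq:reg_aa} and take $u \in \D'(G)$ with $\vv{L}u \in \cinfty(G)$ for every $\vv{L} \in \mathcal{L}$; the ``if'' half of Lemma~\ref{lem:reg_aa} delivers the premise of~\eqref{eq:reg_aa}, and we conclude $u \in \cinfty(G)$.

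There is no genuine obstacle here: the whole difficulty was absorbed by Lemma~\ref{lem:reg_aa}, whose nontrivial direction relies on the observation that $\Span_\R \mathcal{L}$ is finite dimensional (being contained in $\gr{g}$) so that each $\gr{a}_\ell$ admits a smooth decomposition $\gr{a}_\ell(t) = \sum_j \alpha_{\ell j}(t) \vv{L}_j$ against a fixed basis of $\Span_\R \mathcal{L} \sset \mathcal{L}$. One should only take care that Definition~\ref{def:gh_systems} is stated for arbitrary families of vector fields, so applying it directly to the set $\mathcal{L} = \bigcup_\ell \ran \gr{a}_\ell$ is unproblematic and no appeal to Lemma~\ref{lem:Lspanlie} is needed.
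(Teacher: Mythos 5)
Your proposal is correct and follows essentially the same route as the paper: both directions are obtained by applying the two halves of Lemma~\ref{lem:reg_aa} and then invoking, respectively, the hypothesis that $\mathcal{L}$ is~$\mathrm{(GH)}$ and condition~\eqref{eq:reg_aa}, with no appeal to Lemma~\ref{lem:Lspanlie} — exactly as in the paper. (Your closing aside contains a harmless slip: the point in Lemma~\ref{lem:reg_aa} is that one chooses a basis of $\Span_\R \mathcal{L}$ consisting of elements of $\mathcal{L}$, not that $\Span_\R \mathcal{L} \sset \mathcal{L}$; this does not affect the argument.)
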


\begin{proof}[Proof of Theorem~\ref{thm:PGHnecessaell}] Suppose that $P$ is~$\mathrm{(GH)}$ in $T \times G$ and let $u \in \D'(G)$ be such that $\gr{a}_\ell(t,\vv{X})(1_T \otimes u) \in \cinfty(T \times G)$ for every $\ell \in \{1, \ldots, N\}$. By~\eqref{eq:Pontensors} we have (recall that $\tilde{P}$ has no zeroth order terms, hence annihilates constants):
  \begin{align*}
    P(1_T \otimes u) &= - \sum_{\ell = 1}^N \gr{a}_\ell(t, \vv{X})^2 (1_T \otimes u) - \sum_{\ell = 1}^N \sum_{j = 1}^m (\vv{W}_\ell a_{\ell j}) \otimes (\vv{X}_j u).
  \end{align*}
  The first sum is smooth on $T \times G$ by assumption; we claim that so is the second. Indeed, define 
  \begin{align*}
    \tilde{\gr{a}}_\ell(t) &\dfn \sum_{j = 1}^m (\vv{W}_\ell a_{\ell j})(t) \vv{X}_j, \quad t \in T, \ \ell \in \{1, \ldots, N\}.
  \end{align*}
  Hence $\tilde{\gr{a}}_1, \ldots, \tilde{\gr{a}}_N: T \rarr \gr{g}$ are all smooth. We notice that $\ran \tilde{\gr{a}}_\ell \sset \Span_\R \ran \gr{a}_\ell$ for every $\ell \in \{1, \ldots, N\}$: given $t_0 \in T$ and $(U; \chi)= (U; t_1, \ldots, t_n)$ a coordinate chart of $T$ centered at $t_0$ we may write, in $U$,
  \begin{align*}
    \vv{W}_\ell &= \sum_{k = 1}^n b_{\ell k}(t) \frac{\del}{\del t_k}
  \end{align*}
  where $b_{\ell 1}, \ldots, b_{\ell n} \in \cinfty(U; \R)$, hence
  \begin{align*}
    \tilde{\gr{a}}_\ell(t_0) &= \sum_{j = 1}^m (\vv{W}_\ell a_{\ell j})(t_0) \vv{X}_j \\
    &= \sum_{j = 1}^m \sum_{k = 1}^n b_{\ell k}(t_0) \frac{\del a_{\ell j}}{\del t_k}(t_0) \vv{X}_j \\
    &= \sum_{k = 1}^n b_{\ell k}(t_0) \lim_{h \to 0} \sum_{j = 1}^m \frac{1}{h} \left( a_{\ell j} (\chi^{-1}( h e_k) - a_{\ell j} (\chi^{-1}(0)) \right) \vv{X}_j \\
    &= \sum_{k = 1}^n b_{\ell k}(t_0) \lim_{h \to 0} \frac{1}{h} \left( \gr{a}_\ell(\chi^{-1}(h e_k)) - \gr{a}_{\ell} (\chi^{-1}(0)) \right)
  \end{align*}
  certainly belongs to the vector space $\Span_\R \ran \gr{a}_\ell$ -- since all the Newton quotients above obviously do.

  We then define
  \begin{align*}
    \tilde{\mathcal{L}} &\dfn \bigcup_{\ell = 1}^N \ran \tilde{\gr{a}}_\ell
  \end{align*}
  which we have just proved to be contained in $\Span_\R \mathcal{L}$. Now since $\gr{a}_\ell(t,\vv{X})(1_T \otimes u) \in \cinfty(T \times G)$ for every $\ell \in \{1, \ldots, N\}$ it follows from Lemma~\ref{lem:reg_aa}  that $\vv{L} u \in \cinfty(G)$ for every $\vv{L} \in \mathcal{L}$, hence also for every $\vv{L} \in \Span_\R \mathcal{L}$ and, in particular, for every $\vv{L} \in \tilde{\mathcal{L}}$; by a second application of Lemma~\ref{lem:reg_aa}  we conclude that $\tilde{\gr{a}}_\ell(t,\vv{X})(1_T \otimes u) \in \cinfty(T \times G)$ for every $\ell \in \{1, \ldots, N\}$. It then follows that
  \begin{align*}
    P(1_T \otimes u) &= - \sum_{\ell = 1}^N \gr{a}_\ell(t, \vv{X})^2 (1_T \otimes u) - \sum_{\ell = 1}^N \tilde{\gr{a}}_\ell(t, \vv{X}) (1_T \otimes u) \in \cinfty(T \times G)
  \end{align*}
  and since $P$ is~$\mathrm{(GH)}$ in $T \times G$ we conclude that $1_T \otimes u \in \cinfty(T \times G)$ i.e~$u \in \cinfty(G)$, thus proving~\eqref{eq:reg_aa}. 
\end{proof}



\section{Remarks and examples}

We devote this section to motivate our hypotheses, to compare our results with previous ones in the literature and of course to provide some examples of operators that satisfy the hypotheses of Theorem~\ref{thm:thm15}. 

We start by analyzing hypothesis~\eqref{it:thm15_hyp1} in Theorem~\ref{thm:thm15}. The fact that $\gr{a}_{\ell}(t_1)$ and $\gr{a}_{\ell}(t_2)$ commute for every $t_1, t_2 \in T$ does not preclude non-commutativity of the vector fields belonging to distinct $\mathcal{L}_\ell$. In concrete examples, this is what prevents us from being ``thrown back'' to tori: more stringent hypotheses could inadvertently imply that $\gr{g}$ were already commutative to start with, see e.g.~Corollary~\ref{cor:GnonAbeliannoGH}. This leads us to our first example.

\begin{Exa} \label{exa:lines} For instance, choose $\vv{X}_1, \ldots, \vv{X}_N \in \gr{g}$  such that the Lie subalgebra generated by them is $\gr{g}$. Define  $\gr{a}_\ell(t) \dfn a_\ell(t)\vv{X}_\ell^\sharp$, for every $\ell \in \{1, \ldots, N\}$,  where each $a_\ell \in \cinfty(T; \R)$ is a nonzero function. Then condition~\eqref{it:thm15_hyp2} in Theorem~\ref{thm:thm15} is clearly satisfied, but also condition~\eqref{it:thm15_hyp1}: since $\lie \mathcal{L} = \gr{g}$ is obviously~$\mathrm{(GH)}$ in $G$ -- as it forms an elliptic system there --, so does $\mathcal{L}$ as a consequence of Lemma~\ref{lem:Lspanlie}. Now consider
\begin{align*}
  P &\dfn \Delta_T^\sharp - \sum_{\ell=1}^N \left( a_\ell(t)\vv{X}_\ell^\sharp + \vv{W}_\ell^\sharp \right)^2
\end{align*}
where $\vv{W}_1, \ldots, \vv{W}_N$ are skew-symmetric vector fields in $T$. Then, thanks to Theorem~\ref{thm:thm15}, $P$ is~$\mathrm{(GH)}$ in $T \times G$. Notice that this generalizes~\cite[Theorem~3]{albanese11}.
\end{Exa}

Note that if $G = \TT^m$ then $\lie \mathcal{L} = \gr{g}$ is possible if and only if $\mathcal{L}$ already contains $m$ linearly independent vector fields. For a compact connected but non-Abelian Lie group $G$ the  non-commutativity of $\gr{g}$ \emph{helps} us to reach condition~\eqref{it:thm15_hyp2} as $N$, the number of linearly independent vector fields in $\mathcal{L}$ in Example~\ref{exa:lines}, could be much smaller than $m = \dim \gr{g}$. For instance, in $G \dfn \mathrm{SU}(2)$ it is possible to find $\vv{X}_1, \vv{X}_2, \vv{X}_3$ three real vector fields forming a linear basis of $\gr{g}= \gr{su}(2)$ and such that $[\vv{X}_1, \vv{X}_2] = \vv{X}_3$. Then it is enough to choose non-vanishing $a_1, a_2 \in \cinfty(T; \R)$ and skew-symmetric vector fields $\vv{W}_1, \vv{W}_2$ in $T$ to conclude that
\begin{align*}
  P &\dfn \Delta_T^\sharp - \left( a_1(t) \vv{X}_1^\sharp + \vv{W}_1^\sharp \right)^{2} - \left( a_2(t) \vv{X}_2^\sharp + \vv{W}_2^\sharp \right)^{2}
\end{align*}
is globally hypoelliptic in $T \times G$. 

\subsection{Relationship with the notion of simultaneous approximability for vectors} \label{sec:nsa_vectors}

Before we  provide more examples, we compare Theorem~\ref{Thm:Toruscase} with~\cite[Theorem~1.5]{bfp17} where global hypoellipticity of the same model operator was studied. Even though both results established necessary and sufficient conditions for global hypoellipticity when $G$ is a torus, it may seem, at a first glance, that our necessary condition of $\mathcal{L}$ being~$\mathrm{(GH)}$ in $G$ has nothing to do with the notion of simultaneous approximability of a collection of vectors~\cite[Definition~1.2]{bfp17}\footnote{Properly adapted to the smooth setup (see condition~\eqref{it:condI} in Proposition~\ref{Prop:equivalenceNSA}): in that work the authors are interested in hypoellipticity w.r.t.~some classes of ultradifferentiable functions.}. Note that one does not need to assume that $T$ is a torus in order to state the notion of simultaneous approximability.

Yet, now we study the relationship between these two concepts. Still within the general setup, recall that $\gr{g}$ carries an inner product $\langle \cdot, \cdot \rangle$ and select $\vv{X}_1, \ldots, \vv{X}_m \in \gr{g}$ a linear basis. Let $\gr{a}_{1}, \ldots, \gr{a}_N$ be as in~\eqref{eq:aellcoordinates} and for each $\ell \in \{1, \ldots, N\}$ define
\begin{align*}
  \mathcal{A}_\ell &\dfn \Span_\R \{ a_{\ell 1}, \ldots, a_{\ell m} \} \sset C(T; \R).
\end{align*}
Notice that the linear map
\begin{align*}
  \begin{array}{c c c}
    \gr{g} & \longrightarrow & \mathcal{A}_\ell \\
    \vv{X} & \longmapsto & \sum_{j = 1}^m \langle \vv{X}, \vv{X}_j \rangle a_{\ell j}
  \end{array}
\end{align*}
is certainly onto, with kernel precisely $\mathcal{L}_\ell^\bot$: if $\vv{X} \in \gr{g}$ is such that
\begin{align*}
  \sum_{j = 1}^m \langle \vv{X}, \vv{X}_j \rangle a_{\ell j}(t) &= 0, \quad \forall t \in T,
\end{align*}
then
\begin{align*}
  0 = \left \langle \vv{X}, \sum_{j = 1}^m a_{\ell j}(t) \vv{X}_j \right \rangle = \langle \vv{X}, \gr{a}_\ell(t) \rangle, \quad \forall t \in T,
\end{align*}
that is, $\vv{X}$ is orthogonal to every element in $\ran \gr{a}_\ell$, and these generate $\mathcal{L}_\ell$. We thus have an isomorphism $\mathcal{L}_\ell \cong \mathcal{A}_\ell$. Their dimension will be denoted by $m^\ell$, and therefore there are indices $1 \leq j_1^\ell < \cdots < j_{m^\ell}^\ell \leq m$ such that
\begin{align*}
  \text{$a_{\ell j_1^\ell}, \ldots, a_{\ell j_{m^\ell}^\ell}$ form a basis of $\mathcal{A}_\ell$}.
\end{align*}
If we write the remaining indices as $1 \leq i_1^\ell < \cdots < i_{d^\ell}^\ell \leq m$ (where $d^\ell \dfn m - m^\ell$) then we can write
\begin{align*}
  a_{\ell i_q^\ell} &= \sum_{p = 1}^{m^\ell} \lambda_{qp}^\ell a_{\ell j_p^\ell}, \quad q \in \{1, \ldots, d^\ell \}
\end{align*}
where the constants $\lambda_{qp}^\ell \in \R$ are uniquely determined. Thus a $\vv{X} \in \gr{g}$ belongs to $\mathcal{L}_\ell^\bot$ if and only if
\begin{align*}
  0 = \sum_{j = 1}^m \langle \vv{X}, \vv{X}_j \rangle a_{\ell j} = \sum_{p = 1}^{m^\ell} \langle \vv{X}, \vv{X}_{j_{p}^\ell} \rangle a_{\ell j_{p}^\ell} + \sum_{q = 1}^{d^\ell} \langle \vv{X}, \vv{X}_{i_{q}^\ell} \rangle a_{\ell i_{q}^\ell} = \sum_{p = 1}^{m^\ell} \left( \langle \vv{X}, \vv{X}_{j_{p}^\ell} \rangle + \sum_{q = 1}^{d^\ell} \lambda_{qp}^\ell \langle \vv{X}, \vv{X}_{i_{q}^\ell} \rangle \right) a_{\ell j_{p}^\ell}
\end{align*}
i.e.
\begin{align*}
  \langle \vv{X}, \vv{X}_{j_{p}^\ell} \rangle + \sum_{q = 1}^{d^\ell} \lambda_{qp}^\ell \langle \vv{X}, \vv{X}_{i_{q}^\ell} \rangle &= 0, \quad \forall p \in \{1, \ldots, m^\ell\}, 
\end{align*}
meaning that $\vv{X}$ is orthogonal to
\begin{align*}
  \vv{L}_p^\ell &\dfn \vv{X}_{j_{p}^\ell} + \sum_{q = 1}^{d^\ell} \lambda_{qp}^\ell \vv{X}_{i_{q}^\ell}, \quad p \in \{1, \ldots, m^\ell\}. 
\end{align*}
That is, $\vv{L}_1^\ell, \ldots, \vv{L}_{m^\ell}^\ell$ form a basis for $\mathcal{L}_\ell$ (they are clearly linearly independent), so by Proposition~\ref{prop:ghfinitevfs} and Lemma~\ref{lem:Lspanlie} $\mathcal{L}$ is~$\mathrm{(GH)}$ in $G$ if and only if there exist $C, \rho > 0$ and $\lambda_0 \in \sigma(\Delta_G)$ such that
\begin{align}
  \left( \sum_{\ell = 1}^N \sum_{p = 1}^{m^\ell} \|\vv{L}_p^\ell \phi \|_{L^2(G)}^2 \right)^{\frac{1}{2}} &\geq C(1 + \lambda)^{-\rho}\| \phi \|_{L^2(G)}, \quad \forall \phi \in E^G_\lambda, \ \lambda \geq \lambda_0. \label{eq:abstract_diophantine}
\end{align}

Now let us see how these things work on a torus. When $G = \TT^m$ we have that $\vv{X}_j \dfn \del_{x_j}$, $j \in \{1, \ldots, m\}$, form a basis of its Lie algebra $\gr{g} \cong \R^m$ -- which is a commutative Lie algebra, so the standard inner product (i.e.~the one for which $\vv{X}_1, \ldots, \vv{X}_m$ is an orthonormal basis) is automatically $\ad$-invariant, and the associated Laplace-Beltrami operator thus reads
\begin{align*}
  \Delta_G = - \sum_{j = 1}^m \vv{X}_j^2 = - \sum_{j = 1}^m \del_{x_j}^2 .
\end{align*}
Thanks to Fourier Analysis we have that $\sigma(\Delta_G) = \{ n^2 \st n \in \Z_+ \}$ and 
\begin{align*}
  E_\lambda^G &= \Span_\C \{ e^{i x \xi} \st \xi \in \Z^m, \ |\xi|^2 = \lambda \}, \quad \forall \lambda \in \sigma(\Delta_G),
\end{align*}
the exponentials actually forming an orthonormal basis of $E_\lambda^G$, hence
\begin{align*}
  \|\vv{L}_p^\ell e^{i x \xi}\|_{L^2(\TT^m)} = \left\| \left( \del_{x_{j_{p}^\ell}} + \sum_{q = 1}^{d^\ell} \lambda_{qp}^\ell \del_{x_{i_{q}^\ell}} \right) e^{i x \xi} \right\|_{L^2(\TT^m)} = \left| \xi_{j_{p}^\ell} + \sum_{q = 1}^{d^\ell} \lambda_{qp}^\ell \xi_{i_{q}^\ell} \right|.
\end{align*}
It follows from~\eqref{eq:abstract_diophantine} that if $\mathcal{L}$ is ~$\mathrm{(GH)}$ in $G = \TT^m$ then there exist $C, \rho > 0$ and $n_0 \in \N$ such that
\begin{align}
  \left( \sum_{\ell = 1}^N \sum_{p = 1}^{m^\ell} \left| \xi_{j_{p}^\ell} + \sum_{q = 1}^{d^\ell} \lambda_{qp}^\ell \xi_{i_{q}^\ell} \right|^2 \right)^{\frac{1}{2}} &\geq C(1 + |\xi|^2)^{-\rho}, \quad \forall \xi \in \Z^m, \ |\xi| \geq n_0. \label{eq:cond_intermed}
\end{align}
Conversely, since every $\phi \in E_\lambda^G$ can be written as
\begin{align*}
  \phi &= \sum_{|\xi|^2 = \lambda} \phi_\xi e^{ix \xi}, \quad \phi_\xi \in \C,
\end{align*}
if~\eqref{eq:cond_intermed} holds then for $|\xi| \geq n_0$:
\begin{align*}
  \sum_{\ell = 1}^N \sum_{p = 1}^{m^\ell} \|\vv{L}_p^\ell \phi \|_{L^2(\TT^m)}^2 &= \sum_{\ell = 1}^N \sum_{p = 1}^{m^\ell} \left \| \sum_{|\xi|^2 = \lambda} \phi_\xi \vv{L}_p^\ell e^{ix \xi} \right\|_{L^2(\TT^m)}^2 \\
  &= \sum_{\ell = 1}^N \sum_{p = 1}^{m^\ell} \sum_{|\xi|^2 = \lambda} |\phi_\xi|^2  \left| \xi_{j_{p}^\ell} + \sum_{q = 1}^{d^\ell} \lambda_{qp}^\ell \xi_{i_{q}^\ell} \right|^2 \\
  &\geq \sum_{|\xi|^2 = \lambda} |\phi_\xi|^2 C^2 (1 + |\xi|^2)^{-2\rho} \\
  &= C^2 (1 + \lambda)^{-2\rho} \| \phi \|_{L^2(\TT^m)}^2
\end{align*}
so~\eqref{eq:abstract_diophantine} also holds, and  $\mathcal{L}$ is~$\mathrm{(GH)}$ in $\TT^m$.

Inequality~\eqref{eq:cond_intermed} not only resembles the smooth version of the non-simultaneous approximability condition introduced in~\cite[Definition~1.2]{bfp17} but it is actually equivalent to it. This is the content of the next proposition, for which we introduce further notation in order to simplify its statement. For each $\ell \in \{1, \ldots, N\}$, assume that $d^{\ell}>0$ and $m^{\ell}>0$, and denote, for $\xi \in \R^m$,
\begin{align*}
  \xi'_{(\ell)} \dfn \left( \xi_{j_1^\ell}, \ldots, \xi_{j_{m^\ell}^\ell} \right) \in \R^{m^\ell}, &\quad  \xi''_{(\ell)} \dfn \left( \xi_{i_1^\ell}, \ldots, \xi_{i_{d^\ell}^\ell} \right) \in \R^{d^\ell},
\end{align*}
and also
\begin{align*}
  v_p^\ell &\dfn \left( \lambda_{1p}^\ell, \ldots, \lambda_{{d^\ell} p}^\ell \right) \in \R^{d^\ell}, \quad p \in \{1, \ldots, m^\ell\}.
\end{align*}

\begin{Prop}\label{Prop:equivalenceNSA} The following are equivalent:
  \begin{enumerate}
  \item \label{it:condG} There exist $C, \rho > 0$ and $n_0 \in \N$ such that~\eqref{eq:cond_intermed} holds i.e.
    \begin{align*}
      \left( \sum_{\ell = 1}^N \sum_{p = 1}^{m^\ell} \left| \xi_{j_{p}^\ell} + v_p^\ell \cdot \xi''_{(\ell)} \right|^2 \right)^{\frac{1}{2}} &\geq C(1 + |\xi|^2)^{-\rho}, \quad \forall \xi \in \Z^m, \ |\xi| \geq n_0.
    \end{align*}
  \item \label{it:condI} There exist $B, M > 0$ such that for each $\xi \in \Z^m \setminus 0$ there exist $\ell \in \{1, \ldots, N\}$ and $p \in \{1, \ldots, m^\ell\}$ such that
    \begin{align*}
      \left| \xi_{j_{p}^\ell} + v_p^\ell \cdot \xi''_{(\ell)} \right| &\geq B \left( 1 + \left| \xi ''_{(\ell)} \right| \right)^{-M}.
    \end{align*}
  \end{enumerate}
\end{Prop}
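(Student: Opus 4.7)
The plan is to prove the two implications separately, since \eqref{it:condI}$\Rightarrow$\eqref{it:condG} is an immediate computation while the reverse requires some careful bookkeeping. For the easy direction, given $\xi \in \Z^m$ with $|\xi| \geq 1$, I would choose $\ell, p$ as furnished by \eqref{it:condI} and observe that $|\xi''_{(\ell)}| \leq |\xi|$ together with the elementary inequality $(1+|\xi|)^2 \leq 2(1+|\xi|^2)$ bound the single chosen summand of \eqref{it:condG} from below by a constant multiple of $(1+|\xi|^2)^{-M/2}$; since the full $\ell^2$ sum dominates that summand, \eqref{it:condG} follows with $\rho = M/2$ and $n_0 = 1$.

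For the converse, the strategy is to handle large and small $\xi$ separately, after first establishing a preliminary observation that will play a crucial role: under \eqref{it:condG}, the quantity $\epsilon(\xi) \dfn \max_{\ell, p} |\xi_{j_p^\ell} + v_p^\ell \cdot \xi''_{(\ell)}|$ is strictly positive for \emph{every} $\xi \in \Z^m \setminus 0$. This follows by linearity: if $\epsilon(\xi^*) = 0$ for some nonzero $\xi^* \in \Z^m$, then $\epsilon(k \xi^*) = 0$ for all $k \in \Z$, contradicting \eqref{it:condG} once $|k \xi^*| \geq n_0$. I expect this linearity trick to be the key insight that makes the converse implication work, since without it one would be stuck with the fact that \eqref{it:condG} provides no direct information on the range $|\xi| < n_0$.

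Then for $\xi$ with $|\xi| \geq n_0$, let $(\ell^*, p^*)$ achieve the maximum in $\epsilon(\xi)$; by \eqref{it:condG}, $\epsilon(\xi) \geq C K^{-1/2}(1+|\xi|^2)^{-\rho}$, where $K \dfn \sum_\ell m^\ell$. The bound in \eqref{it:condI} is trivial when $\epsilon(\xi) > 1$. When $\epsilon(\xi) \leq 1$, each $|\xi_{j_p^{\ell^*}}|$ is bounded by $1 + |v_p^{\ell^*}||\xi''_{(\ell^*)}|$, and summing the squares of these over $p$ while using that $\xi'_{(\ell^*)}$ and $\xi''_{(\ell^*)}$ together exhaust the coordinates of $\xi$ yields an estimate $1 + |\xi|^2 \leq C_\star(1 + |\xi''_{(\ell^*)}|)^2$ for a constant $C_\star$ depending only on the $v_p^\ell$; substituting back yields $\epsilon(\xi) \geq B_1 (1+|\xi''_{(\ell^*)}|)^{-2\rho}$ for an explicit $B_1 > 0$, giving \eqref{it:condI} with $M = 2\rho$ for this $\xi$. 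The finitely many $\xi \in \Z^m \setminus 0$ with $|\xi| < n_0$ are handled by the preliminary observation, which furnishes $B_0 \dfn \min_{0 < |\xi| < n_0} \epsilon(\xi) > 0$; since $(1 + |\xi''_{(\ell)}|)^{-M} \leq 1$, the inequality in \eqref{it:condI} holds on this finite set with any $B \leq B_0$. Assembling the constants by setting $B \dfn \min(B_0, B_1, 1)$ and $M \dfn 2\rho$ finishes the argument. The main obstacle I anticipate is the arithmetic linking $|\xi|$ to $|\xi''_{(\ell^*)}|$ in the case $\epsilon(\xi) \leq 1$, where the proof must exploit the fact that the indices $j_p^{\ell^*}$ and $i_q^{\ell^*}$ together partition $\{1, \ldots, m\}$.
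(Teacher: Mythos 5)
Your proposal is correct, and in fact the paper offers nothing to compare it with: the authors explicitly omit the proof of Proposition~\ref{Prop:equivalenceNSA} as a ``standard calculation,'' and your argument supplies exactly that calculation. Both directions check out: the implication from condition~\eqref{it:condI} to~\eqref{it:condG} via domination of the $\ell^2$ sum by a single term with $\rho = M/2$, and the converse via the maximizing pair $(\ell^*,p^*)$, the comparison $1+|\xi|^2 \leq C_\star(1+|\xi''_{(\ell^*)}|)^2$ in the regime $\epsilon(\xi)\leq 1$ (which legitimately uses that the indices $j_p^{\ell^*}$, $i_q^{\ell^*}$ partition $\{1,\ldots,m\}$, so $|\xi|^2 = |\xi'_{(\ell^*)}|^2 + |\xi''_{(\ell^*)}|^2$), and the choice $M = 2\rho$. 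The one genuinely non-routine point — that~\eqref{eq:cond_intermed} says nothing about the finitely many $0<|\xi|<n_0$ — is handled correctly by your linearity/dilation observation that $\epsilon(\xi)=0$ would propagate to $\epsilon(k\xi)=0$ for all $k$, contradicting~\eqref{eq:cond_intermed} for large $k$; this is precisely the step that makes the statement hold for all nonzero integer frequencies.
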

We omit the proof as it relies on standard calculations. Now one can immediately recognize condition~\eqref{it:condI} above as the bona fide smooth version of the Diophantine condition in~\cite[Definition~1.2]{bfp17}.

In $T \times \TT^m$ consider an operator $P$ as in~\eqref{eq:PdefLaplaceBeltrami}. We shall say that $P$ satisfies the {\it non-simultaneous approximability condition} if one of the following holds for the family $\gr{a}_1, \ldots, \gr{a}_N$:
\begin{itemize}
\item there  exists $\ell \in \{1, \ldots, N\}$ such that $d^{\ell} = 0$;
\item we can relabel the indices in order to obtain $0 < N'\leq N$ such that none of $\gr{a}_1, \ldots, \gr{a}_{N'}$ is identically zero and when we apply the procedure described above we obtain a collection $v_1^{1}, \ldots, v_{m^{1}}^{1}, v_{1}^{2}, \ldots, v^{N'}_{m^{N'}}$ satisfying one of the equivalent properties in the Proposition~\ref{Prop:equivalenceNSA}.
\end{itemize}

\begin{Cor} \label{cor:equivalence_dc_conditions_torus} When $G = \TT^m$ our system $\mathcal{L}$ in~\eqref{eq:sys_fromaell} is~$\mathrm{(GH)}$ in $G$ if and only if $P$ satisfies the non-simultaneous approximability condition.
\end{Cor}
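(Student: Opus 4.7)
The plan is to assemble two equivalences that are already essentially established in Section~\ref{sec:nsa_vectors}: on one side, the equivalence between $(\mathrm{GH})$ of $\mathcal{L}$ in $\TT^m$ and the inequality~\eqref{eq:cond_intermed}, derived via the Fourier diagonalization of $\Delta_{\TT^m}$; on the other side, the equivalence between~\eqref{it:condG} and~\eqref{it:condI} given by Proposition~\ref{Prop:equivalenceNSA}. Property~\eqref{it:condI} is exactly the smooth non-simultaneous approximability condition on the vectors $v_p^\ell$.

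First I would split into the two cases appearing in the definition of the non-simultaneous approximability condition. Suppose there exists $\ell_0 \in \{1, \ldots, N\}$ with $d^{\ell_0} = 0$; then $m^{\ell_0} = m$ and the inclusion $\mathcal{L}_{\ell_0} \sset \gr{g}$ forces $\mathcal{L}_{\ell_0} = \gr{g}$, so $\gr{g} \sset \Span_\R \mathcal{L}$. Since $\{\del_{x_1}, \ldots, \del_{x_m}\} \sset \gr{g}$ is an elliptic system on $\TT^m$, it is trivially $(\mathrm{GH})$, and hence so is $\mathcal{L}$ by Lemma~\ref{lem:Lspanlie}. In this regime the first bullet of the non-simultaneous approximability condition is in force, so both sides of the claimed equivalence hold simultaneously.

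Next I would address the generic case, where no $d^\ell$ vanishes. Identically zero maps $\gr{a}_\ell$ contribute nothing to $\mathcal{L}$, so after relabeling I can assume that $\gr{a}_1, \ldots, \gr{a}_{N'}$ are exactly the non-identically-zero ones; for those indices, $m^\ell > 0$ and $d^\ell > 0$, so the standing hypotheses of Proposition~\ref{Prop:equivalenceNSA} are met. Applying Proposition~\ref{prop:ghfinitevfs} and Lemma~\ref{lem:Lspanlie} to the basis $\{\vv{L}_p^\ell\}$ of $\Span_\R \bigcup_{\ell = 1}^{N'} \mathcal{L}_\ell$, together with the explicit computation of $\|\vv{L}_p^\ell e^{i x \xi}\|_{L^2(\TT^m)}$ already carried out in the section, shows that $(\mathrm{GH})$ of $\mathcal{L}$ in $\TT^m$ is equivalent to inequality~\eqref{eq:cond_intermed} -- that is, to condition~\eqref{it:condG} of Proposition~\ref{Prop:equivalenceNSA}. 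Invoking Proposition~\ref{Prop:equivalenceNSA} then converts this into condition~\eqref{it:condI}, which is the second bullet of the non-simultaneous approximability condition.

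There is no real obstacle here: the computational heavy lifting is already accomplished in Section~\ref{sec:nsa_vectors} and in Proposition~\ref{Prop:equivalenceNSA}. The only points requiring care are the bookkeeping of the edge cases -- removing the identically zero $\gr{a}_\ell$ (harmless, as they contribute neither to $\mathcal{L}$ nor to any $v_p^\ell$), and observing that the first bullet of the definition is itself sufficient for $(\mathrm{GH})$ of $\mathcal{L}$ so that the dichotomy matches on both sides of the equivalence.
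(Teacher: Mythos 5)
Your proposal is correct and follows essentially the same route the paper intends: combine the equivalence derived in Section~\ref{sec:nsa_vectors} between $\mathrm{(GH)}$ of $\mathcal{L}$ in $\TT^m$ and inequality~\eqref{eq:cond_intermed} with Proposition~\ref{Prop:equivalenceNSA}, handling the case $d^{\ell}=0$ (where $\mathcal{L}_\ell=\gr{g}$ makes both sides trivially true) and the removal of identically zero $\gr{a}_\ell$ separately. This is exactly the bookkeeping the paper leaves implicit when stating Corollary~\ref{cor:equivalence_dc_conditions_torus} without proof.
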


\begin{Exa} On a compact, connected and oriented manifold $T$, define a LPDO $P$ on $T\times \TT^{2}$ by
  \begin{align*}
    P &\dfn \Delta_T^\sharp - \left(\del_{x_1} + \alpha \del_{x_2} \right)^{2} - \left(\beta \del_{x_1} + \del_{x_2} \right)^{2},
  \end{align*}
  where $\alpha, \beta \in \Q$ and $\alpha \beta \neq 1$. Since both $\alpha$ and $\beta$ are rational it is clear, thanks to a classical result from Greenfield and Wallach~\cite{gw72}, that neither $\vv{L}_1 \dfn \del_{x_1} + \alpha \del_{x_2}$ nor $\vv{L}_2 \dfn \beta \del_{x_1} + \del_{x_2}$ is globally hypoelliptic in $\TT^2$. It is plain however that $\vv{L}_1, \vv{L}_2$ together generate the tangent space of $\TT^{2}$ at every point therefore the system $\mathcal{L} \dfn \{\vv{L}_1, \vv{L}_2\}$ is $\mathrm{(GH)}$ in $\TT^{2}$ and $P$ is $\mathrm{(GH)}$ in $T\times \TT^{2}$. 
\end{Exa}

\subsection{Comparison with H{\"o}rmander's condition} \label{sec:hormander}

Back to a general compact Lie group $G$, with Lie algebra $\gr{g}$, let $\gr{h} \sset \gr{g}$ be a Lie subalgebra. We regard $\cinfty(T; \gr{h})$ as a subset of $\gr{X}(T \times G)$, the Lie algebra of all real, smooth vector fields on $T \times G$: as such, it is a Lie subalgebra of the latter. Indeed, given a basis $\vv{L}_1, \ldots, \vv{L}_r$ of $\gr{h}$, any $\gr{a} \in \cinfty(T; \gr{h})$ can be written as
\begin{align*}
  \gr{a}(t) &= \sum_{j = 1}^r a_j(t) \vv{L}_j, \quad t \in T,
\end{align*}
where $a_1, \ldots, a_r \in \cinfty(T; \R)$ are uniquely determined; from this observation our claim follows easily. Moreover, for any real vector field $\vv{W}$ in $T$ we have that
\begin{align*}
  \gr{a}_{\vv{W}} &\dfn \sum_{j = 1}^r (\vv{W} a_j) \vv{L}_j
\end{align*}
also belongs to $\cinfty(T; \gr{h})$ by definition.
\begin{Lem} The set of all vector fields $\vv{Y}$ in $T \times G$ of the form $\vv{Y} = \gr{a}(t, \vv{X}) + \vv{W}^\sharp$ where $\gr{a} \in \cinfty(T; \gr{h})$ and $\vv{W} \in \gr{X}(T)$ is a Lie subalgebra of $\gr{X}(T \times G)$.
  \begin{proof} Indeed, if $\vv{Y} = \gr{a}(t, \vv{X}) + \vv{W}^\sharp$, $\tilde{\vv{Y}} = \tilde{\gr{a}}(t, \vv{X}) + \tilde{\vv{W}}^\sharp$ are two such fields then
    \begin{align*}
      [\vv{Y}, \tilde{\vv{Y}}] = [\gr{a}(t, \vv{X}), \tilde{\gr{a}}(t, \vv{X})] + ( \tilde{\gr{a}}_{\vv{W}})(t, \vv{X}) - ( \gr{a}_{\tilde{\vv{W}}})(t, \vv{X}) + [\vv{W}, \tilde{\vv{W}}]^\sharp
    \end{align*}
    certainly has the same form.
  \end{proof}
\end{Lem}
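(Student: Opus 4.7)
The plan is to reduce the verification to closure under $\R$-linear combinations (essentially free) and closure under the Lie bracket, and then to split the latter into four pieces via bilinearity. Closure under scalar multiplication and addition is immediate, since both $\gr{a} \mapsto \gr{a}(t, \vv{X})$ and $\vv{W} \mapsto \vv{W}^\sharp$ are $\R$-linear, and $\cinfty(T;\gr{h})$, $\gr{X}(T)$ are both $\R$-vector spaces. The substance of the lemma is therefore to compute
\[
  \left[\gr{a}(t, \vv{X}) + \vv{W}^\sharp, \, \tilde{\gr{a}}(t, \vv{X}) + \tilde{\vv{W}}^\sharp\right]
\]
and show that the result again has the prescribed form.

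By bilinearity of the bracket in $\gr{X}(T\times G)$ this breaks into four summands. The easiest is $[\vv{W}^\sharp, \tilde{\vv{W}}^\sharp]$, which I would check on tensor products $\psi \otimes \phi$ (using that $\vv{W}^\sharp$ acts only on the $T$ variable) to conclude it equals $[\vv{W}, \tilde{\vv{W}}]^\sharp$, clearly of the correct form. For a cross bracket like $[\vv{W}^\sharp, \tilde{\gr{a}}(t, \vv{X})]$, I would fix a basis $\vv{L}_1, \ldots, \vv{L}_r$ of $\gr{h}$ and write $\tilde{\gr{a}}(t) = \sum_j \tilde{a}_j(t) \vv{L}_j$; then $\vv{W}^\sharp$ differentiates the coefficients $\tilde{a}_j \in \cinfty(T;\R)$ but commutes with the left-invariant fields $\vv{L}_j^\sharp$ (they act in different variables), yielding exactly $\tilde{\gr{a}}_{\vv{W}}(t, \vv{X})$ in the notation introduced just before the lemma. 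Since $\vv{W}\tilde{a}_j \in \cinfty(T;\R)$ and the $\vv{L}_j$'s still lie in $\gr{h}$, this is of the required form. The other cross bracket is symmetric.

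The most delicate piece is $[\gr{a}(t, \vv{X}), \tilde{\gr{a}}(t, \vv{X})]$. Writing both in the basis $\vv{L}_1,\ldots,\vv{L}_r$, expanding gives potential terms of two kinds: ``coefficient-differentiating'' terms of the shape $a_i(\vv{L}_j^\sharp \tilde{a}_k)$, which vanish because the left-invariant fields $\vv{L}_j^\sharp$ act only in the $G$ variable while the coefficients depend only on $t$; and ``bracket'' terms of the shape $a_i(t)\tilde{a}_j(t)[\vv{L}_i, \vv{L}_j]^\sharp$. Here is the one place where the hypothesis that $\gr{h}$ is a \emph{Lie subalgebra} of $\gr{g}$ is used: it guarantees $[\vv{L}_i,\vv{L}_j] \in \gr{h}$, so the surviving sum is of the form $\gr{b}(t, \vv{X})$ with $\gr{b} \in \cinfty(T;\gr{h})$.

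The only obstacle worth flagging is the careful bookkeeping of which terms vanish when writing out $[\gr{a}(t, \vv{X}), \tilde{\gr{a}}(t, \vv{X})]$ in coordinates; once one observes that the $\vv{L}_j$'s annihilate functions pulled back from $T$, everything collapses to brackets inside $\gr{h}$, and assembling the four pieces yields the explicit identity
\[
  [\vv{Y}, \tilde{\vv{Y}}] = [\gr{a}(t, \vv{X}), \tilde{\gr{a}}(t, \vv{X})] + \tilde{\gr{a}}_{\vv{W}}(t, \vv{X}) - \gr{a}_{\tilde{\vv{W}}}(t, \vv{X}) + [\vv{W}, \tilde{\vv{W}}]^\sharp,
\]
whose summands now visibly belong to the stated class.
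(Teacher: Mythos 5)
Your proposal is correct and follows essentially the same route as the paper: the paper's proof is exactly the four-term bracket identity you arrive at, with the verification details (vanishing of the coefficient-differentiating terms, commutation of $\vv{W}^\sharp$ with the $\vv{L}_j^\sharp$, and closedness of $\gr{h}$ under brackets) left implicit, relying on the remarks preceding the lemma. Your write-up just makes those checks explicit.
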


Now let $\mathcal{L}$ be as in~\eqref{eq:sys_fromaell} and let $\gr{h} \dfn \lie \mathcal{L} \sset \gr{g}$. Then, as we have seen, $\Theta \dfn \cinfty(T; \gr{h}) + \gr{X}(T)$ is a Lie subalgebra of $\gr{X}(T \times G)$. Given $\vv{Y}_1, \ldots, \vv{Y}_N \in \Theta$, all of them are of form $\vv{Y}_\ell = \gr{a}_\ell(t, \vv{X}) + \vv{W}_\ell^\sharp$ for some $\gr{a}_\ell \in \cinfty(T; \gr{h})$ and $\vv{W}_\ell \in \gr{X}(T)$. Assume that for a given $(t, x) \in T \times G$ the following condition holds:
\begin{align}\label{eq:Hormandercondition}
  \text{$\exists \vv{Z}_1, \ldots, \vv{Z}_\nu \in \gr{X}(T)$ such that the set $\{\vv{Z}_1^\sharp, \ldots, \vv{Z}_\nu^\sharp, \vv{Y}_1, \ldots, \vv{Y}_N\}$ is of finite type at $(t,x)$}.
\end{align}
Then it follows from the fact that $\Theta$ is a Lie algebra containing $\vv{Z}_1^\sharp, \ldots, \vv{Z}_\nu^\sharp, \vv{Y}_1, \ldots, \vv{Y}_N$ that
\begin{align*}
  \Theta_{(t,x)} \dfn \left \{ \vv{Y}|_{(t,x)} \st \vv{Y} \in \Theta \right \} = T_{(t,x)} (T \times G)
\end{align*}
so in particular $(\pi_G)_* \Theta_{(t,x)} = T_x G$ where $(\pi_G)_*: T_{(t,x)} (T \times G) \rarr T_x G$ is the projection map.
\begin{Prop} If $(\pi_G)_* \Theta_{(t,x)} = T_x G$ for some $(t,x) \in T \times G$ then $\lie \mathcal{L} = \gr{g}$.
  \begin{proof} Given $\vv{X} \in \gr{g}$ arbitrary, there exists $\vv{Y} \in \Theta$ such that $(\pi_G)_* \vv{Y}|_{(t,x)} = \vv{X}|_x$. If we write
    \begin{align*}
      \vv{Y} &= \sum_{j = 1}^r a_j \vv{L}_j^\sharp + \vv{W}^\sharp,
    \end{align*}
    then
    \begin{align*}
      \vv{Y}|_{(t,x)} &= \sum_{j = 1}^r a_j(t) \vv{L}_j|_x + \vv{W}|_t,
    \end{align*}
    hence
    \begin{align*}
      \vv{X}|_x = (\pi_G)_* \Theta|_{(t,x)} = \sum_{j = 1}^r a_j(t) \vv{L}_j|_x.
    \end{align*}
    As two left-invariant vector fields are the same if and only if they match at a single point we conclude
    \begin{align*}
      \sum_{j = 1}^r a_j(t) \vv{L}_j &= \vv{X}
    \end{align*}
    where the left-hand side belongs to $\lie \mathcal{L}$ since so do $\vv{L}_1, \ldots, \vv{L}_r$ (recall that $t \in T$ remains fixed).
  \end{proof}
\end{Prop}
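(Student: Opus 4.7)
The plan is to unpack the hypothesis pointwise for an arbitrary element of $\gr{g}$. Fix $\vv{X} \in \gr{g}$; since by hypothesis $(\pi_G)_* \Theta_{(t,x)} = T_x G$, I can find $\vv{Y} \in \Theta$ whose value at $(t,x)$ projects under $(\pi_G)_*$ to the vector $\vv{X}|_x \in T_x G$. Using the description $\Theta = \cinfty(T; \gr{h}) + \gr{X}(T)$, I will fix a basis $\vv{L}_1, \ldots, \vv{L}_r$ of $\gr{h} = \lie \mathcal{L}$ and write
\begin{align*}
  \vv{Y} &= \sum_{j = 1}^r a_j \vv{L}_j^\sharp + \vv{W}^\sharp
\end{align*}
with $a_j \in \cinfty(T; \R)$ and $\vv{W} \in \gr{X}(T)$.

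Next I evaluate $\vv{Y}$ at $(t,x)$ with the aid of the canonical splitting $T_{(t,x)}(T \times G) \cong T_t T \oplus T_x G$. Each $\vv{L}_j^\sharp|_{(t,x)}$ sits entirely in the $T_x G$ summand and contributes $a_j(t) \vv{L}_j|_x$, while $\vv{W}^\sharp|_{(t,x)}$ lies in the $T_t T$ summand and hence dies under $(\pi_G)_*$. Matching the $G$-component with $\vv{X}|_x$ therefore yields the identity
\begin{align*}
  \sum_{j = 1}^r a_j(t) \vv{L}_j \bigg|_x &= \vv{X}|_x \quad \text{in } T_x G.
\end{align*}

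The crux is that $\sum_{j = 1}^r a_j(t) \vv{L}_j$ (with $t$ now fixed) and $\vv{X}$ are both left-invariant vector fields on $G$. Via the evaluation isomorphism~\eqref{eq:iso_evale} combined with left-translations, two left-invariant fields that agree at a single point must coincide globally, so the above equality lifts to an equality in $\gr{g}$. Since $\vv{L}_1, \ldots, \vv{L}_r \in \gr{h}$, this expresses $\vv{X}$ as a real linear combination of elements of $\gr{h}$, hence $\vv{X} \in \gr{h} = \lie \mathcal{L}$. As $\vv{X} \in \gr{g}$ was arbitrary, we conclude $\lie \mathcal{L} = \gr{g}$.

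There is no genuine obstacle in this argument; it is essentially bookkeeping, the only non-formal ingredient being the rigidity of left-invariant vector fields. The key insight I want to highlight in the write-up is that the $T$-component of $\vv{Y}$ is irrelevant because of the projection $(\pi_G)_*$, and the $T$-dependent coefficients $a_j(t)$ are harmless precisely because we only need the resulting \emph{element} of $\gr{h}$, not any kind of $t$-independence.
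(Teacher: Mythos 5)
Your proposal is correct and follows essentially the same route as the paper's own proof: choose $\vv{Y} \in \Theta$ hitting $\vv{X}|_x$ under $(\pi_G)_*$, expand in a basis of $\gr{h} = \lie \mathcal{L}$, observe that the $\vv{W}^\sharp$ term is killed by the projection, and invoke the rigidity of left-invariant vector fields (agreement at one point implies equality) to promote the pointwise identity to $\vv{X} = \sum_j a_j(t)\vv{L}_j \in \lie \mathcal{L}$. No gaps.
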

Since $\gr{g}$ is~$\mathrm{(GH)}$ in $G$ we conclude from Lemma~\ref{lem:Lspanlie} that:
\begin{Cor} \label{cor:hormander_point} If $\vv{Y}_1, \ldots, \vv{Y}_N$ satisfy property~\eqref{eq:Hormandercondition} at some point $(t,x) \in T \times G$ then $\mathcal{L}$ is~$\mathrm{(GH)}$ in $G$.
\end{Cor}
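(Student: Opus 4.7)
The plan is to chain together the Lie-algebraic observations developed in the paragraphs immediately preceding the corollary with the preceding Proposition and Lemma~\ref{lem:Lspanlie}. First I would unpack hypothesis~\eqref{eq:Hormandercondition}: by definition of the finite type condition at $(t,x)$, the iterated Lie brackets of $\vv{Z}_1^\sharp, \ldots, \vv{Z}_\nu^\sharp, \vv{Y}_1, \ldots, \vv{Y}_N$, together with these vector fields themselves, evaluate at $(t,x)$ to a family that spans $T_{(t,x)}(T\times G)$.

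Next, I would invoke the fact (already observed in the text) that $\Theta \dfn \cinfty(T; \gr{h}) + \gr{X}(T)$ is a Lie subalgebra of $\gr{X}(T \times G)$ containing all the $\vv{Y}_\ell$ (because $\gr{a}_\ell \in \cinfty(T;\mathcal{L}_\ell) \sset \cinfty(T;\gr{h})$) and all the $\vv{Z}_i^\sharp$ (because $\gr{X}(T) \sset \Theta$ via the lift $\vv{W} \mapsto \vv{W}^\sharp$). Since $\Theta$ is closed under iterated brackets, every one of the iterated commutators appearing in the finite type condition still lies in $\Theta$, and therefore $\Theta_{(t,x)} = T_{(t,x)}(T\times G)$. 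Applying the surjective linear map $(\pi_G)_*: T_{(t,x)}(T \times G) \rarr T_x G$ then gives $(\pi_G)_* \Theta_{(t,x)} = T_x G$.

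At this point I would apply the previous Proposition directly: it asserts that $(\pi_G)_* \Theta_{(t,x)} = T_x G$ at some single point $(t,x)$ already forces $\lie \mathcal{L} = \gr{g}$. Since $\gr{g}$ is a basis-generating family of left-invariant vector fields on $G$, it constitutes an elliptic system on $G$, and therefore is trivially~$\mathrm{(GH)}$ in $G$ by standard elliptic regularity. Finally, Lemma~\ref{lem:Lspanlie} tells us that $\mathcal{L}$ is~$\mathrm{(GH)}$ in $G$ if and only if $\lie \mathcal{L}$ is, so we conclude.

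There is no real obstacle here: every ingredient has been prepared in the preceding Lemma, Proposition, and the remarks on $\Theta$. The only thing to be careful about is to observe that the extra fields $\vv{Z}_i^\sharp$ introduced by the finite type hypothesis do indeed lie in $\Theta$ (so that iterated brackets stay in $\Theta$), which is immediate from the definition of $\Theta$ as containing $\gr{X}(T)$ via the lift.
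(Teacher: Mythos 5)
Your proposal is correct and follows essentially the same route as the paper: the finite type hypothesis plus the fact that $\Theta = \cinfty(T;\gr{h}) + \gr{X}(T)$ is a Lie subalgebra containing the $\vv{Z}_i^\sharp$ and $\vv{Y}_\ell$ gives $(\pi_G)_*\Theta_{(t,x)} = T_xG$, the preceding Proposition then yields $\lie\mathcal{L} = \gr{g}$, and since $\gr{g}$ is an elliptic, hence globally hypoelliptic, system on $G$, Lemma~\ref{lem:Lspanlie} transfers this to $\mathcal{L}$. This is exactly the paper's argument.
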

Yet, very simple examples show that we may have $\lie \mathcal{L} = \gr{g}$ -- which is \emph{stronger} than $\mathcal{L}$ being~$\mathrm{(GH)}$ in $G$ -- while the finite type condition fails by far at every point: back to Example~\ref{exa:lines}, if $m \geq 2$ and $a_1, \ldots, a_N$ have pairwise disjoint supports then the finite type condition~\eqref{eq:Hormandercondition} for $\vv{Y}_\ell \dfn a_\ell(t) \vv{X}_\ell^\sharp + \vv{W}_\ell^\sharp$, $\ell \in \{1, \ldots, N\}$, fails everywhere since no $\vv{X}_\ell$ can generate the whole $\gr{g}$.


\subsection{A necessary condition based on Sussmann's orbits} \label{sec:orbits}

Let $M$ be a compact manifold as in Section~\ref{sec:preliminaries}. We will now show a simple result which illustrates the connection between the topology of the Sussmann's orbits of a system $\mathcal{L}$ of vector fields on $M$ -- or, rather, how they are immersed into the ambient manifold -- and the global hypoellipticity of $\mathcal{L}$ in $M$. This has some interesting consequences (Corollary~\ref{cor:GnonAbeliannoGH}) which better contextualize the hypotheses of Theorem~\ref{thm:thm15}.

Recall that the \emph{orbit} of $\mathcal{L}$ through $x_0$ is the set of all $x \in M$ enjoying the following property: there exists a continuous curve $\gamma:[0, \delta] \rarr M$ (for some $\delta > 0$) with endpoints $\gamma(0) = x_0$ and $\gamma(\delta) = x$ and a partition $0 = t_0 < t_1 < \cdots < t_\kappa = \delta$ such that on each open subinterval $(t_j, t_{j + 1})$ -- for $j \in \{0, \ldots, \kappa - 1\}$ -- the curve $\gamma$ is $\mathscr{C}^1$ and an integral curve of some $\vv{L}_j \in \mathcal{L}$. We denote it by $\mathrm{Orb}_{\mathcal{L}}(x_0)$. Sussmann's Orbit Theorem~\cite{sussmann73} states that the orbits of $\mathcal{L}$ are all immersed connected submanifolds of $M$.

If for simplicity we assume that $M = G$ is a compact Lie group and $\mathcal{L} \sset \gr{g}$ is a system of left-invariant vector fields on $G$, then one has a much more precise result (see e.g.~\cite[Lemma~3.4]{sachkov07}):
\begin{enumerate}
\item $\mathrm{Orb}_\mathcal{L}(e)$ is the connected Lie subgroup of $G$ whose Lie algebra is $\lie \mathcal{L} \sset \gr{g}$; and
\item $\mathrm{Orb}_\mathcal{L}(x_0) = x_0 \cdot \mathrm{Orb}_\mathcal{L}(e)$ for every $x_0 \in G$.
\end{enumerate}
In that case, the orbits are precisely the integral submanifolds of the regular involutive distribution
\begin{align*}
  \lie_x \mathcal{L} &\dfn \{ \vv{X}|_x \st \vv{X} \in \lie \mathcal{L} \} \sset T_x G, \quad x \in G,
\end{align*}
so these results are actually a consequence of Frobenius Theorem.
\begin{Prop} \label{prop:orbits} If $\mathcal{L}$ is~$\mathrm{(GH)}$ then all of its orbits are dense in $G$.
  \begin{proof} It is enough to prove that $\mathrm{Orb}_\mathcal{L}(e)$ is dense in $G$ as the remaining orbits are left translations of it. Let $H \sset G$ denote its closure. It is certainly a subgroup of $G$, and since it is closed it is a Lie subgroup of $G$. Moreover, the set $G / H$ is a smooth manifold with dimension $\dim G - \dim H$, which is positive if one assumes that $H \neq G$, and the canonical projection $\pi: G \rarr G/H$ is a smooth submersion~\cite[Theorem~9.22]{lee_smooth}. In that case, let $v \in \mathscr{C}^1(G/H) \setminus \cinfty(G/H)$ and take $u \dfn \pi^* v \in \mathscr{C}^1(G) \setminus \cinfty(G)$. Then $u$ is annihilated by every $\vv{X} \in \gr{g}$ tangent to $H$, hence in particular by any $\vv{X} \in \mathcal{L}$ since
    \begin{align*}
      \mathcal{L} \sset \lie \mathcal{L} \sset \gr{h} \dfn \text{the Lie algebra of $H$}.
    \end{align*}
    Thus $\mathcal{L}$ would not be~$\mathrm{(GH)}$.
  \end{proof}
\end{Prop}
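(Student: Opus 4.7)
My plan is to prove the contrapositive: if some orbit fails to be dense in $G$, then $\mathcal{L}$ cannot be~$\mathrm{(GH)}$. Using the left-invariant structure recalled right before the statement, every orbit is a left translation of $\orb_{\mathcal{L}}(e)$, so it suffices to assume $\orb_{\mathcal{L}}(e)$ is not dense and produce an offending distribution.

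First I would let $H \dfn \overline{\orb_\mathcal{L}(e)} \sset G$. Since $\orb_\mathcal{L}(e)$ is a subgroup of $G$ (it coincides with the connected Lie subgroup with Lie algebra $\lie \mathcal{L}$), taking closures preserves this, so $H$ is a closed subgroup. By Cartan's closed subgroup theorem, $H$ is therefore an embedded Lie subgroup of $G$. The failure of density means $H \neq G$, and by standard quotient theory the coset space $G/H$ inherits a smooth manifold structure of positive dimension $\dim G - \dim H > 0$, with the canonical projection $\pi: G \rarr G/H$ being a smooth surjective submersion.

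Next I would manufacture a non-smooth $\mathcal{L}$-invariant object on $G$ by pullback. Pick any $v \in \mathscr{C}^0(G/H) \setminus \cinfty(G/H)$ (or more directly a continuous non-smooth function, whose existence is guaranteed by $\dim G/H \geq 1$) and set $u \dfn \pi^* v = v \circ \pi$. Because $\pi$ is a smooth submersion, $u$ is at least continuous but inherits the non-smoothness of $v$ at any point where $v$ fails to be smooth, so $u \in \D'(G) \setminus \cinfty(G)$. The key property is that $u$ is constant along each fiber of $\pi$, i.e.~along each left coset $xH$. Any left-invariant vector field $\vv{X} \in \mathcal{L}$ satisfies $\vv{X} \in \lie \mathcal{L} \sset \gr{h}$ (the Lie algebra of $H$), and a left-invariant vector field with value in $\gr{h}$ is tangent to every left coset of $H$; therefore $\vv{X} u = 0$ on $G$. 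In particular $\vv{X} u \in \cinfty(G)$ for every $\vv{X} \in \mathcal{L}$, so $u$ witnesses the failure of global hypoellipticity.

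The only delicate point is ensuring $u = \pi^* v$ is genuinely non-smooth as a distribution on $G$, which I expect to handle by choosing $v$ to be smooth except on a small set; since $\pi$ is a submersion, local slice charts near any chosen point show that smoothness of $u$ on a neighborhood forces smoothness of $v$ on the image neighborhood in $G/H$. This completes the contradiction, establishing density of $\orb_\mathcal{L}(e)$ and hence of every orbit.
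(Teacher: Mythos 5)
Your proposal is correct and follows essentially the same route as the paper: take $H \dfn \overline{\orb_\mathcal{L}(e)}$, use Cartan's closed subgroup theorem and the quotient $G/H$ of positive dimension, and pull back a non-smooth function, which is killed by every element of $\mathcal{L}$ because $\mathcal{L} \sset \lie \mathcal{L} \sset \gr{h}$. The only cosmetic difference is that you take $v$ merely continuous, so the identity $\vv{X}u = 0$ must be read distributionally (which is fine, since left-invariant fields are skew-symmetric w.r.t.\ Haar measure, or one can simply take $v \in \mathscr{C}^1$ as the paper does to get it classically).
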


Having in mind condition~\eqref{it:thm15_hyp1} in Theorem~\ref{thm:thm15} we would like to point out in our next result that one must be really careful when assigning hypotheses to $P$ in order to ensure its global hypoellipticity: too strong ones may inadvertently also ensure that $G$ must have been a torus to start with!
\begin{Cor} \label{cor:GnonAbeliannoGH} If $G$ is a non-commutative Lie group and $\mathcal{L} \sset \gr{g}$ is a family of pairwise commuting vector fields then $\mathcal{L}$ cannot be~$\mathrm{(GH)}$.
  \begin{proof} Notice that $\lie \mathcal{L}$ is a commutative Lie subalgebra of $\gr{g}$, hence must be contained in a maximal commutative Lie subalgebra $\gr{h} \sset \gr{g}$. Indeed, define inductively a sequence $\{\gr{h}_\nu \}_{\nu \in \Z_+}$ of linear subspaces of $\gr{g}$ by $\gr{h}_0 \dfn \lie \mathcal{L}$ and $\gr{h}_{\nu + 1} \dfn \Span_\R (\gr{h}_{\nu} \cup \{\vv{X}_\nu\})$, where $\vv{X}_\nu \notin \gr{h}_{\nu}$ but commutes with every element in $\gr{h}_{\nu}$, for $\nu \in \Z_+$. This is an increasing family of commutative Lie subalgebras of $\gr{g}$ which must stabilize at some step by finite dimensionality -- it means precisely that the said step, call it $\gr{h}$, defines a maximal commutative Lie subalgebra of $\gr{g}$.

    Let then $H \sset G$ be the unique connected Lie subgroup of $G$ whose Lie algebra is $\gr{h}$. It is certainly commutative (since so is $\gr{h}$), and it must be closed: otherwise, its closure $\overline{H}$ would be a bigger commutative, connected subgroup of $G$, hence it is a Lie subgroup of $G$, whose Lie algebra $\overline{\gr{h}}$ would contain $\gr{h}$ properly (as $H \sset \overline{H}$) and would also be commutative (since so is $\overline{H}$), thus violating the maximality of $\gr{h}$.

    Because $\lie \mathcal{L} \sset \gr{h}$ we have that every vector field in $\mathcal{L}$ is tangent to $H$, hence $\mathrm{Orb}_\mathcal{L}(e) \sset H$ and thus
    \begin{align*}
      \overline{\mathrm{Orb}_\mathcal{L}(e)} \sset \overline{H} \neq G
    \end{align*}
    as we are assuming $G$ non-commutative. In particular $\mathrm{Orb}_\mathcal{L}(e)$ is not dense in $G$ and the conclusion follows from Proposition~\ref{prop:orbits}.
  \end{proof}
\end{Cor}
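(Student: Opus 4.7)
My plan is to combine Proposition~\ref{prop:orbits} with a maximality argument in $\gr{g}$: if I can trap the orbit $\orb_{\mathcal{L}}(e)$ inside a proper closed subgroup $H \sset G$, then density fails, and so global hypoellipticity cannot hold. Since $\mathcal{L}$ consists of pairwise commuting elements of $\gr{g}$, the Lie subalgebra $\lie \mathcal{L}$ they generate has all iterated brackets equal to zero and is therefore itself commutative; this is the starting point.

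Next I will enlarge $\lie \mathcal{L}$ to a maximal commutative Lie subalgebra $\gr{h} \sset \gr{g}$. Concretely, I can build an increasing chain $\gr{h}_0 \dfn \lie \mathcal{L} \sset \gr{h}_1 \sset \cdots$ by adjoining, at each step, some $\vv{X}_\nu \in \gr{g} \setminus \gr{h}_\nu$ that commutes with every element of $\gr{h}_\nu$, and stopping when no such element exists. Because $\dim \gr{g} < \infty$ the chain stabilizes, and the terminal subspace $\gr{h}$ is a commutative Lie subalgebra of $\gr{g}$ that is maximal with respect to inclusion among such subalgebras.

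Let $H \sset G$ be the connected Lie subgroup with Lie algebra $\gr{h}$; it is commutative because $\gr{h}$ is. The key step, and the only mildly subtle one, is to show that $H$ is closed in $G$. If not, $\overline{H}$ is a connected closed subgroup of $G$, hence a Lie subgroup whose Lie algebra $\overline{\gr{h}}$ strictly contains $\gr{h}$. Because multiplication and inversion on $G$ are continuous and commutativity is a closed condition (the commutator map is continuous and vanishes on the dense subset $H \times H \sset \overline{H} \times \overline{H}$), $\overline{H}$ is also commutative, so $\overline{\gr{h}}$ is a commutative Lie subalgebra strictly larger than $\gr{h}$, contradicting maximality. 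Hence $H$ is closed. Since $G$ is non-commutative while $H$ is commutative, necessarily $H \neq G$.

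Finally, every $\vv{X} \in \mathcal{L} \sset \lie \mathcal{L} \sset \gr{h}$ is tangent to $H$, so the integral curves of elements of $\mathcal{L}$ through $e$ stay inside $H$; by the description of orbits of left-invariant systems recalled just before Proposition~\ref{prop:orbits} this yields $\orb_{\mathcal{L}}(e) \sset H$. Taking closures gives $\overline{\orb_{\mathcal{L}}(e)} \sset H \neq G$, so $\orb_{\mathcal{L}}(e)$ is not dense in $G$. Proposition~\ref{prop:orbits} then forbids $\mathcal{L}$ from being $\mathrm{(GH)}$, completing the argument. The only step that is not completely formal is the closedness of $H$, where one must justify that commutativity passes to the closure; I expect this to be the main (but minor) obstacle.
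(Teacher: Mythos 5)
Your proposal is correct and follows essentially the same route as the paper's proof: extend $\lie \mathcal{L}$ to a maximal commutative subalgebra $\gr{h}$ by a finite-dimensionality chain argument, show the corresponding connected subgroup $H$ is closed (via maximality, with commutativity passing to the closure), trap $\orb_{\mathcal{L}}(e)$ inside $H \neq G$, and invoke Proposition~\ref{prop:orbits}. The closedness step you flag as the only subtle point is handled in the paper by exactly the same maximality contradiction, and your continuity-of-the-commutator justification fills in the one detail the paper leaves implicit.
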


\section{Operators with mostly constant coefficients} \label{sec:mostly_constant}

In this final section we explore other results ensuring global hypoellipticity of operators $P$ as in~\eqref{eq:Pdef}. Here we allow more general ``leading terms'' $Q$, unlike Theorem~\ref{thm:thm15} in which we have $Q = \Delta_T$, but paying the price of more restrictive assumptions on the vector fields $\gr{a}_\ell(t, \vv{X})$. The following one is an extension of~\cite[Theorem~1.9]{bfp17}. 
\begin{Thm} \label{thm:thm19} Let $P$ in~\eqref{eq:Pdef} be of the form
  \begin{align*}
    P &= Q^\sharp - \sum_{\ell = 1}^{N'} \left( \vv{L}_\ell^\sharp + \vv{W}_\ell^\sharp \right)^2 - \sum_{\ell = N' + 1}^{N} \left( \gr{a}_\ell(t, \vv{X}) + \vv{W}_\ell^\sharp \right)^2
  \end{align*}
  where $Q$ is positive semidefinite in $T$ -- i.e.~$\langle Q \psi, \psi \rangle_{L^2(T)} \geq 0$ for every $\psi \in \cinfty(T)$ --, $\vv{L}_1, \ldots, \vv{L}_{N'} \in \gr{g}$ and $\vv{W}_1, \ldots, \vv{W}_N$ are skew-symmetric and such that $\tilde{P}= Q- \vv{W}_1^{2}- \cdots - \vv{W}_{N}^{2}$ is elliptic.

  Assume moreover that
  \begin{enumerate}
  \item \label{it:thm19_hyp1} $\vv{W}_1, \ldots, \vv{W}_{N'}$ commute with $\Delta_T$ and that
  \item \label{it:thm19_hyp3} the system $\{ \vv{Y}_\ell \dfn \vv{L}_\ell^\sharp + \vv{W}_\ell^\sharp \st \ell = 1, \ldots, N'\}$ is~$\mathrm{(GH)}$ in $T \times G$.
  \end{enumerate}
  Then $P$ is~$\mathrm{(GH)}$ in $T \times G$.
\end{Thm}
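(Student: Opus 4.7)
The approach will mirror that of Theorem~\ref{thm:thm15}: pass to the partial Fourier projection on the $G$-factor, exploit the ellipticity of $\tilde{P}$ via Corollaries~\ref{cor:ulambdasmooth} and~\ref{cor:est_small_cone} to obtain rapid decay of $\mathcal{F}^T_\mu \mathcal{F}^G_\lambda u$ on the cone $\Lambda_\theta$, and combine this with a Poincar\'e-type estimate derived from hypothesis~\eqref{it:thm19_hyp3} to handle the off-cone regime. The substitute for the Diophantine inequality~\eqref{eq:diophantine} on $G$ used in the proof of Theorem~\ref{thm:thm15} is now a joint spectral inequality on $T\times G$.

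To produce it, observe that for each $\ell \in \{1,\ldots,N'\}$ the operator $\vv{Y}_\ell = \vv{L}_\ell^\sharp + \vv{W}_\ell^\sharp$ commutes with $\Delta_G^\sharp$ (since $\vv{L}_\ell \in \gr{g}$ commutes with $\Delta_G$ by the $\ad$-invariance of the metric, and $\vv{W}_\ell^\sharp$ involves only the $T$-variables) and with $\Delta_T^\sharp$ (by hypothesis~\eqref{it:thm19_hyp1}, together with the fact that $\vv{L}_\ell^\sharp$ involves only the $G$-variables). Hence $\vv{Y}_\ell$ commutes with $\Delta=\Delta_T^\sharp+\Delta_G^\sharp$ and preserves each subspace $E^T_\mu \otimes E^G_\lambda \sset E_{\mu+\lambda}$. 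Applying Proposition~\ref{prop:ghfinitevfs} to the ambient manifold $T\times G$, hypothesis~\eqref{it:thm19_hyp3} delivers constants $C_1,\rho>0$ and $\alpha_0 \in \sigma(\Delta)$ such that
\begin{equation*}
\sum_{\ell=1}^{N'} \|\vv{Y}_\ell \phi\|^2_{L^2(T\times G)} \geq C_1 (1+\alpha)^{-2\rho} \|\phi\|^2_{L^2(T\times G)}, \quad \forall \phi \in E_\alpha, \ \alpha \geq \alpha_0.
\end{equation*}

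Now let $u \in \D'(T\times G)$ with $f \dfn Pu \in \cinfty(T\times G)$, and set $u_\lambda \dfn \mathcal{F}^G_\lambda u$, $u_{\mu,\lambda} \dfn \mathcal{F}^T_\mu u_\lambda \in E^T_\mu \otimes E^G_\lambda$, analogously for $f$. Then $u_\lambda \in \cinfty(T;E^G_\lambda)$ by Corollary~\ref{cor:ulambdasmooth}, and since $P$ commutes with $\Delta_G^\sharp$, Proposition~\ref{prop:invopscommft} gives $Pu_\lambda=f_\lambda$. For $\lambda \geq \alpha_0$, the energy identity (Lemma~\ref{lem:energy_identity}), the positivity of $Q^\sharp$, the $\vv{Y}_\ell$-invariance of each $E^T_\mu \otimes E^G_\lambda$ (which makes the terms $\vv{Y}_\ell u_{\mu,\lambda}$ pairwise orthogonal in $\mu$), and the displayed inequality combine to yield
\begin{equation*}
\langle f_\lambda, u_\lambda\rangle_{L^2(T\times G)} = \langle P u_\lambda, u_\lambda\rangle_{L^2(T\times G)} \geq C_1 \sum_\mu (1+\mu+\lambda)^{-2\rho}\|u_{\mu,\lambda}\|^2_{L^2(T\times G)}.
\end{equation*}
Expanding the left-hand side as $\sum_\mu \langle f_{\mu,\lambda}, u_{\mu,\lambda}\rangle$ and applying Cauchy-Schwarz with the weights $(1+\mu+\lambda)^{\pm\rho}$ gives
\begin{equation*}
\sum_\mu (1+\mu+\lambda)^{-2\rho}\|u_{\mu,\lambda}\|^2 \leq C_1^{-2} \sum_\mu (1+\mu+\lambda)^{2\rho}\|f_{\mu,\lambda}\|^2,
\end{equation*}
and smoothness of $f$ (Proposition~\ref{prop:pw_product}), together with $(1+\mu+\lambda) \geq \sqrt{(1+\mu)(1+\lambda)}$ and the convergence of $\sum_{\mu\in\sigma(\Delta_T)}(1+\mu)^{-r}$ for $r$ large (Weyl's asymptotic formula~\eqref{eq:weyl}), bounds the right-hand side by $C_s(1+\lambda)^{-s}$ for every $s>0$.

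Consequently $\|u_{\mu,\lambda}\|^2 \leq C_s(1+\mu+\lambda)^{2\rho}(1+\lambda)^{-s}$ for all $(\mu,\lambda)$ with $\lambda \geq \alpha_0$. Off $\Lambda_\theta$, where $1+\mu+\lambda \leq 2(1+\lambda)^{1/\theta}$, this translates into rapid decay of $\|u_{\mu,\lambda}\|$ in $1+\mu+\lambda$ upon taking $s$ large enough relative to $\rho/\theta$; on $\Lambda_\theta$ the same decay is supplied by Corollary~\ref{cor:est_small_cone}. After handling the finitely many $\lambda < \alpha_0$ trivially (each $u_\lambda$ is already smooth on $T$, so $\|u_{\mu,\lambda}\|$ decays rapidly in $\mu$), Proposition~\ref{prop:pw_product} concludes that $u \in \cinfty(T\times G)$. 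The main subtlety is the weighted Cauchy-Schwarz step, which controls only a $\mu$-summed quantity and therefore does not dominate individual $\|u_{\mu,\lambda}\|$ for very large $\mu$; coupling it with the cone decay furnished by the ellipticity of $\tilde{P}$ is what makes the argument close.
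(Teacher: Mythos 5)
Your proposal is correct and takes essentially the same route as the paper's proof: hypothesis~\eqref{it:thm19_hyp1} makes the $\vv{Y}_\ell$ commute with the full Laplacian $\Delta$ on $T\times G$, so Proposition~\ref{prop:ghfinitevfs} converts hypothesis~\eqref{it:thm19_hyp3} into the spectral inequality on $E^T_\mu\otimes E^G_\lambda$, which is then combined with the energy identity (Lemma~\ref{lem:energy_identity}), the positivity of $Q$, and the cone estimate coming from the ellipticity of $\tilde P$ (Corollaries~\ref{cor:ulambdasmooth} and~\ref{cor:est_small_cone}), exactly as in the paper. The only (harmless) difference is the final bookkeeping: you extract pointwise bounds on each $\| \mathcal{F}^T_\mu \mathcal{F}^G_\lambda (u)\|_{L^2(T\times G)}$ via a weighted Cauchy--Schwarz and conclude with Proposition~\ref{prop:pw_product}, whereas the paper sums over $\mu$ to bound $\| \mathcal{F}^G_\lambda (u)\|_{L^2(T\times G)}$ and invokes Corollary~\ref{cor:partial_smoothness_converse}.
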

\begin{Rem} Property~\eqref{it:thm19_hyp3} above is stronger than $\mathcal{L}$ in~\eqref{eq:sys_fromaell} being~$\mathrm{(GH)}$ in $G$ as it clearly implies~\eqref{eq:reg_aa} -- which is equivalent to the latter by Proposition~\ref{Pro:66impliesLGH} -- independently of the remaining assumptions.
\end{Rem}
\begin{proof} Hypothesis~\eqref{it:thm19_hyp1} ensures that the vector fields $\vv{Y}_1, \ldots, \vv{Y}_{N'}$ commute with the full Laplace-Beltrami operator $\Delta = \Delta_T^\sharp + \Delta_G^\sharp$ on $T \times G$. Therefore, hypothesis~\eqref{it:thm19_hyp3} implies, by means of Proposition~\ref{prop:ghfinitevfs} (see also Corollary~\ref{cor:relationshipeigens} and the results in Section~\ref{sec:partial_FPM}), the following: there exist $C, R, \rho > 0$ such that for all $(\mu, \lambda) \in \sigma (\Delta_T) \times \sigma(\Delta_G)$ with $\mu + \lambda \geq R$ we have
  \begin{align}\label{eq:mostlyconstant}
    \left( \sum_{\ell = 1}^{N'} \| \vv{Y}_\ell \varphi \|_{L^2(T \times G)}^2 \right)^\frac{1}{2} &\geq C (1 + \mu + \lambda)^{-\rho} \| \varphi\|_{L^2(T \times G)} , \quad \forall \varphi \in E_\mu^T \otimes E_\lambda^G.
  \end{align}
  
  Let $u \in \D'(T \times G)$ be such that $f \dfn Pu \in \cinfty(G)$. Since we are assuming $\tilde{P}$ elliptic in $T$ we have by Corollary~\ref{cor:ulambdasmooth} that $\mathcal{F}_\lambda^G(u)$ is smooth for every $\lambda \in \sigma(\Delta_G)$. As $\vv{Y}_1, \ldots, \vv{Y}_{N'}$ commute with $\Delta$ they behave well under both the partial Fourier projection maps i.e.~including $\mathcal{F}^T$, and not only $\mathcal{F}^G$: 
  \begin{align}
    \left \| \vv{Y}_\ell \mathcal{F}_\lambda^G(u) \right \|_{L^2(T \times G)}^2 = \sum_{\mu \in \sigma(\Delta_T)} \left \|  \mathcal{F}^T_\mu \left( \vv{Y}_\ell \mathcal{F}_\lambda^G(u) \right) \right \|_{L^2(T \times G)}^2 = \sum_{\mu \in \sigma(\Delta_T)} \left \|  \vv{Y}_\ell \left(  \mathcal{F}^T_\mu \mathcal{F}_\lambda^G(u) \right) \right \|_{L^2(T \times G)}^2 \label{eq:relellpcommute}
  \end{align}
  for $\ell \in \{1, \ldots, N'\}$, whatever $\lambda \in \sigma(\Delta_G)$.
  
  Now let $s > 0$. By Corollary~\ref{cor:est_small_cone} there exist $C_1 > 0$ and $\theta \in (0,1)$ such that
  \begin{align*}
    \| \mathcal{F}^T_\mu \mathcal{F}^G_\lambda (u)\|_{L^2(T \times G)} &\leq C_1 (1 + \mu + \lambda)^{-s - 2n}, \quad \forall (\mu, \lambda) \in \Lambda_\theta,
  \end{align*}
  where $n = \dim T$ and $\Lambda_\theta \sset \sigma(\Delta_T) \times \sigma(\Delta_G)$ is defined as in~\eqref{eq:Atheta}. We look at its complement
  \begin{align*}
    \Lambda_\theta^c &= \{ (\mu, \lambda) \in \sigma(\Delta_T) \times \sigma(\Delta_G) \st (1 + \lambda) > (1 + \mu)^\theta \}
  \end{align*}
  where it holds that
  \begin{align*}
    1 + \mu + \lambda < (1 + \lambda)^{\frac{1}{\theta}} + \lambda \leq (1 + \lambda)^{1 +  \frac{1}{\theta}} \leq (1 + \lambda)^{\frac{2}{\theta}}
  \end{align*}
  since $1/\theta > 1$. Therefore, thanks to \eqref{eq:mostlyconstant}, we have, for $(\mu, \lambda) \in \Lambda_\theta^c$ with $\mu + \lambda \geq R,$ that
  \begin{align*}
    \| \mathcal{F}^T_\mu \mathcal{F}^G_\lambda (u)\|_{L^2(T \times G)}^2 &\leq C^{-2} (1 + \mu + \lambda)^{2 \rho} \sum_{\ell = 1}^{N'} \left \|  \vv{Y}_\ell \left(  \mathcal{F}^T_\mu \mathcal{F}_\lambda^G(u) \right) \right \|_{L^2(T \times G)}^2 \\
    &\leq C^{-2} (1 + \lambda)^{\frac{4 \rho}{\theta}} \sum_{\ell = 1}^{N'} \left \|  \vv{Y}_\ell \left(  \mathcal{F}^T_\mu \mathcal{F}_\lambda^G(u) \right) \right \|_{L^2(T \times G)}^2.
  \end{align*}
 
  Fixing $\lambda \in \sigma(\Delta_G)$ we have by Remark~\ref{rem:equivL2lambda} that
  \begin{align}
    \| \mathcal{F}^G_\lambda (u)\|_{L^2(T \times G)}^2 &= \sum_{\substack{\mu \in \sigma(\Delta_T) \\ (\mu, \lambda) \in \Lambda_\theta}} \| \mathcal{F}^T_\mu \mathcal{F}^G_\lambda (u)\|_{L^2(T \times G)}^2 + \sum_{\substack{\mu \in \sigma(\Delta_T) \\ (\mu, \lambda) \in \Lambda_\theta^c}} \| \mathcal{F}^T_\mu \mathcal{F}^G_\lambda (u)\|_{L^2(T \times G)}^2 \label{eq:mostlypart1}
  \end{align}
  in which the first sum can be bounded by
  \begin{align}
    \sum_{\substack{\mu \in \sigma(\Delta_T) \\ (\mu, \lambda) \in \Lambda_\theta}} \| \mathcal{F}^T_\mu \mathcal{F}^G_\lambda (u)\|_{L^2(T \times G)}^2 &\leq \sum_{\substack{\mu \in \sigma(\Delta_T) \\ (\mu, \lambda) \in \Lambda_\theta}} \| \mathcal{F}^T_\mu \mathcal{F}^G_\lambda (u)\|_{L^2(T \times G)}\frac{C_1}{(1+\mu+ \lambda)^{s+2n}}\nonumber\\
    &\leq  \frac{C_1}{(1+ \lambda)^{s}} \sum_{\substack{\mu \in \sigma(\Delta_T) \\ (\mu, \lambda) \in \Lambda_\theta}} \frac{\| \mathcal{F}^T_\mu \mathcal{F}^G_\lambda (u)\|_{L^2(T \times G)}}{  (1 + \mu)^{2n}}\nonumber\\
    &\leq  \frac{C_1}{(1+ \lambda)^{s}} \| \mathcal{F}^G_\lambda (u)\|_{L^2(T \times G)}  \sum_{\mu \in \sigma(\Delta_T)} \frac{1}{(1 + \mu)^{2n}}\label{eq:mostlypart2},
  \end{align}
 where the latter series converges by Weyl's asymptotic formula~\eqref{eq:weyl}.

 For the second sum in~\eqref{eq:mostlypart1} we define $\Lambda_{\theta, R}^{c} \dfn \{ (\mu,\lambda) \in \Lambda_{\theta}^{c} \st \mu+ \lambda\geq R\}$: it follows that 
 \begin{align*}
   \sum_{\substack{\mu \in \sigma(\Delta_T) \\ (\mu, \lambda) \in \Lambda_{\theta,R}^c}} \| \mathcal{F}^T_\mu \mathcal{F}^G_\lambda (u)\|_{L^2(T \times G)}^2\leq  C_2 (1 + \lambda)^{\frac{4 \rho}{\theta}} \sum_{\ell = 1}^{N'} \sum_{\substack{\mu \in \sigma(\Delta_T) \\ (\mu, \lambda) \in \Lambda_{\theta,R}^c}} \left \|  \vv{Y}_\ell \left(  \mathcal{F}^T_\mu \mathcal{F}_\lambda^G(u) \right) \right \|_{L^2(T \times G)}^2
 \end{align*}
 which can be further bounded by
  \begin{align*}
    \sum_{\ell = 1}^{N'} \sum_{\substack{\mu \in \sigma(\Delta_T) \\ (\mu, \lambda) \in \Lambda_{\theta,R}^c}} \left \|  \vv{Y}_\ell \left(  \mathcal{F}^T_\mu \mathcal{F}_\lambda^G(u) \right) \right \|_{L^2(T \times G)}^2 &\leq \sum_{\ell = 1}^{N'} \sum_{\mu \in\sigma(\Delta_T)} \left \|  \vv{Y}_\ell \left(  \mathcal{F}^T_\mu \mathcal{F}_\lambda^G(u) \right) \right \|_{L^2(T \times G)}^2 \\
    &= \sum_{\ell = 1}^{N'} \left \| \vv{Y}_\ell \mathcal{F}_\lambda^G(u) \right \|_{L^2(T \times G)}^2 \\
    &\leq \left \langle Q^\sharp \mathcal{F}_\lambda^G(u), \mathcal{F}_\lambda^G(u) \right \rangle_{L^2(T \times G)} + \sum_{\ell = 1}^{N'} \left \| \vv{Y}_\ell \mathcal{F}_\lambda^G(u) \right \|_{L^2(T \times G)}^2 \\
    &\leq \left \langle \mathcal{F}_\lambda^G(f) , \mathcal{F}_\lambda^G(u) \right \rangle_{L^2(T \times G)} \\
    &\leq \| \mathcal{F}^G_\lambda (f)\|_{L^2(T \times G)} \| \mathcal{F}^G_\lambda (u)\|_{L^2(T \times G)}
  \end{align*}
  where we used Proposition~\ref{prop:invopscommft}, Lemma~\ref{lem:energy_identity} and the fact that $Q$ is positive semidefinite. But since $f$ is smooth Corollary~\ref{cor:partial_smoothness} asserts the existence of a constant $C_3 > 0$ such that
  \begin{align*}
    \| \mathcal{F}^G_\lambda (f)\|_{L^2(T \times G)} &\leq C_3 (1 + \lambda)^{ -s - \frac{4 \rho}{\theta}}, \quad \forall \lambda \in \sigma(\Delta_G),
  \end{align*}
  from which we conclude that there exists $C_4 > 0$ such that
  \begin{align}\label{eq:mostlypart3}
       \sum_{\substack{\mu \in \sigma(\Delta_T) \\ (\mu, \lambda) \in \Lambda_{\theta,R}^c}} \| \mathcal{F}^T_\mu \mathcal{F}^G_\lambda (u)\|_{L^2(T \times G)}^2 &\leq C_4 (1 + \lambda)^{-s} \| \mathcal{F}^G_\lambda (u)\|_{L^2(T \times G)}.
  \end{align}
  Using the fact that $\Lambda_{\theta}^{c}\setminus\Lambda_{\theta, R}^{c}$ is finite, it follows from~\eqref{eq:mostlypart1}, \eqref{eq:mostlypart2} and~\eqref{eq:mostlypart3} that 
  \begin{align*}
    \|\mathcal{F}_{\lambda}^{G}(u)\|_{L^{2}(T\times G)} \leq C_5(1+ \lambda)^{-s}, \quad \forall \lambda \in \sigma(\Delta_G),
  \end{align*}
  for some constant $C_5>0$, and the smoothness of $u$ follows from Corollary~\ref{cor:partial_smoothness_converse}.
\end{proof}

The next one is very similar and generalizes~\cite[Theorem~2]{albanese11}.
\begin{Thm} \label{thm:albanese}  Let $P$ in~\eqref{eq:Pdef} be of the form
  \begin{align*}
    P &= Q^\sharp - \sum_{\ell = 1}^{N'} \left( \vv{L}_\ell^\sharp \right)^2 - \sum_{\ell = N' + 1}^{N} \left( \gr{a}_\ell(t, \vv{X}) + \vv{W}_\ell^\sharp \right)^2
  \end{align*}
  where $Q$ is positive semidefinite, $\vv{L}_1, \ldots, \vv{L}_{N'} \in \gr{g}$ and $\vv{W}_{N' + 1}, \ldots, \vv{W}_N$ are skew-symmetric and such that $\tilde{P}=Q- \vv{W}_{N'+1}^{2}-\cdots - \vv{W}_N^{2}$ is elliptic. Assume moreover that the system $\{ \vv{L}_1, \ldots, \vv{L}_{N'}\}$ is~$\mathrm{(GH)}$ in $G$. Then $P$ is~$\mathrm{(GH)}$ in $T \times G$.
  \begin{proof} By Proposition~\ref{prop:ghfinitevfs} there exist $C, \rho > 0$ and $\lambda_0 \in \sigma(\Delta_G)$ such that 
    \begin{align*}
      \left( \sum_{\ell = 1}^{N'} \| \vv{L}_\ell \phi \|_{L^2(G)}^2 \right)^\frac{1}{2} &\geq C (1 + \lambda)^{-\rho} \| \phi\|_{L^2(G)} , \quad \forall \phi \in E_\lambda^G
    \end{align*}
    for all $\lambda \in \sigma(\Delta_G)$ with $\lambda \geq \lambda_0$. In particular for arbitrary $\psi \in \cinfty(T)$ and $\phi \in E_\lambda^G$ we have 
    \begin{align*}
      \sum_{\ell = 1}^{N'} \| \vv{L}_\ell^{\sharp} (\psi \otimes \phi) \|_{L^2(T \times G)}^2 &= \sum_{\ell = 1}^{N'} \| \psi \otimes (\vv{L}_\ell \phi) \|_{L^2(T \times G)}^2  \\
      &= \sum_{\ell = 1}^{N'} \| \psi \|_{L^2(T)}^2 \| \vv{L}_\ell \phi \|_{L^2(G)}^2 \\
      &\geq C^2 (1 + \lambda)^{-2\rho} \| \psi \|_{L^2(T)}^2 \| \phi\|_{L^2(G)}^2 \\
      &= C^2 (1 + \lambda)^{-2\rho} \| \psi \otimes \phi \|_{L^2(T \times G)}^2.
    \end{align*}
    Now if we select, as usual, orthonormal bases $\psi_1^\mu, \ldots, \psi_{d^T_\mu}^\mu$ and $\phi_1^\lambda, \ldots, \phi_{d^G_\lambda}^\lambda$ of $E_\mu^T$ and $E_\lambda^G$ respectively, we may write any $\varphi \in E_\mu^T \otimes E_\lambda^G$ as
    \begin{align*}
      \varphi &= \sum_{i = 1}^{d_\lambda^G} \sum_{j = 1}^{d_\mu^T} \varphi_{ij} \psi_j^\mu \otimes \phi_i^\lambda, \quad \varphi_{ij} \in \C,
    \end{align*}
    or, alternatively,
    \begin{align*}
      \varphi &= \sum_{j = 1}^{d_\mu^T} \psi_j^\mu \otimes \tilde{\varphi}_j , \quad \tilde{\varphi}_j \in E_\lambda^G.
    \end{align*}
    Notice that given $j, j' \in \{1, \ldots, d_\mu^T\}$ we have
    \begin{align*}
      \left \langle \psi_j^\mu \otimes \tilde{\varphi}_j, \psi_{j'}^\mu \otimes \tilde{\varphi}_{j'} \right \rangle_{L^2(T \times G)} = \left \langle \psi_j^\mu , \psi_{j'}^\mu \right \rangle_{L^2(T)} \left \langle \tilde{\varphi}_j, \tilde{\varphi}_{j'} \right \rangle_{L^2(G)} = \delta_{j j'} \left \langle \tilde{\varphi}_j, \tilde{\varphi}_{j'} \right \rangle_{L^2(G)}
    \end{align*}
    hence the terms in the last sum are pairwise orthogonal, and thus
    \begin{align*}
      \| \varphi \|_{L^2(T \times G)}^2 &= \sum_{j = 1}^{d_\mu^T} \left \| \psi_j^\mu \otimes \tilde{\varphi}_j \right \|_{L^2(T \times G)}^2 .
    \end{align*}
    Moreover
    \begin{align*}
      \vv{L}^{\sharp}_\ell \varphi = \sum_{j = 1}^{d_\mu^T} \vv{L}^{\sharp}_\ell \left( \psi_j^\mu \otimes \tilde{\varphi}_j \right) = \sum_{j = 1}^{d_\mu^T} \psi_j^\mu \otimes (\vv{L}_\ell \tilde{\varphi}_j)
    \end{align*}
    so by the previous argument we have that all the terms above are orthogonal in $L^2(T \times G)$, hence
    \begin{align*}
      \| \vv{L}^{\sharp}_\ell \varphi \|_{L^2(T \times G)}^2 &= \sum_{j = 1}^{d_\mu^T} \left \| \vv{L}^{\sharp}_\ell \left( \psi_j^\mu \otimes \tilde{\varphi}_j \right) \right \|_{L^2(T \times G)}^2 .
    \end{align*}
    If $\lambda \geq \lambda_0$ then
    \begin{align*}
      \sum_{\ell = 1}^{N'} \| \vv{L}^{\sharp}_\ell \varphi \|_{L^2(T \times G)}^2 &= \sum_{j = 1}^{d_\mu^T} \sum_{\ell = 1}^{N'} \left \| \vv{L}^{\sharp}_\ell \left( \psi_j^\mu \otimes \tilde{\varphi}_j \right) \right \|_{L^2(T \times G)}^2 \\
      &\geq \sum_{j = 1}^{d_\mu^T} C^2 (1 + \lambda)^{-2\rho} \left \| \psi_j^\mu \otimes \tilde{\varphi}_j \right \|_{L^2(T \times G)}^2 \\
      &= C^2 (1 + \lambda)^{-2\rho} \| \varphi \|_{L^2(T \times G)}^2.
    \end{align*}
    We conclude that for every $(\mu, \lambda) \in \sigma(\Delta_T) \times \sigma(\Delta_G)$ with $\lambda \geq \lambda_0$ we have
    \begin{align}
      \sum_{\ell = 1}^{N'} \| \vv{L}^{\sharp}_\ell \varphi \|_{L^2(T \times G)}^2 &\geq C^2 (1 + \lambda)^{-2\rho} \| \varphi \|_{L^2(T \times G)}^2, \quad \forall \varphi \in E_\mu^T \otimes E_\lambda^G. \label{eq:ineq_full_tensors} 
    \end{align}
    
    Let $u \in \D'(T \times G)$ be such that $f \dfn Pu \in \cinfty(G)$, so again $\mathcal{F}_\lambda^G(u) \in \cinfty(T; E_\lambda^G)$ for every $\lambda \in \sigma(\Delta_G)$. For each $\ell \in \{1, \ldots, N'\}$, since $\vv{L}_\ell$ is a left-invariant vector field on $G$, and as such commutes with $\Delta_G$, we have that $\vv{Y}_\ell \dfn \vv{L}_\ell^\sharp$ commutes with $\Delta$, so again~\eqref{eq:relellpcommute} holds. Therefore for $(\mu, \lambda) \in \sigma(\Delta_T) \times \sigma(\Delta_G)$ with $\lambda \geq \lambda_0$ we have, by~\eqref{eq:ineq_full_tensors} and~\eqref{eq:relellpcommute},
    \begin{align*}
      \| \mathcal{F}^T_\mu \mathcal{F}^G_\lambda (u)\|_{L^2(T \times G)}^2 \leq C^{-2} (1 + \lambda)^{2 \rho} \sum_{\ell = 1}^{N'} \left \|  \mathcal{F}^T_\mu \left( \vv{L}^{\sharp}_\ell \mathcal{F}_\lambda^G(u) \right) \right \|_{L^2(T \times G)}^2 
    \end{align*}
    so summing both sides over $\mu \in \sigma(\Delta_T)$ yields
    \begin{align*}
      \| \mathcal{F}^G_\lambda (u)\|_{L^2(T \times G)}^2 &\leq C^{-2} (1 + \lambda)^{2 \rho} \sum_{\ell = 1}^{N'} \left \|  \vv{L}^{\sharp}_\ell\mathcal{F}_\lambda^G(u) \right \|_{L^2(T \times G)}^2 \\
      &\leq C^{-2} (1 + \lambda)^{2 \rho} \| \mathcal{F}^G_\lambda (f)\|_{L^2(T \times G)} \| \mathcal{F}^G_\lambda (u)\|_{L^2(T \times G)}
    \end{align*}
    for every $\lambda \geq \lambda_0$, where we proceed as in the previous theorem; as such, we conclude smoothness of $u$, keeping in mind the finiteness of the set $\{ \lambda \in \sigma(\Delta_G) \st \lambda < \lambda_0 \}$.
  \end{proof}
\end{Thm}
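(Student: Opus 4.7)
My plan is to mimic the strategy of Theorem~\ref{thm:thm19} but exploit the fact that the ``pure'' left-invariant fields $\vv{L}_1^\sharp, \ldots, \vv{L}_{N'}^\sharp$ here act only on the $G$ factor, which allows the spectral inequality on $G$ to be lifted to $T \times G$ without the fine partitioning of $\sigma(\Delta_T) \times \sigma(\Delta_G)$ into cones. First, because $\tilde P$ is elliptic and $Pu = f \in \cinfty(T \times G)$, Corollary~\ref{cor:ulambdasmooth} gives $\mathcal{F}_\lambda^G(u) \in \cinfty(T; E_\lambda^G)$ for every $\lambda \in \sigma(\Delta_G)$. Since each $\vv{L}_\ell$ is left-invariant on $G$, it commutes with $\Delta_G$, hence $\vv{L}_\ell^\sharp$ commutes with $\Delta_G^\sharp$ and Proposition~\ref{prop:invopscommft} ensures $\vv{L}_\ell^\sharp$ commutes with the partial Fourier map $\mathcal{F}_\lambda^G$.

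Next, I would transfer the hypothesis that $\{\vv{L}_1, \ldots, \vv{L}_{N'}\}$ is (GH) in $G$ to a quantitative inequality via Proposition~\ref{prop:ghfinitevfs}: there exist $C, \rho > 0$ and $\lambda_0 \in \sigma(\Delta_G)$ such that
\begin{align*}
  \sum_{\ell=1}^{N'} \| \vv{L}_\ell \phi \|_{L^2(G)}^2 \geq C^2 (1 + \lambda)^{-2\rho} \| \phi \|_{L^2(G)}^2, \quad \forall \phi \in E_\lambda^G,\ \lambda \geq \lambda_0.
\end{align*}
The lift to $T \times G$ is very clean since for $\psi \in \cinfty(T)$ and $\phi \in E_\lambda^G$ one has $\vv{L}_\ell^\sharp(\psi \otimes \phi) = \psi \otimes (\vv{L}_\ell \phi)$, so by Fubini $\| \vv{L}_\ell^\sharp(\psi \otimes \phi) \|_{L^2(T \times G)}^2 = \| \psi \|_{L^2(T)}^2 \| \vv{L}_\ell \phi \|_{L^2(G)}^2$. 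Decomposing an arbitrary $\varphi \in \cinfty(T; E_\lambda^G)$ with respect to an orthonormal basis of $E_\lambda^G$ (writing $\varphi = \sum_i \varphi_i \otimes \phi_i^\lambda$ with $\varphi_i \in \cinfty(T)$) and applying the inequality term-by-term yields
\begin{align*}
  \sum_{\ell=1}^{N'} \| \vv{L}_\ell^\sharp \varphi \|_{L^2(T \times G)}^2 \geq C^2 (1 + \lambda)^{-2\rho} \| \varphi \|_{L^2(T \times G)}^2, \quad \forall \varphi \in \cinfty(T; E_\lambda^G),\ \lambda \geq \lambda_0.
\end{align*}

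The third step is an energy estimate. Applying Lemma~\ref{lem:energy_identity} to $P$ (with $\vv{Y}_\ell = \vv{L}_\ell^\sharp$ for $\ell \leq N'$ and $\vv{Y}_\ell = \gr{a}_\ell(t,\vv{X}) + \vv{W}_\ell^\sharp$ for $\ell > N'$), and using positive semidefiniteness of $Q^\sharp$ and non-negativity of the remaining $\|\vv{Y}_\ell \varphi\|^2$ terms, I obtain
\begin{align*}
  \sum_{\ell=1}^{N'} \| \vv{L}_\ell^\sharp \mathcal{F}_\lambda^G(u) \|_{L^2(T \times G)}^2 \leq \left\langle P \mathcal{F}_\lambda^G(u), \mathcal{F}_\lambda^G(u) \right\rangle_{L^2(T \times G)} = \left\langle \mathcal{F}_\lambda^G(f), \mathcal{F}_\lambda^G(u) \right\rangle_{L^2(T \times G)}
\end{align*}
by Proposition~\ref{prop:invopscommft} applied to $P$ (which commutes with $\Delta_G^\sharp$, as noted after~\eqref{eq:Pontensors}). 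Combining with the previous inequality and Cauchy--Schwarz:
\begin{align*}
  \| \mathcal{F}_\lambda^G(u) \|_{L^2(T \times G)} \leq C^{-2} (1 + \lambda)^{2\rho} \| \mathcal{F}_\lambda^G(f) \|_{L^2(T \times G)}, \quad \forall \lambda \geq \lambda_0.
\end{align*}

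Finally, since $f \in \cinfty(T \times G)$, Corollary~\ref{cor:partial_smoothness} says $\| \mathcal{F}_\lambda^G(f) \|_{L^2(T \times G)}$ decays faster than any polynomial in $(1 + \lambda)^{-1}$, hence so does $\| \mathcal{F}_\lambda^G(u) \|_{L^2(T \times G)}$ (absorbing the finitely many $\lambda < \lambda_0$ into the constant). On the other hand, ellipticity of $\tilde P$ together with Corollary~\ref{cor:est_small_cone} furnishes, for every $s > 0$, constants $C', \theta > 0$ with $\| \mathcal{F}_\mu^T \mathcal{F}_\lambda^G(u) \|_{L^2(T \times G)} \leq C'(1 + \mu + \lambda)^{-s}$ on the cone $\Lambda_\theta$. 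Smoothness of $u$ then follows by applying Corollary~\ref{cor:partial_smoothness_converse}. The only delicate step I expect is the bookkeeping in the lift of the spectral inequality from $G$ to $T \times G$, but it is essentially trivial here because $\vv{L}_\ell^\sharp$ acts only on the $G$-factor; this is precisely what makes the hypotheses of this theorem weaker than those of Theorem~\ref{thm:thm19}.
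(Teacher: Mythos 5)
Your proposal is correct and follows essentially the same path as the paper's proof: Proposition~\ref{prop:ghfinitevfs} to quantify the global hypoellipticity of $\{\vv{L}_1,\ldots,\vv{L}_{N'}\}$ on $G$, a lift of that inequality to $T\times G$, the energy identity of Lemma~\ref{lem:energy_identity} together with positive semidefiniteness of $Q$ and Proposition~\ref{prop:invopscommft}, Cauchy--Schwarz, and then Corollaries~\ref{cor:partial_smoothness}, \ref{cor:est_small_cone} and~\ref{cor:partial_smoothness_converse}. The one place where you diverge is the lifting step, and there your justification is loose: decomposing $\varphi=\sum_i\varphi_i\otimes\phi_i^\lambda$ in an orthonormal basis of $E_\lambda^G$ and ``applying the inequality term-by-term'' is not literally valid, since after applying $\vv{L}_\ell^\sharp$ the terms $\varphi_i\otimes(\vv{L}_\ell\phi_i^\lambda)$ are in general no longer pairwise orthogonal (neither the $\varphi_i$ in $L^2(T)$ nor the $\vv{L}_\ell\phi_i^\lambda$ in $L^2(G)$ need be orthogonal), so $\|\vv{L}_\ell^\sharp\varphi\|_{L^2(T\times G)}^2$ is not the sum of the squared norms of those terms. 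The inequality you want does hold on all of $\cinfty(T;E_\lambda^G)$, and follows at once by freezing $t\in T$, applying the $G$-inequality to $\varphi(t,\cdot)\in E_\lambda^G$, and integrating over $T$ (Proposition~\ref{prop:fubini}); with that repair your route is in fact a small streamlining of the paper's, which proves the estimate only on each finite-dimensional $E_\mu^T\otimes E_\lambda^G$ -- decomposing along the $T$-basis $\psi_j^\mu$, where orthogonality survives the action of $\vv{L}_\ell^\sharp$, yielding~\eqref{eq:ineq_full_tensors} -- and then recovers the bound for $\mathcal{F}_\lambda^G(u)$ by summing over $\mu$ via the commutation relation~\eqref{eq:relellpcommute}. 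Your direct lift makes that summation, and the explicit appeal to~\eqref{eq:relellpcommute}, unnecessary, while the remainder of your argument (smoothness of $\mathcal{F}_\lambda^G(u)$ from Corollary~\ref{cor:ulambdasmooth}, rapid decay of $\|\mathcal{F}_\lambda^G(u)\|$ from that of $\|\mathcal{F}_\lambda^G(f)\|$, and the cone estimate plus Corollary~\ref{cor:partial_smoothness_converse}) coincides with the paper's.
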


\bibliographystyle{plain}
\bibliography{bibliography}
\end{document}